\newtheorem{thm}{Theorem}[section]
\newtheorem{lem}[thm]{Lemma}
\newtheorem*{lem*}{Lemma}
\newtheorem{rmk}[thm]{Remark}
\newtheorem{cor}[thm]{Corollary}
\newtheorem{defn}[thm]{Definition}
\newtheorem{myclaim}[thm]{Claim}
\newtheorem{prop}[thm]{Proposition}
\newtheorem*{conv}{Convention}
\newcommand{\N}{\mathbb{N}}
\newcommand{\R}{\mathbb{R}}
\newcommand{\Z}{\mathbb{Z}}
\newcommand{\T}{\mathbb{T}}
\newcommand{\mcl}{\mathcal L}
\newcommand{\bbp}{\mathbb P}
\newcommand{\E}{\mathbb E}
\newcommand{\al}{\alpha}
\newcommand{\be}{\beta}
\newcommand{\ga}{\gamma}
\newcommand{\Ga}{\Gamma}
\newcommand{\del}{\delta}
\newcommand{\ep}{\epsilon}
\newcommand{\sig}{\sigma}
\newcommand{\ka}{\kappa}
\newcommand{\lam}{\lambda}
\newcommand{\Lam}{\Lambda}
\newcommand{\Om}{\Omega}
\newcommand{\om}{\omega}
\newcommand{\tld}[1]{\tilde{#1}}
\newcommand{\mc}[1]{\mathcal{#1}}
\newcommand{\leb}{\text{Leb}}
\DeclareMathOperator{\essinf}{ess\ inf}
\DeclareMathOperator{\esssup}{ess\ sup}
\DeclareMathOperator{\var}{var}
\newcommand{\spt}{\mathop{\mathrm{supp}}}
\newcommand{\hpt}{{\mc{H}_p^t}}
\newcommand{\hptt}{{\mc{H}_p^{t'}}}
\newcommand{\hptorus}{{{H}_p^t(\T)}}
\newcommand{\paeom}{\bbp\text{-a.e. } \om \in \Om}
\newcommand{\LY}{\text{LY}}
\newcommand{\dist}{\mc{D}} 
\newcommand{\dom}{I} 
\newcommand{\nint}{\be} 
\title{Stability and approximation of random invariant densities for Lasota-Yorke map cocycles}
\author{Gary Froyland, Cecilia Gonz\'alez-Tokman and Anthony Quas}
\address[Froyland and Gonz\'alez-Tokman]{School of Mathematics and Statistics,
University of New South Wales, Sydney, NSW, 2052, Australia}
\address[Quas]{Department of Mathematics and Statistics, University of Victoria, Victoria, BC, CANADA, V8W 3R4}
\begin{document}

\begin{abstract}
We establish stability of random absolutely continuous invariant measures (acims) for cocycles of random Lasota-Yorke maps  under a variety of perturbations.
Our family of random maps need not be close to a fixed map; thus, our results can handle very general driving mechanisms.
We consider (i) perturbations via convolutions, (ii) perturbations arising from finite-rank transfer operator approximation schemes and (iii) static perturbations, perturbing to a nearby cocycle of Lasota-Yorke maps.
The former two results provide a rigorous framework for the numerical approximation of random acims using a Fourier-based approach and Ulam's method, respectively;  we also demonstrate the efficacy of these schemes.
\end{abstract}

\maketitle

\section{Introduction}

Random (or forced) dynamical systems are invaluable models of systems
exhibiting time dependence. Even though, for us, the terms random and
forced are interchangeable, we use exclusively the term random dynamical
systems (rds). These systems arise naturally in situations with
time-dependent forcing, as well as being a natural model for systems in
which some neglected or ill-understood phenomena lead to uncertainty
in the evolution. 
One particular motivation concerns transport phenomena, such as oceanic and atmospheric flow. 
Although randomness appears in the title, this work covers a variety
of situations, ranging from deterministic forcing-- for example when the driving depends
only on the time of the day-- to independent
identically distributed noise. The results of this paper deal with very
general driving systems: the conditions on the base dynamics are that it
should be stationary (i.e. have an invariant probability measure), ergodic
and invertible; in particular, no mixing properties are needed.

The predecessor works \cite{FLQ1,FLQ2,GTQuas} provide a unified framework for the study of random absolutely continuous invariant measures, exponential decay of correlations and coherent structures in random dynamical systems.
The abstract results therein, called semi-invertible Oseledets theorems, show a dynamically meaningful way of \textit{splitting} Banach spaces arising from the study of associated transfer operators into subspaces with specific growth rates.
Such results have applications in the context of random, non-autonomous or time-dependent systems, provided there are some good statistics for the time-dependence and
a \textit{quasi-compactness} type property holds.
The references above provide explicit applications in the setting of random compositions of piecewise smooth expanding maps.

Having demonstrated the existence of a splitting, it is natural to ask about its stability under different types of perturbations.
This is known to be a very difficult problem in general, even in finite dimensions, where positive results \cite{young86,LedrappierYoung} rely on absolute continuity and uniformity of the perturbations.
In the infinite-dimensional case, research has focused on transfer operators, and Perron-Frobenius operators in particular.
Considering a single map $T$ with expanding or hyperbolic properties, the transfer operator is quasi-compact in the right Banach space setup, and one can ask about the stability of eigenprojections of the transfer operator with respect to perturbations.
As the transfer operators typically preserve a non-negative cone, by the Ruelle-Perron-Frobenius theorem, there is an eigenvalue of largest magnitude, which is positive.
In the Perron-Frobenius case, this eigenvalue is 1, and the corresponding eigenprojection(s) represent invariant densities of the generating map $T$.

Numerical methods for approximating invariant densities rely on stability of the density under particular perturbations;  those induced by the numerical method.
A very common perturbation is Ulam's method, a relatively crude, but in practice extremely effective, approach.
Positive stability results in a variety of settings include \cite{BaladiIsolaSchmitt,FroylandSRB,DingZhou,BlankKeller97,MurrayThesis,KeaneMurrayYoung,Froylandrandom,Murraynonuniform}. A mechanism causing instability is described in \cite{Keller82}.
Ulam's method can also be used to estimate other non-essential spectral values \cite{FroylandCMP,BlankKeller98,BaladiHolschneider,FroylandUlamApprox}.
Stability under convolution-type perturbations is treated in \cite{BaladiYoung,AlvesVilarinho}, and \cite{BlankKellerLiverani,DemersLiverani} consider static perturbations, as well as of convolution-type.
A seminal paper in this area is \cite{KellerLiverani99}, which provides a rather general template for stability results for single maps.

Despite the considerable volume of results in the autonomous setting, only a few results are known about stability of Oseledets splittings in the non-autonomous situation
\cite{BaladiKondahSchmitt,BaladiQuenched,Bogenschutz}.
Each of these results concerns stability of Oseledets splittings for small random perturbations of a \emph{fixed} expanding map;  thus these results concern stability of \emph{non-random} eigenprojections of a fixed unperturbed transfer operator.

In contrast, we begin with a random dynamical system that possesses a (random) splitting, and demonstrate stability of this random splitting under perturbations.
Our techniques handle convolution-type perturbations (the random map experiences integrated noise), static perturbations  (the random map is perturbed to another random map), and finite-rank perturbations (stability under numerical schemes).
This paper deals with stability of the \textit{top} space of the splitting. In particular,
our results answer a question raised by Buzzi in \cite{BuzziEDC} about stability of random acims for Lasota-Yorke maps. Stability under other types of perturbations relevant for applications and numerical studies, such as Ulam and Fourier-based schemes are also be treated with our method.

The approach we take is modeled on results of Keller and Liverani \cite{KellerLiverani99} and adapted to the random setting.
We point out that although their results may be
applied directly to some random perturbations of a single system, they yield
information about expectation of the random process only. In contrast,
ours yields information about almost all possible realizations.

Baladi and Gou\"ezel \cite{BaladiGouezel} introduced a relevant functional analytic setup for quasi-compactness of single transfer operators.
This followed other setups \cite{BlankKellerLiverani,GouezelLiverani,BaladiTsujii,DemersLiverani}.
In the present work we rely on the constructions from \cite{BaladiGouezel} because the results of \cite{GTQuas} allow one to show the existence of Oseledets splittings in that setting, a fact that is heavily exploited in our approach.

\subsection{Statement of the main results}
A random dynamical system consists of base dynamics (a measure-preserving map $\sigma$ of a probability space $\Omega$) and a family of
linear maps $\mathcal L_\omega$ from a Banach space $X$ to itself (in our applications these are the Perron-Frobenius operators of piecewise
expanding maps, $T_\omega$, of the circle). The results address stability of the dominant Oseledets subspace of the random dynamical system
when the linear maps are perturbed (leaving the base dynamics unchanged). We consider three classes of perturbation:
\begin{enumerate}[(A)]
\item \textit{Ulam-type perturbation.} For a fixed $k$, we define perturbed operators $\mathcal L_{k,\omega}$ to be $\mathbb E_k\circ \mathcal L_\omega$,
where $\mathbb E_k$ is the conditional expectation operator with respect to the partition into intervals of length $1/k$.

\item
 \textit{Convolution-type perturbation}. Given a family of densities $(Q_k)$ on the circle, we define perturbed operators $\mathcal L_{k,\omega}$ by
$\mathcal L_{k,\omega}f=Q_k * \mathcal L_\omega f$. If one applies $T_\omega$ and then adds a noise term with distribution given by $Q_k$,
then $\mathcal L_{k,\omega}$ is the random Perron-Frobenius operator. That is, the expectation of the Perron-Frobenius operators of
$\tau_y\circ T_\omega$ where $y$ has density $Q_k$ and $\tau_y$ is translation by $y$. Notable examples of perturbations of this type are the
cases where $Q_k$ is uniformly distributed on an interval $[-\epsilon_k,\epsilon_k]$ or where $Q_k$ is the $k$th Fej\'er kernel.

\item
\textit{Static perturbation}. Here one replaces the entire family of transformations $T_\omega$ by nearby transformations $T_{k,\omega}$. These are
much more delicate than the other two types of perturbation (composing with convolutions and conditional expectations generally make
operators more benign). Notice that by enlarging the probability space, perturbations of this type can include transformations with (for
example) independent identically distributed additive noise. To see this, let $\Xi$ denote the space of sequences taking values in $[-1,1]$,
equipped with the product of uniform measures and let $\bar\Omega=\Omega\times\Xi$ and $\bar\sigma$ be the product of $\sigma$ on the
$\Omega$ coordinate and the shift on the $\Xi$ coordinate. Then defining $T_{k,(\omega,\xi)}(x)=T_\omega(x)+\xi/k$ gives a family of perturbed
maps (with the common base dynamics being $\bar\Omega$). The unperturbed dynamics $(T_\omega)$ can, of course, also be seen as being driven
by $\bar\Omega$. Notice that this is \emph{not} the same thing as the perturbation obtained by convolving with a uniform $Q$. In the static
case, the results obtained would give a result that holds for compositions of $\mcl_{\omega,\xi}$ for almost every $\omega$ and almost every
sequence of perturbations $\xi$, whereas a result for the convolution perturbation would give a result that holds for the expectation of
these operators obtained by integrating over the $\xi$ variables. The convolution type perturbations are also known in the physics literature
as annealed systems, while the static perturbations are quenched systems.
\end{enumerate}

Below we outline the main application results of this paper. We refer the reader to \S\ref{sec:Ex} for
definitions, and to
 Theorems \ref{thm:Ulam}, \ref{thm:pertConv} and \ref{thm:NstepStab} for the precise statements.

\textbf{Theorem A:} \textit{(Stability under Ulam discretization).}
Let $\mcl$ be a random Lasota-Yorke map satisfying the conditions of \S\ref{S:LYMaps}.
Let $\{\mcl_k\}_{k\in \N}$ be the sequence of Ulam discretizations of $\mcl$, corresponding to uniform partitions of the domain into $k$ bins.
Assume $\mcl$ satisfies \textit{good} Lasota-Yorke inequalities (see \S\ref{Ssec:Ulam} for the precise meaning).
Then, for each sufficiently large $k$, $\mcl_k$ has a unique random acim.
Let $\{F_k\}_{k\in \N}$ be the sequence of random acims for $\mcl_k$. Then, $\lim_{k\to \infty}F_k=F$ fibrewise in $L_1$\footnote{\label{fn:conv} In fact, the fibrewise convergence holds in some fractional Sobolev norm $\hptt$, with $0<t'<\frac{1}{p}<1$, which in particular implies convergence in $L_p$ for some $p>1$. Since the domain is bounded, this yields convergence in $L_1$ as well.}.

\textbf{Theorem B:} \textit{(Stability under convolutions).}
Let $\mcl$ be a random Lasota-Yorke map satisfying the conditions of \S\ref{S:LYMaps}.
Assume $\mcl$ satisfies \textit{good} Lasota-Yorke inequalities (see \S\ref{Ssec:PertByConvolution} for the precise meaning).
Let $\{\mcl_k\}_{k\in \N}$ be a family of perturbations, arising from convolution with positive kernels $Q_k$, such that $\lim_{k\to \infty}\int Q_k(x)|x|\,dx=0$\footnote{This condition is equivalent to weak convergence of $Q_k$ to $\del_0$.}.
Then, for sufficiently large $k$, $\mcl_k$ has a unique random acim.
Let us call it $F_k$.
Then, $\lim_{k\to \infty}F_k=F$ fibrewise in $L_1\,^{\ref{fn:conv}}$.

\textbf{Theorem C:} \textit{(Stability under static perturbations).}
Let $\mcl$ be a random Lasota-Yorke map satisfying the conditions of \S\ref{S:LYMaps}.
Let $\{\mc{L}_k\}_{k\in \N}$ be a family of random Lasota-Yorke maps over the same base as $\mcl$, satisfying the conditions of \S\ref{S:LYMaps}, with the same bounds as $\mc{L}$.
Assume that there exists a sequence $\{\rho_k\}_{k>0}$ with $\lim_{k\to \infty}\rho_k= 0$ such that for $\paeom$, $d_{LY}(T_{k,\om}, T_\om)\leq \rho_k$, where $d_{LY}$ is a metric on the space of Lasota-Yorke maps. Furthermore, suppose that \emph{either}
\begin{enumerate}[(i)]
\item
$\{ T_\om\}_{\om \in \Om}$ satisfies a generalized \textit{no-periodic turning points} condition (see \S\ref{Ssec:DetPert} for full details); \emph{or}

\item
The expansion is sufficiently strong ($\mu^\ga>2$, where $\mu$ is a lower bound on $|DT_\om(x)|$, and $0<\ga\leq 1$ is the H\"older exponent of $DT_\om$), and $T_\omega$ depends continuously on $\omega$.
\end{enumerate}

Then, for every sufficiently large $k$, $\mcl_k$ has a unique random acim.
Let $\{F_k\}_{k\in \N}$ be the sequence of random acims for $\mcl_k$.
Then, $\lim_{k\to \infty}F_k=F$ fibrewise in $L_1\,^{\ref{fn:conv}}$.

\subsection{Structure of the paper}
The paper is organized as follows.
An abstract stability result, Theorem~\ref{thm:StabRandomAcim}, is presented in \S\ref{S:StabilityResult}, after introducing the underlying setup. Examples are provided in \S\ref{sec:Ex}. They include perturbations arising from finite-rank discretization schemes, perturbations by convolution, and  static perturbations  of random Lasota-Yorke maps. The theoretical results are illustrated with a numerical example in \S\ref{sec:numEx}.
Section~\ref{S:techPfs} contains proofs of the technical results.

\section{A stability Result}\label{S:StabilityResult}
\subsection{Preliminaries}
In this section, we introduce some notation.

\begin{defn}\label{defn:RandomDS}
A \textbf{strongly measurable separable random linear system with ergodic and invertible base}, or for short a \textbf{random dynamical system}, is a tuple $\mc{R}=(\Om,\mc{F}, \bbp, \sig, X, \mcl)$ such that
$(\Om, \mc{F}, \bbp)$ is a Lebesgue space, $\sig: (\Om, \mc{F}) \circlearrowleft$ is an invertible and ergodic $\bbp$-preserving transformation, $X$ is a separable Banach space, $L(X)$ denotes the set of bounded linear maps of $X$, and $\mcl: \Om \to L(X)$ is a strongly measurable map. We use the notation $\mcl_\om^{(n)}=\mcl(\sig^{n-1}\om) \circ \dots \circ \mcl(\om)$.
\end{defn}

\begin{defn}
  A random dynamical system $\mc{R}$ is called \textbf{quasi-compact} if $\ka^*(\mc{R})<\lam^*(\mc{R})$, where $\ka^*$, the \textbf{index of compactness}, and $\lam^*$, the \textbf{maximal Lyapunov exponent}, are defined as the following $\bbp$-almost everywhere constant limits:
\begin{align*}
  \lam^*&:=\lim_{n \to \infty} \frac{1}{n} \log \|\mcl_{\om}^{(n)}\|,\\
  \ka^*&:= \lim_{n\to \infty} \frac{1}{n} \log \|\mcl_\om^{(n)}\|_{ic(X)},
\end{align*}
where $\|A\|_{ic(X)}$ denotes the measure of non-com\-pact\-ness
\[
\|A\|_{ic(X)}:=\inf \{r>0 : A(B_X) \text{ can be covered by finitely
  many balls of radius } r \},
\]
and $B_X$ denotes the unit ball in $X$.

The \textbf{Lyapunov spectrum} at $\om\in \Om$ is the set $\Lam(\mc{R}(\om)):=\big\{ \lim_{n \to \infty} \frac{1}{n} \log \|\mcl_{\om}^{(n)}f\| : f\in X \big\}$.
A number $\lam$ is called an \textbf{exceptional Lyapunov exponent} if
$\lam \in \Lam(\mc{R}(\om))$ for $\paeom$ and $\lam>\ka^*$.
\end{defn}

\begin{defn}\label{def:temp}
A function $f:\Om \to \R$ is called
    \textbf{tempered with respect to $\sig$}, or simply \textbf{tempered} if $\sig$ is clear from the context, if for $\bbp$-almost every $\om$, $\lim_{n \to \pm \infty} \frac{1}{|n|} \log |f(\sig^n \om)|=0$.
\end{defn}

\begin{rmk}\label{rmk:temp}
It is straightforward to check that $f$ is tempered if and only if for every $\ep>0$, there exists a measurable function $g:\Om \to \R_+$ such that $f(\sig^n \om)\leq g(\om) e^{\ep|n|}$. Furthermore, $g$ may be chosen to be tempered. Indeed, one may replace $g$ by $\tld{g}(\om):=\inf_{n\in \Z} g(\sig^n\om)e^{\ep|n|}$. Also,
$\lim\sup_{n \to \pm \infty} \frac{1}{|n|} \log |\tld{g}(\sig^n \om)|\leq \ep$. 

A consequence of Tanny's theorem presented in \cite[Lemma C.3]{GTQuas}, states that either $\tld{g}$ is tempered or $\lim\sup_{n \to \pm \infty} \frac{1}{|n|} \log |\tld{g}(\sig^n \om)|=\infty$ for $\paeom$. Therefore, the previous paragraph shows that $\tld{g}$ is tempered.
\end{rmk}

We will rely on the following statement, which extends the work of Lian and Lu \cite{LianLu} by providing the existence of an \textit{Oseledets splitting} in the context of non-invertible operators.
\begin{thm}\cite[Theorem 2.10]{GTQuas}\label{thm:OselSpl}
Let $\mc{R}=(\Om,\mc{F}, \bbp, \sig, X, \mcl)$ be a quasi-compact strongly measurable separable linear random dynamical system with ergodic invertible base such that 
$\int \log^+\|\mcl\| \, d\bbp<\infty$.

Let $\lam^*=\lam_1>\dots > \lam_l>
  \ka^*$ be the (at most countably many) exceptional Lyapunov exponents of $\mc{R}$, and $m_1,
  \dots, m_l\in \N$ the corresponding multiplicities
  (or in the case $l=\infty$, $\lam_1>\lam_2>\ldots$ with $m_1,m_2,\ldots$ the
  multiplicities).

  Then, up to $\bbp$-null sets, there exists a unique, measurable,
  equivariant splitting of $X$ into closed subspaces, $X=V(\om)\oplus
  \bigoplus_{j=1}^l E_j(\om)$, where generally $V(\om)$ is infinite
  dimensional and $\dim E_j(\om)=m_j$.  Furthermore, for every $y\in
  E_j(\om)\setminus \{0\}$, $\lim_{n \to \infty} \frac{1}{n}\log
  \|\mcl_{\om}^{(n)}y\|=\lam_j$, for every $v\in V(\om)$, $ \limsup_{n
    \to \infty} \frac{1}{n}\log \|\mcl_{\om}^{(n)}v\|\leq \ka^*$ and the
  norms of the projections associated to the splitting are tempered
  with respect to $\sig$. 
\end{thm}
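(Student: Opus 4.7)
The plan is to start from the Oseledets filtration supplied by Lian--Lu and refine it into a full splitting, using the invertibility of the base $\sig$ as the crucial extra ingredient. Applying Kingman's subadditive ergodic theorem to $\log \|\Lambda^k \mcl_\om^{(n)}\|$ on the exterior powers of $X$, together with the gap $\ka^* < \lam^*$, produces the exceptional Lyapunov exponents $\lam_1 > \lam_2 > \ldots$ and their multiplicities as $\bbp$-a.e.\ constants. The Lian--Lu results then yield a decreasing, measurable, equivariant family $X \supset W_1(\om) \supset W_2(\om) \supset \ldots$ of closed subspaces, where a vector in $W_j(\om) \setminus W_{j+1}(\om)$ grows at rate exactly $\lam_j$. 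The content of Theorem~\ref{thm:OselSpl} that is missing from Lian--Lu is the existence of an equivariant complement to each $W_{j+1}(\om)$ inside $W_j(\om)$, despite the non-invertibility of the operators $\mcl(\om)$.

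I would first construct the top fast space $E_1(\om)$ as a pullback limit in the Banach Gra{\ss}mannian of the form
\[
E_1(\om) \;=\; \lim_{n \to \infty} \mcl_{\sig^{-n}\om}^{(n)}(F_n),
\]
with $F_n \subset X$ a measurably chosen $m_1$-dimensional subspace transverse to $W_2(\sig^{-n}\om)$. The directions lying in $W_2$ are stretched at rate at most $\lam_2$, whereas directions of $F_n$ transverse to $W_2$ are stretched at rate $\lam_1$, so after normalisation the image tilts toward the ``purely fast'' part. The exponential gap $\lam_1 - \lam_2$ (or $\lam_1 - \ka^*$ when $l = 1$) drives a Cauchy estimate in the Gra{\ss}mannian, while quasi-compactness rules out leakage into the essential-spectrum directions. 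Measurability of $E_1(\om)$ is inherited from its finite-$n$ approximants, and equivariance $\mcl_\om E_1(\om) = E_1(\sig\om)$ is forced by the very form of the pullback after a one-step re-indexing.

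Having obtained $E_1(\om)$, I would proceed inductively: restrict the cocycle to a measurable complement of $E_1(\om)$ inside $W_1(\om)$, whose top Lyapunov exponent is then $\lam_2$, and repeat the construction to obtain $E_2(\om)$; iterate until all $l$ fast spaces are produced, and set $V(\om) := \bigcap_j W_j(\om)$ (equivalently $W_{l+1}(\om)$ when $l < \infty$). A dimension and growth-rate comparison against the Lian--Lu filtration shows that $\bigoplus_{j=1}^{l} E_j(\om) \oplus V(\om)$ is direct and equals all of $X$: any failure would produce a vector violating the characterisation of growth rates on $W_j(\om) \setminus W_{j+1}(\om)$.

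Finally, temperedness of the associated projections would follow from the same growth characterisation: if $\|\Pi_j(\sig^n \om)\|$ grew super-exponentially along a subsequence, one could extract by normalisation a vector in $W_j(\om) \setminus W_{j+1}(\om)$ whose $\mcl_\om^{(n)}$-iterates grow strictly slower than $\lam_j$, contradicting Lian--Lu. Combined with measurability and a.e.\ finiteness, this places the projection norms in the setting of Remark~\ref{rmk:temp} and Tanny's theorem, which upgrades the resulting sub-exponential bound to temperedness. The main obstacle in the entire plan is the Gra{\ss}mannian pullback step: for non-invertible operators one cannot simply invert $\mcl_\om^{(n)}$ to peel off fast directions, so Cauchyness of the approximating subspaces must be extracted from a delicate quantitative balance between the exponential gap between consecutive exponents and the quasi-compactness estimate, and this is precisely where the hypothesis $\ka^* < \lam^*$ is used in an essential way.
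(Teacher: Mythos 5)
The paper does not contain a proof of this statement: Theorem~\ref{thm:OselSpl} is quoted verbatim from \cite{GTQuas} (Theorem~2.10 there), and no argument is reproduced or summarised in the present manuscript. Your reconstruction does broadly match the strategy of that reference: the semi-invertible Oseledets theorem is obtained by starting from a Lian--Lu type multiplicative ergodic theorem (giving exponents, multiplicities and an equivariant filtration), and then using invertibility of the base $\sig$ to manufacture the fast spaces $E_j(\om)$ as pullback limits of images $\mcl_{\sig^{-n}\om}^{(n)}(F_n)$, with the exponential gap between consecutive exceptional exponents and the quasi-compactness hypothesis $\ka^*<\lam^*$ supplying the contraction in the Grassmannian. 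The inductive peeling-off of fast spaces and the identification of $V(\om)$ with the slow part of the filtration are likewise the route taken there.

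That said, if you intended this as a self-contained argument rather than a sketch, several steps are too compressed to stand on their own. First, the Cauchy estimate in the Grassmannian is the crux and is not delivered by the remark that ``after normalisation the image tilts toward the purely fast part''; one must control the angle between the trial subspaces $F_n$ and the slow spaces $W_{j+1}(\sig^{-n}\om)$ along the backward orbit, and these angles are only tempered, not uniformly bounded below, so the gap $\lam_j-\lam_{j+1}$ has to beat a subexponential degeneracy. Second, measurability of the limiting subspace is not automatic from measurability of the finite-$n$ approximants; one needs a measurable selection of the $F_n$ and convergence that is uniform on sets of measure close to one. Third, temperedness of the projections does not follow directly from the Lyapunov-exponent characterisation of growth rates; the standard route is via subexponential control of the angles between the Oseledets subspaces along both forward and backward orbits, typically via a Tanny-type argument. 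These are exactly the technical points that \cite{GTQuas} devotes its effort to, so your outline identifies the right skeleton but leaves the load-bearing estimates unproved.
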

\begin{rmk}
The notation used for projections associated to the splitting are as follows.
  $\Pi_{j,\om}$ will denote the projection onto $E_j(\om)$ along $V(\om)\oplus
  \bigoplus_{i=1, i\neq j}^l E_i(\om)$; $\Pi_{\leq j,\om}$ will denote the projection onto $\bigoplus_{i=1}^{j} E_i(\om)$ along $V(\om)\oplus \bigoplus_{i= j+1}^l E_i(\om)$; $\Pi_{>j,\om}:=I - \Pi_{\leq j,\om}$.
\end{rmk}

\begin{defn}
A random dynamical system $\mc{R}=(\Om,\mc{F}, \bbp, \sig, X, \mcl)$ is called \textbf{splittable} if it has an Oseledets splitting.
Equivalently, one may also say that $\mc{R}$ \textbf{splits}, or \textbf{splits with respect to $X$} if the choice of Banach space needs to be emphasized.
\end{defn}

We conclude by making the following notational convention.
\begin{conv}
Throughout the paper, $C_\#$ denotes various constants that depend only on parameters $t, t', p, \ga$. The value of $C_\#$ may change from one appearance to the next.
\end{conv}

\subsection{Setting}\label{S:setting}
Let $(Y,|\cdot|)$, $(X,\|\cdot\|)$ be Banach spaces, with a compact embedding $X\hookrightarrow Y$, and a continuous embedding $Y\hookrightarrow L^1(Leb)$. 

Consider splittable random linear dynamical systems $\mc{R}=(\Om,\mc{F}, \bbp, \sig, X, \mcl)$ and $\mc{R}_k=(\Om,\mc{F}, \bbp, \sig, X, \mcl_k)$, $k\geq 1$, with a common ergodic invertible base $\sig:\Om \circlearrowleft$, and satisfying the following:
\begin{enumerate}
 \renewcommand{\theenumi}{H\arabic{enumi}} 
 \renewcommand{\labelenumi}{(\textbf{\theenumi})}
\setcounter{enumi}{-1} 
\item \label{it:TopExp}
$\int \log^+\|\mcl\| \, d\bbp<\infty$ and for every $k\in \N$, $\int \log^+\|\mcl_k\| \, d\bbp<\infty$.
Furthermore, for every $k\in \N$, $f\in X$ and $\paeom$,
$\mcl_{k,\om}$ and $\mcl_\om$ preserve the cone of non-negative functions, and satisfy
$\int \mcl_{k,\om}f\,dm=\int f \,dm=\int \mcl_\om f \,dm$.
\end{enumerate}
\begin{rmk}\label{rmk:H1}
In all the examples of this paper, condition \eqref{it:TopExp} is clearly satisfied. Thus, we will henceforth assume it holds and use it wherever needed.  
\end{rmk}

\begin{enumerate}
 \renewcommand{\theenumi}{H\arabic{enumi}} 
 \renewcommand{\labelenumi}{(\textbf{\theenumi})} 
\item \label{it:UnifLY} 
There exist a constant $B>0$ and a measurable $\al:\Om \to \R_+$ with
$\ka:=\int \log \al(\om) \, d\bbp(\om)<0$, such that for every $f\in X$, $k\in \N$ and $\paeom$,
\begin{equation}\label{eq:UnifLY}
 \|\mcl_{k,\om} f\|\leq \al(\om) \|f\|+B|f|.
\end{equation}
\end{enumerate}
\begin{rmk}\label{rmk:indComp} 
Whenever \eqref{it:UnifLY} holds, \cite[Lemma C.5]{GTQuas} ensures that $\ka^*(\mc{R}_k)<0$.
\end{rmk}

A version of the following statement was used by Buzzi \cite{Buzzi}, and is also derived in \cite[Lemma C.5]{GTQuas}:
Condition \eqref{it:UnifLY} is implied by the following more practical condition.
\begin{enumerate}
 \renewcommand{\theenumi}{H\arabic{enumi}'} 
 \renewcommand{\labelenumi}{(\textbf{\theenumi})} 
\item \label{it:GenUnifLY}
 $\{\log \|\mcl_{k,\om}\|\}_{k\in \N}$ is dominated by a $\bbp$-integrable function, and there exist measurable functions $\tld{\al}, \tld{B}:\Om \to \R_+$ with
$\int \log \tld{\al}(\om) \, d\bbp(\om)<0$, such that for every $f\in X$, $k\in \N$ and $\paeom$,
\begin{equation}\label{eq:GenUnifLY}
 \|\mcl_{k,\om} f\|\leq \tld{\al}(\om) \|f\|+ \tld{B}(\om)|f|.
\end{equation}
\end{enumerate}
Furthermore, $\ka$ in \eqref{eq:UnifLY} may be chosen arbitrarily close to $\int \log \tld{\al}(\om) \, d\bbp(\om)$.

\begin{enumerate}
 \renewcommand{\theenumi}{H\arabic{enumi}} 
 \renewcommand{\labelenumi}{(\textbf{\theenumi})}
\setcounter{enumi}{1} 
\item \label{it:PowBd}
For $\paeom$, $\sup_{k,n\in \N}|\mcl_{k,\sig^{-n}\om}^{(n)}|=:C(\om)$ is well defined and $C$ is tempered with respect to $\sig$.

\item \label{it:SmallPert} There exists $\tau_k\to0$ as $k\to \infty$ such that for $\paeom$,
\[
\sup_{\|g\|=1} |(\mcl_\om-\mcl_{k,\om})g|\leq \tau_k.
\]
\end{enumerate}

We conclude this section with the following lemma, which provides a temperedness condition analogous to that of \eqref{it:PowBd}, but with respect to the strong norm.
It will be used in bootstrapping arguments in the examples of \S\ref{sec:Ex}.
\begin{lem}\label{lem:PowerBoundStrongNorm}
Suppose conditions \eqref{it:UnifLY} and \eqref{it:PowBd} hold. Then,
$\sup_{k,n\in \N} \|\mcl_{k,\sig^{-n}\om}^{(n)}\|=:C'(\om)$ is well defined for $\paeom$, and $C'$ is tempered with respect to $\sig$.
In particular, $\lam_{k,1}\leq 0$ for every $k\in \N$.
Furthermore, if \eqref{it:TopExp} holds and if there exists $f\in X$ with $\int f\,dm\neq 0$, then for every $k\in \N$, $\lam_{k,1}=0$, where $\lam_{k,1}:=\lam^*(\mc{R}_k)$ is the maximal Lyapunov exponent of $\mcl_{k}$.
\end{lem}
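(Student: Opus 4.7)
The plan is to iterate (H1) backward along the orbit, use (H2) to control the weak-norm intermediate terms, derive a Lindley-type recursion for $C'$, and deduce temperedness from that of $C$ together with the negative drift $\ka<0$. Concretely, iterating (H1) $n$ times along $(\sig^{-n}\om,\dots,\sig^{-1}\om)$ and bounding each weak-norm remainder via (H2) and the embedding $X\hookrightarrow Y$ (with constant $c$ so that $|\cdot|\leq c\|\cdot\|$) gives, for $\|f\|\leq 1$ and $\paeom$,
\[
\|\mcl_{k,\sig^{-n}\om}^{(n)}f\|\leq \prod_{j=1}^n\al(\sig^{-j}\om)+Bc\sum_{m=1}^n\prod_{j=1}^{m-1}\al(\sig^{-j}\om)\,C(\sig^{-m}\om).
\]
Since $\int\log\al\,d\bbp=\ka<0$, Birkhoff's ergodic theorem makes $\prod_{j=1}^n\al(\sig^{-j}\om)$ decay exponentially; combined with the tempered majorant $C(\sig^{-m}\om)\leq g_\ep(\om)e^{\ep m}$ supplied by Remark~\ref{rmk:temp} for any $\ep\in(0,|\ka|)$, the series converges by geometric comparison, so $C'(\om)<\infty$ for $\paeom$.

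For temperedness I would derive the Lindley-type inequality
\[
C'(\sig\om)\leq 1+\al(\om)\,C'(\om)+Bc\,C(\om)
\]
by writing $\mcl_{k,\sig^{-(n+1)}(\sig\om)}^{(n+1)}=\mcl_k(\om)\circ\mcl_{k,\sig^{-n}\om}^{(n)}$, applying (H1) once, (H2) to the weak-norm remainder, and taking the supremum in $k,n$. Iterating forward,
\[
C'(\sig^n\om)\leq \prod_{l=0}^{n-1}\al(\sig^l\om)\,C'(\om)+\sum_{j=0}^{n-1}\prod_{l=j+1}^{n-1}\al(\sig^l\om)\bigl(1+BcC(\sig^j\om)\bigr).
\]
By Birkhoff the first term tends to zero exponentially, and after the reindexing $k=n-1-j$ the sum is the $n$th partial sum of $R(\sig^n\om)$, where
\[
R(\om):=\sum_{k\geq0}\bigl(1+BcC(\sig^{-k-1}\om)\bigr)\prod_{j=1}^k\al(\sig^{-j}\om)
\]
is the non-negative stationary solution of the equality version of the recursion. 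Hence $C'(\sig^n\om)$ is asymptotically bounded by $R(\sig^n\om)$, and temperedness of $C'$ in the positive direction reduces to that of $R$. Using $C(\sig^{-k-1}\om)\leq g_\ep(\om)e^{\ep(k+1)}$ and splitting the defining series of $R(\sig^n\om)$ at a Birkhoff-controlled cut-off past which $\sum_{j=1}^k\log\al(\sig^{-j}(\sig^n\om))\leq k(\ka+\ep)$, the tail is a convergent geometric series while the initial portion is controlled via the shift-invariance of the tempered characterization in Remark~\ref{rmk:temp}; this yields $R(\sig^n\om)\leq e^{C_\#\ep|n|}\Psi_\ep(\om)$ for some a.s.\ finite $\Psi_\ep$, proving temperedness on letting $\ep\downarrow 0$. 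The case $n\to-\infty$ is treated symmetrically starting from the explicit bound of the previous paragraph evaluated at $\sig^{-n}\om$. The quantitative uniform-in-shift Birkhoff control needed here is the main technical step.

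Finally, temperedness of $C'$ gives $\|\mcl_{k,\om}^{(n)}\|\leq C'(\sig^n\om)=e^{o(n)}$ $\paeom$ (using $\mcl_{k,\om}^{(n)}=\mcl_{k,\sig^{-n}(\sig^n\om)}^{(n)}$), hence $\lam_{k,1}\leq 0$. Assuming additionally \eqref{it:TopExp} and $f\in X$ with $\int f\,dm\neq 0$, the integral-preservation identity $\int\mcl_{k,\om}^{(n)}f\,dm=\int f\,dm\neq0$ combined with the continuous embeddings $X\hookrightarrow Y\hookrightarrow L^1(\leb)$ yields $\|\mcl_{k,\om}^{(n)}f\|\geq C_\#^{-1}|\int f\,dm|>0$ for all $n$, giving $\lam_{k,1}\geq 0$; combined with the upper bound, $\lam_{k,1}=0$.
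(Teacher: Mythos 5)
Your opening iteration of (H1), your use of (H2) to control the intermediate weak norms, and your treatment of the final statements ($\lam_{k,1}\leq 0$ from temperedness of $C'$, then $\lam_{k,1}=0$ from integral preservation and the embedding into $L^1$) are all correct and match the paper, and are in fact more explicit than the paper's closing "The second part of the lemma is immediate."

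The temperedness argument, however, has a genuine gap, and you flag it yourself: the "quantitative uniform-in-shift Birkhoff control" you say you need is not supplied. Birkhoff's theorem gives $\prod_{j=1}^l\al(\sig^{-j}\om)\to 0$ exponentially for a.e.\ $\om$, but the random time past which the Birkhoff average drops below $\ka+\ep$ depends on $\om$, and you have no control over how it grows along $\sig^n\om$ — that is precisely what is missing, and the Lindley recursion and stationary-solution detour do not produce it. The device the paper uses (and which is the standard tool here) is to introduce a \emph{tempered} majorant for the $\al$-products: a tempered $A(\om)$ with $\prod_{j=1}^l\al(\sig^{-j}\om)\leq A(\om)e^{(\ka+\ep/2)l}$ for all $l\geq 0$ and a.e.\ $\om$; this comes from Remark~\ref{rmk:temp} / Tanny's theorem applied to $\sup_{l\geq0}\bigl[\sum_{j=1}^l\log\al(\sig^{-j}\om)-(\ka+\ep/2)l\bigr]$. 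Once you have $A$, you do not need the Lindley recursion at all: inserting $A$ and the tempered majorant $D$ for $C$ (with $C(\sig^{-m}\om)\leq D(\om)e^{\ep m/2}$) directly into your first-paragraph bound gives
\begin{equation*}
C'(\om)\ \leq\ A(\om)\Bigl(1+Bc\,D(\om)\textstyle\sum_{m\geq1}e^{(\ka+\ep/2)(m-1)}e^{\ep m/2}\Bigr),
\end{equation*}
which is a finite sum and product of tempered functions (the geometric series converges since $\ka+\ep<0$), hence tempered. Without constructing $A$, or some equivalent tempered control of the $\al$-products uniformly over the orbit, your argument does not close.
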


\begin{proof}
Iterating \eqref{eq:UnifLY}, we get
\begin{align*}
 \|\mcl_{k,\sig^{-n}\om}^{(n)} f\| &\leq \al_{\sig^{-n}\om}^{(n)} \|f\| + B\sum_{j=0}^{n-1} \al_{\sig^{-n+j+1}\om}^{(n-j-1)} |\mcl_{k,\sig^{-n}\om}^{(j)} f|,
\end{align*}
where $\al_\om^{(l)}$ denotes the product $\al_\om \cdot \al_{\sig\om}\dots \al_{\sig^{l-1}\om}$ when $l>0$, and 1 when $l=0$.

Let $\ka=\int \log \al \, d\bbp<0$, and let $0<\ep<-\ka$. Then, there exists a tempered function $A$ such that for every $l\geq 0$ and $\paeom$, $\al_{\sig^{-l}\om}^{(l)}\leq A(\om) e^{(\ka+\ep/2)l}\leq A(\om)$. 

Since $C$ from \eqref{it:PowBd} is tempered, Remark~\ref{rmk:temp} shows there exists a tempered function $D:\Om \to \R_+$ such that $C(\sig^l\om)\leq D(\om) e^{\ep|l|/2}$.
Then,
\begin{align*}
 \|\mcl_{k,\sig^{-n}\om}^{(n)} f\| &\leq 
A(\om) \|f\| + B  \sum_{j=0}^{n-1} A(\om) e^{(n-j-1)(\ka+\ep/2)} C(\sig^{-n+j}\om)  |f|\\
&\leq A(\om)\Big( 1+ B D(\om) \sum_{j=0}^{n-1} e^\ep e^{(n-j-1)(\ka+\ep)} \Big) \|f\|\leq C'(\om)\|f\|,
\end{align*}
where $C'(\om):= A(\om) \Big( 1+ B D(\om)e^\ep/(1- e^{\ka+\ep}) \Big)$.
Since $A$ and $D$ are tempered, $C'$ is tempered, as claimed.

The second part of the lemma is immediate.
\qed \end{proof}

\subsection{Stability of random acims}
For each $n\in \N$ and $G:\Om \times I\to \R$, with $g_\om:=G(\om,\cdot)\in X$,
we let $\mcl^n G:\Om \times I\to \R$ be the function defined fibrewise by $\mcl^n G(\om,\cdot):=\mcl_{\sig^{-n}\om}^{(n)} g_{\sig^{-n}\om}$. Let $\mcl^n_k G$ be defined analogously.

Condition~\eqref{it:TopExp} shows that if $\mcl$ (or $\mcl_k$) has a non-negative fixed point, then one can in fact choose it to be fibrewise normalized in $L^1(\leb)$. In a slight abuse of notation, we call any such fixed point $F$ a \textit{random acim} for $\mcl$ (or $\mcl_k$), provided $\om \mapsto f_\om:=F(\om,\cdot)$ is $(\mc{F}, \mc{B}_X)$ measurable, where $\mc{B}_X$ is the Borel $\sig$-algebra of $(X, \|\cdot\|)$. 

The main result of this section is the following.
\begin{thm}\label{thm:StabRandomAcim}
Suppose $\mc{R}$ and $\mc{R}_k$, $k\geq 1$ are 
strongly measurable separable linear random dynamical systems with a common ergodic invertible
base satisfying conditions \eqref{it:TopExp}--\eqref{it:SmallPert}.
Assume $1\in X$\footnote{The conclusions remain valid if the condition $1\in X$ is replaced by the existence of a non-zero, non-negative element in $X$.}.

Assume that $\mc{R}$ splits (i.e. has an Oseledets splitting) with respect to $|\cdot|$
and $\dim E_1=1$. Then, $\mc{R}$ has a unique random acim, $F$.
For sufficiently large $k$, there is a unique random acim for $\mc{R}_k$, which is denoted by $F_k$.
Furthermore, $\lim_{k\to \infty}F_k=F$ fibrewise in $|\cdot|$. That is, for $\paeom$, $\lim_{k\to \infty}|f_\om-f_{k,\om}|=0$.
\end{thm}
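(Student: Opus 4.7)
The overall plan is to adapt the Keller--Liverani spectral perturbation strategy to the random setting, combining the Oseledets splitting of $\mc R$ from Theorem~\ref{thm:OselSpl} with the compact embedding $X\hookrightarrow Y$, the Lasota--Yorke-type bounds (\ref{it:UnifLY})--(\ref{it:PowBd}), and the perturbation estimate (\ref{it:SmallPert}). First I would pin down the top Lyapunov exponent $\lam_1$ of $\mc R$: Lemma~\ref{lem:PowerBoundStrongNorm} gives $\lam_1\le 0$, while integral- and cone-preservation in (\ref{it:TopExp}) applied to $1\in X$ force $\int \mcl_\om^{(n)} 1\, dm = 1$ for all $n$, hence $\lam_1\ge 0$. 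Since $\dim E_1(\om)=1$ and $\mcl_\om$ preserves the positive cone, the line $E_1(\om)$ meets the cone in a half-ray; normalizing by $\int\cdot\,dm=1$ produces a random acim $f_\om$ whose measurability is inherited from $\Pi_{1,\om}$. Uniqueness is automatic: any random acim has Lyapunov exponent $0$ and thus lies fibrewise in $E_1(\om)$, so the normalization pins it down.

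For $\mc R_k$ the same reasoning yields $\lam_{k,1}=0$, but one must still establish $\dim E_{k,1}(\om)=1$ for all sufficiently large $k$; once in hand, cone preservation and normalization deliver a unique $f_{k,\om}\in E_{k,1}(\om)$. I would argue by contradiction: if along a subsequence $k_j\to\infty$ one could find two fibrewise $\mcl_{k_j}$-invariant elements $h^{(1)}_{k_j,\om},h^{(2)}_{k_j,\om}\in E_{k_j,1}(\om)$ that are uniformly bounded away from being proportional (say $|h^{(i)}_{k_j,\om}|=1$ and $|h^{(1)}_{k_j,\om}-h^{(2)}_{k_j,\om}|\ge 1$), then Lemma~\ref{lem:PowerBoundStrongNorm} would bound them in $X$ and the compact embedding $X\hookrightarrow Y$ would produce $|\cdot|$-limits $g^{(i)}_\om$ along a further subsequence. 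An application of (\ref{it:SmallPert}) together with the bounds $\|h^{(i)}_{k_j,\cdot}\|\le C'(\cdot)$ gives $|\mcl_\om g^{(i)}_\om-g^{(i)}_{\sig\om}|\to 0$, hence $\mcl$-invariance of each $g^{(i)}$; integral preservation keeps $\int g^{(i)}_\om\,dm=1$ and the separation persists, so both limits sit in $E_1(\om)$ and are linearly independent, contradicting $\dim E_1=1$.

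For the convergence $F_k\to F$ fibrewise in $|\cdot|$, I would exploit the invariance identity
\[
f_{k,\om}-f_\om=\mcl_{k,\sig^{-n}\om}^{(n)}\big(f_{k,\sig^{-n}\om}-f_{\sig^{-n}\om}\big)+\big(\mcl_{k,\sig^{-n}\om}^{(n)}-\mcl_{\sig^{-n}\om}^{(n)}\big)f_{\sig^{-n}\om}.
\]
Telescoping the second piece as $\sum_{j=0}^{n-1}\mcl_{k,\sig^{j+1-n}\om}^{(n-j-1)}(\mcl_{k,\sig^{j-n}\om}-\mcl_{\sig^{j-n}\om})f_{\sig^{j-n}\om}$ and invoking (\ref{it:PowBd}) and (\ref{it:SmallPert}) bounds its $|\cdot|$-norm by $C(\om)\tau_k\sum_{j=0}^{n-1}\|f_{\sig^{j-n}\om}\|$; temperedness of $\om\mapsto\|f_\om\|$ keeps the sum subexponential in $n$. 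The first piece involves a function of integral zero, which (using $\dim E_{k,1}=1$ together with $\mcl_k$-invariance of $\ker(\int\cdot\,dm)$, a closed equivariant codimension-one subspace that must coincide with the Oseledets complement of $E_{k,1}$) lies in the $\mc R_k$-Oseledets complement of $E_{k,1}(\sig^{-n}\om)$; the spectral gap of $\mc R_k$ then forces $|\mcl_{k,\sig^{-n}\om}^{(n)}(\cdot)|\to 0$ as $n\to\infty$. Choosing $n=n(\om,k)\to\infty$ slowly enough as $k\to\infty$ so that $\tau_k$ swamps the subexponential growth makes both pieces tend to zero.

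The principal obstacle is the dimension-one argument for $E_{k,1}$ together with the uniform-in-$k$ control of the decay rate of $\mcl_{k,\cdot}^{(n)}$ on $\ker(\int\cdot\,dm)$ needed in the final step. In Keller--Liverani's single-map setup these points are classical consequences of having an isolated simple eigenvalue, but in the random framework the Oseledets projections are only tempered in $\om$, so the balance between $n(\om,k)$, $\tau_k$, and the tempered constants governing growth of $\|f_{\sig^{j-n}\om}\|$ and the Oseledets contraction rate must be struck with some care.
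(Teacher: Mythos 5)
Your overall architecture (identify $\lambda_1=0$, use cone preservation to build the acim, then a compactness/perturbation argument for uniqueness of $F_k$ and a telescoping estimate for convergence) is in the right spirit, but two of the three steps rest on claims that do not hold or are not established, and both issues are precisely the ones the paper's proof is engineered to avoid.

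First, your uniqueness argument for $\mathcal R_k$ starts from the hypothesis that $\dim E_{k,1}>1$ would give you \emph{two} fibrewise $\mathcal L_k$-invariant elements $h^{(1)}_{k,\omega},h^{(2)}_{k,\omega}$, uniformly separated. That does not follow: $E_{k,1}(\omega)$ has Lyapunov exponent $0$, but its elements need not be fixed (e.g.\ the cocycle restricted to $E_{k,1}$ could act by a rotation or have a nontrivial Jordan structure), and when $\dim E_{k,1}>1$ even the existence of \emph{one} acim for $\mathcal R_k$ is not obvious by the paper's construction. What is automatic from $\dim E_{k,1}(\omega)>1$ is merely the existence of a nonzero $f\in E_{k,1}(\omega)$ with $\int f\,dm=0$, and that is what the paper's proof uses: it shows via conditions (H1)--(H3) and the Oseledets gap for the \emph{unperturbed} $\mathcal R$ that $|\mathcal L_{k,\omega}^{(n)}f|$ must contract along a positive-density set of return times, forcing $\liminf_n\tfrac1n\log|\mathcal L^{(n)}_{k,\omega}f|<0$, contradicting (through the equality of strong- and weak-norm Lyapunov exponents, \cite[Theorem 3.3]{FroylandStancevic}) the fact that $f\in E_{k,1}(\omega)$ has exponent $0$. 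Your compactness route would also need to justify measurability of the subsequential limits $g^{(i)}_\omega$ in $\omega$, which is delicate since the extracting subsequence a priori depends on $\omega$.

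Second, your convergence argument decomposes $f_{k,\omega}-f_\omega$ so that the damping term is $\mathcal L^{(n)}_{k,\sigma^{-n}\omega}$ acting on the zero-mean Oseledets complement of $\mathcal R_k$, and you correctly flag that the resulting decay rate (the gap $\lambda_{k,2}<0$) depends on $k$; this is not a technicality but the crux, and nothing in (H0)--(H3) gives a uniform-in-$k$ gap. The paper sidesteps this completely by using the \emph{reverse} telescoping on $R_{k,n,\omega}=(\mathcal L^{(n)}_{\sigma^{-n}\omega}-\mathcal L^{(n)}_{k,\sigma^{-n}\omega})1$: each term $(\mathcal L_{\sigma^i\omega}-\mathcal L_{k,\sigma^i\omega})\mathcal L_{k,\omega}^{(i)}f$ has zero mean and is therefore pushed forward by the \emph{unperturbed} $\mathcal L^{(j)}$, so the decay rate is the fixed $\lambda_2<0$ of $\mathcal R$, multiplied by the small factor $\tau_k$ from (H3). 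Combined with the limits $\mathcal L^{(n)}_{\sigma^{-n}\omega}1\to f_\omega$ and $\mathcal L^{(n)}_{k,\sigma^{-n}\omega}1\to f_{k,\omega}$ (established in the first two steps) and a Poincar\'e recurrence argument, this closes the proof without ever needing a uniform spectral gap for the perturbed cocycles. Reorganizing your telescoping in this direction is the key idea you are missing.
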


\begin{proof}
 We divide the argument into three steps.
\begin{enumerate}
 \renewcommand{\theenumi}{\Roman{enumi}} 
 \renewcommand{\labelenumi}{\textit{(\theenumi)}.}
 
 \item 
\textit{If $\dim E_{1}=1$, then there is an attractive random acim.}\\
Recall that Lemma~\ref{lem:PowerBoundStrongNorm} ensures $\lam_{1}=0$.
Let $\lam_2<0$ be the second Lyapunov exponent of $\mc{R}$\footnote{In case 0 is the only exceptional Lyapunov exponent for $\mc{R}$, we let $\lam_2=\ka$, where $\ka$ is as in \eqref{it:UnifLY}.}.
Let $f_\om\in E_1(\om)$ be such that $\int f_\om \,dm=1$.
This normalization condition is possible, because
$X_0:=\{f\in X: \int f\,dm=0\}$ is the Oseledets complement to the top Oseledets space $E_{1}(\om)$. 
That is, $X_0=V(\om)\oplus \bigoplus_{j=2}^l E_j(\om)$ for $\paeom$. 
This follows from condition \eqref{it:TopExp}. In particular, if $g\in X$ is such that $\int g \,dm\neq 0$,
then $\int \Pi_{1,\om}(g)\,dm=\int g \,dm\neq 0$. Thus, $\Pi_{1,\sig^{-n}\om}(1)\neq 0$ for every $n\in \N$ and $\paeom$.

Let $g_n:=\Pi_{1, \sig^{-n}\om}(1)$, $h_n:=\Pi_{>1,\sig^{-n}\om}(1)$. 
\cite[Lemma 2.13(1)]{GTQuas} ensures that for each $\ep>0$ there is a measurable function $D'(\om)$ such that
\[
\|\mcl_{\sig^{-n}\om}^{(n)}h_n\|\leq D'(\om) e^{(\lam_2+\ep)n}\|h_n\|.
\]
Temperedness of $\Pi_{>1}$, coming from Theorem~\ref{thm:OselSpl}, shows there is a measurable function $\tld{D}(\om)$ such that $\|h_n\|\leq \tld{D}(\om) e^{\ep n}$. Combining, with the above, one gets
\[
\|\mcl_{\sig^{-n}\om}^{(n)}h_n\|\leq D'(\om)\tld{D}(\om) e^{(\lam_2+2\ep)n}.
\]
By linearity of $\mcl^{(n)}_{\sig^{-n}\om}$, $\lim_{n\to \infty} d_X \big( \mcl^{(n)}_{\sig^{-n}\om} 1,E_1(\om) \big)=0$, where $d_X$ denotes the distance with respect to the norm on $X$, $\|\cdot\|$.
Also, the normalization condition on $f_\om$ ensures that
$\lim_{n\to \infty} \mcl^{(n)}_{\sig^{-n}\om} 1= f_\om $ in $X$, for $\paeom$. In particular, $f_\om$ is non-negative, as $X$ is continuously embedded in $L^1$ by assumption.
Thus $F=\lim_{n\to \infty} \mcl^n 1$ (fibrewise limit in $X$) is a random acim for $\mcl$. Measurability follows from measurability of $E_1(\om)$, guaranteed by Theorem~\ref{thm:OselSpl}.

\item
\textit{For sufficiently large $k$, $\mc{R}_k$ has a unique random acim.}\\
Lemma~\ref{lem:PowerBoundStrongNorm} yields $\lam_{k,1}=0$ for every $k$.
Thus, Remark~\ref{rmk:indComp} ensures $\mc{R}_k$ is quasi-compact and Theorem~\ref{thm:OselSpl}
shows that $\mc{R}_k$ splits with respect to $X$ for every $k\in \N$.

Let $f\in X$. Then,
\[
 \mcl^{(n)}_\om f -\mcl_{k,\om}^{(n)}f=
\sum_{j=0}^{n-1}\mcl^{(j)}_{\sig^{n-j}\om}(\mcl_{\sig^{n-j-1}\om}-\mcl_{k,\sig^{n-j-1}\om}) \mcl_{k,\om}^{(n-j-1)}f.
\]
Let $g_{k,\om, i}:=(\mcl_{\sig^{i}\om}-\mcl_{k,\sig^{i}\om}) \mcl_{k,\om}^{(i)}f$.
Then, since $\int g_{k,\om,i}(x)dx=0$, we have that $g_{k,\om, i}\in E_{>1}(\om')$ for 
$\bbp\text{-a.e. } \om' \in \Om$,
where $E_{>1}(\om):=V(\om) \oplus \bigoplus_{j=2}^l E_j(\om)$ is the complementary Oseledets space to $E_1(\om)$.
Thus, \cite[Lemma 2.13(1)]{GTQuas} ensures that for each $\ep>0$ there is a measurable function $D'(\om)$ such that 
\begin{equation}\label{eq:uniq1}
 \begin{split}
 |\mcl^{(n)}_\om f -\mcl_{k,\om}^{(n)}f| &\leq \sum_{j=0}^{n-1} D'(\sig^n \om)e^{j(\lam_2+\ep)}
|(\mcl_{\sig^{n-j-1}\om}-\mcl_{k,\sig^{n-j-1}\om}) \mcl_{k,\om}^{(n-j-1)}f| \\
&\leq
\sum_{j=0}^{n-1} D'(\sig^n \om)e^{j(\lam_2+\ep)} \tau_k \| \mcl_{k,\om}^{(n-j-1)}f\|.
\end{split}
\end{equation}
We let $A, C'$ be the tempered functions from the proof of Lemma~\ref{lem:PowerBoundStrongNorm}.
Thus, we have
\begin{align*}
\| \mcl_{k,\om}^{(n-j-1)}f\|\leq \al_{\om}^{(n-j-1)}\|f\|+C'(\sig^{n-j-1}\om)|f|.
\end{align*}
Substituting into \eqref{eq:uniq1}, we get
\begin{equation}\label{eq:uniq2}
 \begin{split}
 |\mcl^{(n)}_\om f -\mcl_{k,\om}^{(n)}f| &\leq 
D'(\sig^n \om) \tau_k \sum_{j=0}^{n-1} e^{j(\lam_2+\ep)} \Big( \al_{\om}^{(n-j-1)}\|f\|+C'(\sig^{n-j-1}\om)|f| \Big)\\
&\leq D'(\sig^n \om) \tau_k \sum_{j=1}^{n} e^{j(\lam_2+\ep)}
\Big( A(\om)e^{(\ka+\ep)(n-j)}\|f\|+\tld{C}(\sig^{n}\om)e^{2\ep j} |f| \Big),
\end{split}
\end{equation}
where $\tld{C}$ is such that for every $l\in \Z$, $C'(\sig^l \om)\leq \tld{C}(\om) e^{2\ep|l|}$.

Now, since $\mc{R}_k$ splits, $\dim E_{k,1}(\om)<\infty$. In particular, there exists a measurable function $J_k(\om)>0$ such that
$\sup_{f\in E_{k,1}(\om)\setminus \{0\}} \frac{\|f\|}{|f|}\leq J_k(\om)$.
Let $M_{J_k}$ be such that $\bbp(\{\om: J_k(\om)<M_{J_k}\})>0.9$, and let $G_{J_k}=\{\om: J_k(\om)<M_{J_k}\}$.
Thus, if $\om \in G_{J_k}$,  and $f\in E_{k,1}(\om)$,
\begin{align}\label{eq:contWeakNorm3}
 |\mcl^{(n)}_\om f -\mcl_{k,\om}^{(n)}f| &\leq \tau_k D''(\sig^n \om) A'(\om) \big(n e^{(\lam_2+\ep)n} M_{J_k}+ 1 \big)|f|,
\end{align}
where $D''= D' \max (1, \tld{C}),$ and $A'=\max(A,\frac{1}{1-e^{\lam_2+3\ep}})$, where we assume $\ep$ is such that $\lam_2+3\ep<0$.

Let $M_A, M_{D}$ be such that $\bbp(\{\om: A'(\om)<M_A\})>0.9$ and $\bbp(\{\om: D''(\om)<M_D\})>0.9$. Let $G_A=\{\om: A'(\om)<M_A\}$, $G_D=\{\om: D''(\om)<M_D\}$.
Thus, if $k$ is sufficiently large, there exists $N=N_k$ such that if $n\geq N$, $\om \in G_A\cap G_{J_k}$\footnote{Let us observe that the intersection is non-empty by the choice of $M_{J_k}$ and $M_A$.}, $\sig^n \om \in G_D$, and $f\in E_{k,1}(\om)$, we have
\begin{equation}\label{eq:contWeakNorm1}
|\mcl^{(n)}_\om f -\mcl_{k,\om}^{(n)}f| \leq \tfrac{1}{4}|f|.
\end{equation}

Now suppose also $\int f \,dm=0$.
Then, the assumptions on $\mc{R}$ show that there exists $N'=N'(\om)$ such that for every $n\geq N'$, 
$|\mcl^{(n)}_\om f|<\tfrac{1}{4}|f|$.
Combining with \eqref{eq:contWeakNorm1}, we get that there exists $\tld{N}=\tld{N}_k(\om)$ such that if $n\geq \tld{N}$,
$\om \in G_A\cap G_{J_k}$, $\sig^n \om \in G_D$, then 
\begin{equation}\label{eq:contWeakNorm2}
|\mcl_{k,\om}^{(n)}f| \leq \tfrac{1}{2}|f|,
\end{equation}
for every $f\in E_{k,1}(\om)$ with $\int f \,dm=0$.

Now, suppose for a contradiction that $d_{k,1}:=\dim E_{k,1}>1$ (recall that $\dim E_{k,1}$ is $\bbp$ almost everywhere constant). 
We can find a constant $M_{\tld{N}_k}$ such that the set
$$G_k:=\big\{\om\in \Om: J_k(\om)<M_{J_k},\ A'(\om)< M_A,\ D''(\om)< M_D,\ \tld{N}_k(\om)< M_{\tld{N}_k},\ \dim E_{k,1}(\om)=d_{k,1} \big\}$$ has positive $\bbp$-measure.

Then, by Birkhoff's ergodic theorem, for each $\ep>0$ there is a subset $G'_k\subset G_k$ of full $\bbp$-measure in $G_k$, such that for each $\om \in G'_k$, there exists $N_0=N_0(\om)$ such that for every $n\geq N_0$,
there exists a sequence $0= n_0<n_1<n_2<\dots n_l \leq n$, with $l\geq \frac{n(1-\ep)\bbp(G_k)}{\tld{N}_k}$, such that
(i) for every $0\leq j \leq l$, $\sig^{n_j}\om \in G_k$, and 
(ii) for every $0\leq j < l$, $n_{j+1}-n_{j}\geq \tld{N}_k$.
The $\tld{N}_k$ in the denominator comes from condition (ii), as we may have to choose one out of each $\tld{N}_k$ visits to $G_k$ to ensure this, in the worst-case scenario.

Let $\om \in G_k'$, and $f\in E_{k,1}(\om)$ be such that $\int f \,dm=0$ and $|f|=1$.
Using \eqref{eq:contWeakNorm2} at times $n_1, \dots, n_l$, we get that 
$|\mcl_{k,\om}^{(n_l)}f| \leq \frac{1}{2^l}$,
so $\frac{1}{n_l}\log |\mcl_{k,\om}^{(n_l)}f| \leq - \frac{l}{n_l} \log 2 \leq - \frac{(1-\ep)\bbp(G'_k)}{\tld{N}_k} \log 2$. Thus, $\liminf_{n\to \infty}\frac{1}{n}\log |\mcl_{k,\om}^{(n)}f|<0$.
Since $f\in E_{k,1}(\om)$, this contradicts the fact that $\lam_{k,1}=0$, because
\cite[Theorem 3.3]{FroylandStancevic} shows that $\bbp$-almost surely, $\liminf_{n\to \infty}\frac{1}{n}\log |\mcl_{k,\om}^{(n)}f|=\lam_{k,1}$. That is, the Lyapunov exponents of $f$ with respect to strong and weak norms coincide.

Therefore, $\dim(E_{k,1})=1$ for every sufficiently large $k$.
Existence and uniqueness of a random acim for $\mc{R}_k$ follow from the previous step.

\item
\textit{Fibrewise convergence of random acims.}\\
For each sufficiently large $k$, let $F_k$ be the unique random acim guaranteed by the previous step.
Let
\begin{align*}
R_{k,n, \om}:=(\mcl^{(n)}_{\sig^{-n} \om} - \mcl^{(n)}_{k,\sig^{-n} \om}) 1.
\end{align*}
We keep the notation as above.
Starting from Equation~\eqref{eq:uniq2} and arguing as in the previous step, we get
\begin{align*}
|R_{k,n,\om}|&\leq \tau_k D''(\om) A'(\sig^{-n}\om) K \|1\|,
\end{align*}
where $K:=\max_{n\in \N} \big(n e^{(\lam_2+\ep)n} + 1 \big)$.

Thus, whenever $\om, \sig^{-n} \om \in G:=G_A\cap G_D$,
\begin{align}\label{eq:boundRem}
|R_{k,n,\om}|&\leq \tau_k M_D M_A K \|1\|.
\end{align}
Since $\bbp(G)>0$, we may choose $G'\subset G$ with $\bbp(G')>0$ such that for every $\om \in G'$, there exists an infinite sequence $(n_j)$ such that $\sig^{-n_j}\om \in G$, by the Poincar\'e recurrence theorem.
Recalling that $\lim_{k\to\infty}\tau_k=0$, \eqref{eq:boundRem} shows that for every $\om \in G'$,
 \begin{align*}
\lim_{k\to \infty}|R_{k,n_j,\om}|= 0, \text{uniformly in }j.
\end{align*}

Furthermore, as we showed in steps (I) and (II), $\mathcal
L^{(n)}_{\sigma^{-n}\omega}1$ converges to $f_\omega$ as $n\to\infty$ and for
sufficiently large $k$, $\mathcal L^{(n)}_{k,\sigma^{-n}\omega}1$ converges to
$f_{k,\omega}$.
Hence, all the following limits exist (in $|\cdot|$) and 
 \begin{align*}
f_\om-f_{k,\om}=\lim_{n\to \infty} \mcl_{\sig^{-n}\om}^{(n)} 1 - \mcl_{k,\sig^{-n}\om}^{(n)} 1=\lim_{n\to \infty} R_{k,n,\om}.
\end{align*}
Therefore, for every  $\om \in G'$, $\lim_{k\to \infty}|f_\om-f_{k,\om}|=\lim_{k\to \infty} \lim_{j\to \infty} |R_{k,n_j,\om}|=0$.
Since the set of $\om$ for which $\lim_{k\to \infty}f_{k,\om}= f_\om$ is $\sig$-invariant, ergodicity of $\sig$ yields fibrewise convergence for $\paeom$, as claimed.\qed
\end{enumerate}
\end{proof}

\section{Examples}\label{sec:Ex}
The first subsection, \S\ref{S:LYMaps}, is devoted to introducing the setting of random Lasota-Yorke maps.
As the hypotheses of the stability theorem of \S\ref{S:StabilityResult} do not hold in the classical setting of functions of bounded variation, the alternative norms used, as well as some relevant properties thereof, are presented.

Three applications of the stability theorem in the context of random Lasota-Yorke maps are presented in sections \S\ref{Ssec:Ulam}--\S\ref{Ssec:DetPert}. These correspond to Ulam approximations, perturbations with additional randomness that arise by taking convolution with non-negative kernels, and static perturbations, 
respectively. 
In \S\ref{sec:numEx}, we illustrate the results with a numerical example.

\subsection{Setting: Random (non-autonomous) Lasota-Yorke maps}\label{S:LYMaps}
For $0<\ga\leq 1$, let $\LY^\ga$ be the space of finite-branched piecewise $C^{1+\ga}$ expanding maps of the interval.
For each $T\in \LY^\ga$, let $\nint^T$ is the number of branches of $T$, and $\{ I_i^T\}_{1\leq i \leq \nint^T }$ the continuity intervals of $T$ and $DT$. 
We recall the definition of the metric $d_{\LY}$ on the space of random Lasota-Yorke maps $\LY^{1+\ga}$ used in \cite{GTQuas}
\footnote{For convenience, the $\ga$ dependence of the distance does not explicitly appear in the notation. We think of $\ga$ as fixed.}. 
Let $S,T\in\LY^{1+\ga}$. Let the branches for $S$ be
$(I^S_i)_{i=1}^{\nint^S}$ and for $T$ be $(I^T_i)_{i=1}^{\nint^T}$, considered
as an \emph{ordered} collection. If $\nint^T\ne \nint^S$,
or $I^S_i\cap I^T_i=\emptyset$ for some $i$, we define $d_{\LY}(S,T)=1$.
Otherwise we define
\[
d_{\LY}(S,T)=
   \max_i \|(S_i-T_i)\vert_{I_i^S\cap I_i^T}\|_{C^{1+\ga}} + \max_i\Big | \|S_i \|_{C^{1+\ga}}-  \|T_i \|_{C^{1+\ga}} \Big| +\max_i
   d_H(I_i^S, I_i^T),
\]
where $d_H$ denotes Hausdorff distance. We endow $\LY^{1+\ga}$ with the corresponding Borel
$\sigma$-algebra $\mc{B}_{LY}$.

\begin{defn}
Let $(\Om,\mc{F})$ be a measurable space.
A \textbf{random Lasota-Yorke map} $\mc{T}$ is a measurable function $\mc{T}: (\Om, \mc{F}) \to (\LY^\ga, \mc{B}_{LY})$.
\end{defn}
\begin{rmk}
A random Lasota-Yorke map can be made into a random dynamical system by fixing a separable Banach space $X$ on which transfer operators $\mcl_{\om}$ associated to $\mc{T}(\om)=:T_\om$ are bounded linear maps. 
\end{rmk}

Let $\mc{T}$ be a random Lasota-Yorke map.
Also, assume there exist uniform bounds $\nint, \dist, \mu$ as follows
\begin{enumerate}
 \renewcommand{\theenumi}{M\arabic{enumi}} 
 \renewcommand{\labelenumi}{(\theenumi)}
\item \label{it:M1} $\nint_\om\leq \nint$, where $\nint_\om$ is the cardinality of the partition of $\dom=\{I_{1,\om}, \dots, I_{\nint_\om, \om}\}$ into domains of differentiability of $T_\om$,
\item \label{it:M2} $\|T_{i,\om}^{\pm 1}\|_{C^{1+\ga}}\leq \dist$,
\item \label{it:M3} $\essinf_{\om\in\Om,x\in \dom}|(DT_\om)_x|>\mu>1$.
\end{enumerate}
\begin{rmk}\label{rmk:UnifBdNorm}
It follows from \cite[\S4]{BaladiGouezel} (see also \cite{Thomine} or \cite[\S3]{GTQuas}) that 
conditions \eqref{it:M1}--\eqref{it:M3} yield $\esssup_{\om\in  \Om} \|\mcl_\om\|<\infty$.
\end{rmk}

Furthermore, assume $\mc{T}$ enjoys Buzzi's \textit{random covering condition} \cite{BuzziEDC}.
That is, assume that for every non-trivial interval $J\subset I$ and $\paeom$, there exists some $n\in \N$ such that $T_\om^{(n)}(J)=I \pmod{0}$.
This ensures $\dim(E_1)=1$ \cite{BuzziEDC}, as required by Theorem~\ref{thm:StabRandomAcim}.
Let us call $F$ the unique random acim of $\mc{T}$.

Next, let us observe that conditions \eqref{it:M1}--\eqref{it:M3} above yield, for each $N\in \N$, a Lasota-Yorke inequality 
for the norms $BV, L^1$ of the form:
\begin{equation}\label{eq:L1BV-LY-N}
 \|\mcl_{\om}^{(N)} f\|_{BV}\leq \al_N(\om) \|f\|_{BV}+B_N(\om)\|f\|_1,
\end{equation}
for $\paeom$.
Furthermore, the work of Rychlik \cite{Rychlik} shows that if $N$ is sufficiently large (depending on the constants from \eqref{it:M1}--\eqref{it:M3}), one can ensure $\int \log \al_N(\om)\, d\bbp<0$.
We will assume the above holds for $N=1$, and discuss the necessary modifications needed to cover the case $N>1$ separately in each subsection. That is, we assume for $\paeom$,
\begin{equation}\label{eq:L1BV-LY}
 \|\mcl_{\om} f\|_{BV}\leq \al(\om) \|f\|_{BV}+B(\om)\|f\|_1,
\end{equation}
with $\int \log \al(\om)\, d\bbp<0$.

Also, since $\mcl_{\om}$ preserves the non-negative cone and $\int f =\int \mcl_\om f \,dm$, then
$\|\mcl_{\om}^{(n)}\|_1\leq 1$ for $\paeom$ and $n\in \N$.
Thus, Lemma~\ref{lem:PowerBoundStrongNorm} ensures $\sup_{n\in \N}\|\mcl_{\sig^{-n}\om}^{(n)}\|_{BV}$ is tempered with respect to $\sig$.
Further,
\begin{equation}\label{eq:LinftyBd}
  \|\mcl_{\sig^{-n}\om}^{(n)} f\|_\infty\leq \|\mcl_{\sig^{-n}\om}^{(n)} 1\|_\infty \|f\|_\infty\leq \|\mcl_{\sig^{-n}\om}^{(n)} 1\|_{BV} \|f\|_\infty,
\end{equation}
which ensures $\sup_{n\in \N}\|\mcl_{\sig^{-n}\om}^{(n)}\|_{\infty}$ is tempered as well.
 The Riesz-Thorin interpolation theorem (see e.g. \cite[VI.10.12]{DunfordSchwartz}) then yields temperedness of $\sup_{n\in \N}\|\mcl_{\sig^{-n}\om}^{(n)}\|_{p}$ for every $1\leq p \leq \infty$.

It follows from \cite[\S3]{GTQuas}, which in turn builds on \cite{BaladiGouezel}, that there exist
$t,p\in \R$, with 
\[
0<t<\tfrac{1}{p}<1,
\] 
such that $\mc{T}$ splits with respect to $\hpt$, where $\hpt$ is the subset of the fractional Sobolev space with parameters $p$ and $t$, whose support lies inside the interval $I$. 
That is, 
\[
\hpt= \mc{F}^{-1}(m_{-t} \mc{F}(L_p)) \cap \{f\in L_p: \spt(f)\subset I\},
\]
where $\mc{F}$ denotes the Fourier transform, and $m_t(\zeta):=
(1+|\zeta|^2)^{\frac{t}{2}}$.
The norm on $\hpt$ is given by
\[
\|f\|_{\hpt}:=\|\mc{F}^{-1}(m_t \mc{F}(f))\|_{p}.
\]
The specific choice of parameters $p$ and $t$ depends on the random map. 
In general, $p>1$ may need to be chosen arbitrarily close to 1. Also, $0<t<\frac{1}{p}$ is a smoothness parameter, and it can not be larger than the smoothness of the (derivative of the) maps. 

It was also shown in \cite[\S3]{GTQuas} that there are
constants $\tld{N} \in \N, \tld{\al}_{\tld{N}}(\om), \tld{B}_{\tld{N}}(\om)>0$ such that for every $f\in \hpt$, and $\paeom$,
\begin{equation}\label{eq:LY-hpt-lp}
 \|\mcl_{\om}^{\tld{N}}f\|_{\hpt} \leq \tld{\al}_{\tld{N}}(\om) \|f\|_{\hpt}+\tld{B}_{\tld{N}}(\om)\|f\|_p,
\end{equation}
with $\int \log\tld{\al}_{\tld{N}}(\om)\, d\bbp<0$.
From temperedness of  $\sup_{n\in \N}\|\mcl_{\sig^{-n}\om}^{(n)}\|_{p}$, a second application of Lemma~\ref{lem:PowerBoundStrongNorm} ensures $\sup_{n\in \N}\|\mcl_{\sig^{-n}\om}^{(n)}\|_{\hpt}$ is tempered with respect to $\sig$.

Finally, we fix our attention on yet another pair of norms, which will be the pair for which we use the results of \S\ref{S:StabilityResult}.
This choice is motivated by the necessity of a splitting for $\mc{T}$ with respect to the weak norm, coming from Theorem~\ref{thm:StabRandomAcim}.
Let 
$t$ and $p$ be as above, and let
\[
0<t'<t,
\] 
be sufficiently close to $t$ so that $\mc{T}$ splits with respect to $\hptt$ (by
the same argument as above).

The $\hpt$ norm is stronger than $\hptt$, and the embedding $\hpt\hookrightarrow \hptt$ is compact.
Also, the $\hptt$ norm is stronger than $L_p$, so \eqref{eq:LY-hpt-lp} implies
\[
 \|\mcl_{\om}^{\tld{N}}f\|_{\hpt} \leq \tld{\al}_{\tld{N}}(\om) \|f\|_{\hpt}+\tld{B}_{\tld{N}}(\om)
\|f\|_{\hptt},
\]
for every $f\in X$, and $\paeom$, with $\int \log\tld{\al}_{\tld{N}}(\om)\, d\bbp<0$.

As above, we will assume the above holds for $\tld{N}=1$,
and discuss the case $\tld{N}>1$ separately in each section.

We conclude this section with the following remark, 
which will allow us to work interchangeably on either $(\hpt, \hptt)$, or the corresponding fractional Sobolev spaces on the circle, $(\hptorus, {H}_p^{t'}(\T))$.
\begin{rmk}\label{rmk:EquivHptNorms}
Let $\hptorus$ be the fractional Sobolev space on the circle. That is, 
\begin{equation}\label{def:hptorus}
 \hptorus=\big\{ f \in D'(\T): \|f\|_{\hptorus}:= \big\|\sum_{j\in \Z}\langle j\rangle^{t/2} \hat{f}(j) e^{2\pi ijx} \big\|_p  <\infty \big\},
\end{equation}
where $\langle j \rangle := 1+j^2$ and $D'(\T)$ is the dual of the space of $C^\infty$ functions on $\T$.

Then, if $0<t<\frac{1}{p}$, $\|\cdot\|_{\hptorus}$
and $\|\cdot\|_{\hpt}$ are equivalent norms \cite[\S4.3.2 \& \S4.11.1]{TriebelInterpolation}.
\end{rmk}

\subsubsection{Approximation by smooth functions}\label{S:approxSmoothFunctions}
Let $f\in \hpt$ and $\ep>0$. Set
\begin{equation}\label{eq:fep}
 f_\ep:=\sum_{j\in \Z} e^{-\ep (1+(2\pi j)^2)} a_j \phi_j(x),
\end{equation}
where $\phi_j(x)=e^{2\pi i j x}$ and $a_j:=\int_{0}^1 \phi_j(y)f(y)\, dy$.
Then, $f_\ep \in C^\infty$. Since $|a_j|\leq \|f\|_1$, a direct calculation shows
\begin{equation}\label{eq:BdFepC2}
 \|f_\ep\|_{C^2}\leq \sum_{j=1}^\infty e^{-\ep (1+(2\pi j)^2)} (2\pi j)^2\|f\|_1\leq C(\ep)\|f\|_{\hpt},
\end{equation}
for some decreasing function $C:(0,\infty)\to \R_+$ such that $\lim_{\ep\to 0^+} C(\ep)=\infty$.

\begin{conv}
For the remainder of the section, whenever $\|\cdot\|$ and  $|\cdot|$ appear, they stand for  $\|\cdot\|_{\hpt}$ and $\|\cdot\|_{\hptt}$, respectively. In particular, whenever the hypotheses of Theorem~\ref{thm:StabRandomAcim} are verified, it is meant with respect to this pair of norms.
\end{conv}

We will make use of the following approximation lemma, whose proof is deferred until \S\ref{pf:regApprox}.
\begin{lem}\label{lem:regApprox}
Let $f\in \hpt$. Then,
$|f_\ep-f| \leq C_\# \ep^{\frac{t-t'}{2}} \|f\|$.
\end{lem}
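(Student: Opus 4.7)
The plan is to express $f - f_\ep$ as a Fourier multiplier applied to $f$, and then bound the multiplier's $L^p$ operator norm by $C_\#\,\ep^{(t-t')/2}$.

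By Remark~\ref{rmk:EquivHptNorms}, we may work throughout with the torus norms $\|\cdot\|_{\hptorus}$ and $\|\cdot\|_{{H}_p^{t'}(\T)}$. Since $\{\phi_j\}_{j\in\Z}$ is the Fourier basis of $\T$ and the weight $e^{-\ep(1+(2\pi j)^2)}$ is even in $j$, we have
\[
f - f_\ep = \sum_{j\in\Z}\bigl(1 - e^{-\ep(1+(2\pi j)^2)}\bigr)\hat{f}(j)\, e^{2\pi i j x}.
\]
Writing $s := t - t' \in (0,1)$, the Sobolev norm representation gives
\[
|f - f_\ep| \sim \|T_{m_\ep}\, g\|_p,\qquad m_\ep(j) := \langle j\rangle^{-s/2}\bigl(1 - e^{-\ep(1+(2\pi j)^2)}\bigr),
\]
where $T_{m_\ep}$ is the Fourier multiplier on $\T$ with symbol $m_\ep$, and $g$ is the function with Fourier coefficients $\langle j\rangle^{t/2}\hat{f}(j)$, so that $\|g\|_p \sim \|f\|$. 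Hence it suffices to show $\|T_{m_\ep}\|_{L^p\to L^p}\leq C_\#\,\ep^{s/2}$.

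For the symbol bound, the elementary inequalities $1 - e^{-u}\leq\min(1,u)$ and $\min(1,u)\leq u^{s/2}$ for $u\geq 0$ (the latter valid since $s/2\in(0,1)$), combined with $1+(2\pi j)^2\leq 4\pi^2\langle j\rangle$, yield the uniform pointwise estimate
\[
|m_\ep(j)| \leq \langle j\rangle^{-s/2}\bigl(4\pi^2\,\ep\,\langle j\rangle\bigr)^{s/2} = C_\#\,\ep^{s/2}.
\]
For $p=2$ this already closes the case, via Plancherel. For general $p\in(1,\infty)$ I would upgrade to an $L^p$ multiplier estimate via the Mihlin--H\"ormander theorem on $\T$: writing $m_\ep(\xi) = \ep^{s/2}\,\phi\bigl(\ep(1+(2\pi\xi)^2)\bigr)\,\psi(\xi)$ with $\phi(u) := u^{-s/2}(1-e^{-u})$ smooth on $(0,\infty)$ satisfying $|u^k\phi^{(k)}(u)|\leq C_k$ for all $k\geq 0$, and with $\psi(\xi) := (1+(2\pi\xi)^2)^{s/2}\langle\xi\rangle^{-s/2}$ uniformly bounded together with all derivatives $|\xi|^k|\partial_\xi^k\psi|$, a chain rule calculation verifies the Mihlin derivative conditions for $m_\ep$ uniformly in $\ep$, with all relevant constants scaled by $\ep^{s/2}$.

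The main obstacle is precisely this transition from a pointwise symbol bound to an $L^p$ operator bound when $p\neq 2$. An alternative that avoids differentiating the symbol is the semigroup identity
\[
(1-\Delta)^{-s/2}\bigl(I - e^{-\ep(1-\Delta)}\bigr) = \int_0^\ep (1-\Delta)^{1-s/2}\, e^{-r(1-\Delta)}\, dr,
\]
combined with the standard analytic semigroup estimate $\|(1-\Delta)^{1-s/2}\, e^{-r(1-\Delta)}\|_{L^p\to L^p}\leq C\, r^{-(1-s/2)}$ (valid for $1<p<\infty$ since $-(1-\Delta)$ generates an analytic semigroup on $L^p(\T)$), which integrates over $r\in[0,\ep]$ to give the desired bound $C_\#\,\ep^{s/2}$.
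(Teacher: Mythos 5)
Your proposal is correct, but it takes a genuinely different route from the paper. Both you and the paper reduce the lemma to an $L^p$ Fourier-multiplier bound for the symbol $\langle j\rangle^{-(t-t')/2}\bigl(1-e^{-\ep(1+(2\pi j)^2)}\bigr)$, with $\|\cdot\|_{\hpt}$ and $\|\cdot\|_{\hptt}$ identified with the torus norms of Remark~\ref{rmk:EquivHptNorms}; and both observe the pointwise bound $\ep^{(t-t')/2}$ on the symbol. The difference is entirely in how the pointwise symbol bound is upgraded to an $L^p\to L^p$ operator bound for $p\ne 2$. The paper does this with a self-contained, one-variable Marcinkiewicz-type lemma (Lemma~\ref{lem:Lpbound} and Corollary~\ref{cor:FourierMultiplier}): any symbol $b$ of bounded variation vanishing at $\pm\infty$ gives $\|M_b\|_p\le C_p\var(b)$, proved by writing $b$ as a sum of indicators of discrete intervals and invoking uniform $L^p$ boundedness of partial Fourier sums; specializing to piecewise-monotone $b$ with at most $K$ pieces gives $\|M_b\|_p\le C_pK\|b\|_\infty$, and then the observation that $x\mapsto x^{-\gamma}(1-e^{-\ep x})$ ($\gamma=(t-t')/2$) has two monotonicity pieces and sup norm $\le\ep^{\gamma}$ finishes the argument. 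You instead invoke the Mihlin--H\"ormander theorem on $\T$ (transferred from $\R$), or alternatively the identity $A^{-s/2}(I-e^{-\ep A})=\int_0^\ep A^{1-s/2}e^{-rA}\,dr$ together with the analytic semigroup estimate $\|A^{1-s/2}e^{-rA}\|_{L^p\to L^p}\lesssim r^{-(1-s/2)}$ for $A=1-\Delta$. Both of your routes are sound. The Mihlin route you only sketch; the chain-rule verification that the derivative conditions hold uniformly in $\ep$ with constants scaled by $\ep^{s/2}$ is believable but would need to be carried out, and the transference to the torus should be cited. The semigroup route is cleaner and fully specified; the only small omission there is that you handle the mismatch between $\langle j\rangle^{-s/2}=(1+j^2)^{-s/2}$ (the paper's Bessel weight) and $(1+(2\pi j)^2)^{-s/2}$ (the symbol of $(1-\Delta)^{-s/2}$) explicitly in the Mihlin route via the auxiliary factor $\psi$, but leave it implicit in the semigroup route. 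That factor is a fixed bounded $L^p$ multiplier independent of $\ep$, so it is harmless, but it should be acknowledged. Overall, the paper's bounded-variation lemma is more elementary and self-contained (it relies on nothing beyond uniform boundedness of Fourier partial sums, and the same lemma is reused elsewhere in the paper, e.g.\ in \S\ref{Ssec:PertByConvolution}), whereas your approaches leverage standard but heavier machinery and would shorten the exposition if those black boxes are taken as known.
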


\subsection{The Ulam scheme}\label{Ssec:Ulam}
For each $k\in \N$, let  $\mc{P}_k=\{B_1, \dots, B_k\}$ be a partition of $\dom$ into $k$ subintervals of uniform length, called bins.
Let $\E_k$ be given by the formula 
\[
\E_k(f)=\sum_{j=1}^k \frac{1}{m(B_j)}\Big(\int 1_{B_j}\,f\,\,dm\, \Big)1_{B_j},
\]
where $m$ denotes normalized Lebesgue measure on $\dom$.

Let $\mcl_k$ be defined as follows.
For each $\om \in \Om$, $\mcl_{k,\om}:=\E_k \mcl_\om$.
This is the well-known Ulam discretization \cite{Ulam}, in the context of non-autonomous systems. It provides a way of approximating the transfer operator $\mcl$ with a sequence of (fibrewise) finite-rank operators $\mcl_k$, each
taking values on functions that are constant on each bin.

It is well known that $\E_k$ is a contraction in $L_p$. For $\hpt$ norms, we are not aware of any similar results in the literature. Here we establish the following, which may be of independent interest. 
\begin{lem}\label{thm:hptbound}
Let $p>1$ and let $0<t<1/p$. There exists a constant $C_\#$
such that $\|\E_k f\| \le C_\#\|f\|$ for all $k\ge 1$ and all $f\in \hpt$\footnote{We remind the reader that $C_\#$ may depend on parameters $p,t$, and that $\|\cdot\|$ denotes $\|\cdot\|_{\hpt}$ throughout this section.}.
\end{lem}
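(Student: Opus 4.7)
The plan is to reduce to the torus using Remark~\ref{rmk:EquivHptNorms}, and then to use an equivalent characterization of $\hptorus$ in terms of the $L^p$-norm plus a Slobodeckij-type double integral of differences. This is a natural fit, since $\E_k f$ is piecewise constant and its smoothness is controlled by the jumps across bin boundaries.

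The $L^p$-bound $\|\E_k f\|_p \le \|f\|_p$ is immediate from Jensen's inequality. The core of the argument is to bound the seminorm $[g]_{t,p}^p := \iint_{\T\times\T} |g(x)-g(y)|^p / d_{\T}(x,y)^{1+tp}\, dx\, dy$, applied to $g = \E_k f$, in terms of $[f]_{t,p}^p$. Split the double integral according to which bins $B_i,B_j$ contain the points $x,y$: the contribution from $i=j$ vanishes since $\E_k f$ is constant on each bin. For $|i-j|\ge 2$, all the relevant distances $|x-y|$ and $|u-v|$ for $(x,y),(u,v)\in B_i\times B_j$ are of order $|i-j|/k$ and hence mutually comparable; using Jensen to bound $|\E_k f(x)-\E_k f(y)|^p = |c_i - c_j|^p$ (with $c_i$ the conditional average of $f$ on $B_i$) by an average of $|f(u)-f(v)|^p$, and invoking this comparability of distances, bounds the far-bin contribution by a constant times $\iint_{B_i\times B_j} |f(u)-f(v)|^p/|u-v|^{1+tp}\,du\,dv$, which sums to $\le C[f]_{t,p}^p$.

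The delicate case, where the hypothesis $tp<1$ is essential, is the adjacent-bin case $|i-j|=1$. Here $|\E_k f(x)-\E_k f(y)|=|c_i-c_{i+1}|$ is a constant while $|x-y|$ can be arbitrarily small. A direct computation shows $\iint_{B_i\times B_{i+1}} |x-y|^{-1-tp}\,dx\,dy \sim k^{tp-1}$; the integrability requires precisely $tp<1$. Meanwhile, Jensen gives $|c_i-c_{i+1}|^p \le k^2\iint_{B_i\times B_{i+1}}|f(u)-f(v)|^p\,du\,dv$, and since $|u-v|\le 2/k$ one may insert $|u-v|^{-1-tp} \ge C_\# k^{1+tp}$ into the integrand at the cost of $k^{-1-tp}$. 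The surplus powers of $k$ cancel precisely, and summation over adjacent pairs again gives $\le C_\#[f]_{t,p}^p$.

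The main obstacle I anticipate is technical rather than conceptual: $\hptorus$ is the Bessel potential space $F^t_{p,2}(\T)$, whereas the Slobodeckij double-integral seminorm characterizes $W^{t,p}(\T) = F^t_{p,p}(\T)$, and these coincide only at $p=2$. To actually prove the lemma as stated one needs either to rerun the estimates against a genuine $H^t_p$ characterization (for instance a Triebel-Lizorkin squared-difference characterization, which replaces the $L^p$-integrand by an $\ell^2$-sum over dyadic scales but preserves the structure of the bin-splitting argument), or to deduce the $H^t_p$-bound from the Slobodeckij-bound by interpolating between the trivial $L^p$-boundedness of $\E_k$ and boundedness in a nearby higher-smoothness space. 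In either case the computational heart of the argument -- the cancellation of $k$-powers in the adjacent-bin estimate, hinging on $tp<1$ -- carries over unchanged.
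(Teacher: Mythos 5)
Your bin-splitting argument is clean and the core computation is sound, but as you yourself identify in the last paragraph, it proves boundedness on the Sobolev--Slobodeckij space $W^{t,p}(\T)=B^{t}_{p,p}(\T)$, whereas the lemma concerns the Bessel-potential space $\hptorus = F^t_{p,2}(\T)$; these differ for every $p\neq 2$ (the paper's $p$ is close to $1$), so the proof as written establishes the wrong statement. The gap is genuine, and only one of your two proposed repairs actually closes it. The interpolation route does not: real interpolation of $L^p$ with $W^{s,p}$ lands back in the Besov scale, $(L^p,W^{s,p})_{\theta,q}=B^{\theta s}_{p,q}$, and since $H^t_p=F^t_{p,2}$ sits strictly between $B^t_{p,\min(p,2)}$ and $B^t_{p,\max(p,2)}$ for $p\neq 2$, uniform bounds on all $B^t_{p,q}$ do not transfer to $H^t_p$; there is no standard interpolation functor taking the pair $(L^p,W^{s,p})$ into the Triebel--Lizorkin scale with second index $2$.

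Your first repair -- using a genuine $H^t_p$ characterization built on an $\ell^2$-in-scale average of differences -- is the route the paper takes, via Strichartz's characterization $\|f\|_{\hpt}\approx\|f\|_p+\|S_tf\|_p$ with $S_tf(x)=\bigl(\int_0^\infty r^{-1-2t}\,(\int_{-1}^1|f(x+ry)-f(x)|\,dy)^2\,dr\bigr)^{1/2}$. Note however that your bin-pair decomposition of $\T\times\T$ does not carry over mechanically: in $S_t$ the inner $L^2(dr/r)$ integral over scales is taken before the outer $L^p$ integral over $x$, so one cannot freely reorganize by bin pairs. The paper instead splits by scale, $[0,\tfrac1{2k}]$ versus $[\tfrac1{2k},\infty)$, which respects the order of integration: at small scales $S_t^{(1)}(\E_kf)(x)$ vanishes unless $x$ is within distance $r$ of a bin boundary (the analogue of your same-bin cancellation), and at large scales one compares $\E_kf$ to $f$ via the approximation estimate $k^{pt}\|\E_kf-f\|_p^p\lesssim\|S_tf\|_p^p$. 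The cancellation of powers of $k$ and the reliance on $tp<1$ for the $r$-integral near zero do appear in essentially the form you anticipated, so the conceptual kernel of your argument survives, but the decomposition has to be reorganized around scales rather than bins before it applies.
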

The proof of Lemma~\ref{thm:hptbound} is deferred until \S\ref{S:pfBddEnHpt}.
We can therefore obtain a uniform inequality like \eqref{eq:UnifLY}, provided the expansion of $T_\om$ is sufficiently strong. 

\begin{thm}\label{thm:Ulam}
Let $\mcl$ be a non-autonomous Lasota-Yorke map, as defined in \S\ref{S:LYMaps}.
For each $k\in \N$, let $\mcl_k$ be the sequence of Ulam discretizations, corresponding to the partition $\mc{P}_k=\{B_1, \dots, B_k\}$ introduced above.

Assume $\mcl$ satisfies the Lasota-Yorke inequalities \eqref{eq:L1BV-LY} and \eqref{eq:GenUnifLY} with 
$\al$ and $\tld{\al}$ such that $\int \log \al(\om)\, d\bbp<0$ and $\log C_\# + \int \log \tld{\al}(\om)\, d\bbp<0$, where $C_\#=\sup_{k\in \N} |\E_k|$, guaranteed to be finite by Lemma~\ref{thm:hptbound}\footnote{If the contraction condition of Theorem~\ref{thm:Ulam} requires taking either $N>1$  in \eqref{eq:L1BV-LY-N} and/or $\tld{N}>1$ in \eqref{eq:LY-hpt-lp}, the conclusions remain valid provided the projection $\E_k$ is taken after $\max\{N, \tld{N}\}$ compositions.}.

Then, for each sufficiently large $k$, $\mcl_k$ has a unique random acim.
Let $\{F_k\}_{k\in \N}$ be the sequence of random acims for $\mcl_k$. Then, $\lim_{k\to \infty}F_k=F$ fibrewise in $|\cdot|$.
\end{thm}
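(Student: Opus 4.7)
The plan is to verify hypotheses \eqref{it:TopExp}--\eqref{it:SmallPert} of Theorem~\ref{thm:StabRandomAcim} for the strong/weak pair $(\hpt,\hptt)$ and then invoke that theorem. Its abstract conclusion delivers exactly the desired statement once the Oseledets splittability of $\mc{R}$ with respect to $\hptt$ (guaranteed by \S\ref{S:LYMaps}), the fact $\dim E_1=1$ (from Buzzi's random covering assumption built into \S\ref{S:LYMaps}) and $1\in\hpt$ (automatic on the bounded interval $\dom$) are in hand.

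Conditions \eqref{it:TopExp} and \eqref{it:UnifLY} are essentially direct. $\E_k$ is a positive $L^1$-contraction preserving the integral, so the cone and integral-preservation requirements of \eqref{it:TopExp} transfer from $\mcl_\om$ to $\mcl_{k,\om}$; integrability of $\log^+\|\mcl_{k,\om}\|_{\hpt}$ follows from Remark~\ref{rmk:UnifBdNorm} together with Lemma~\ref{thm:hptbound}. For \eqref{it:UnifLY}, combining Lemma~\ref{thm:hptbound} with \eqref{eq:LY-hpt-lp} yields
\[
\|\mcl_{k,\om}f\|_{\hpt}=\|\E_k\mcl_\om f\|_{\hpt}\le C_\#\tld{\al}(\om)\|f\|_{\hpt}+C_\#\tld{B}(\om)\|f\|_{\hptt},
\]
after absorbing $\|f\|_p$ into $\|f\|_{\hptt}$; the assumption $\log C_\#+\int\log\tld{\al}\,d\bbp<0$ then turns this into the ``good'' inequality \eqref{it:GenUnifLY}, hence \eqref{it:UnifLY}. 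The footnote in the theorem statement handles the case when several compositions of $\mcl_\om$ must precede $\E_k$.

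For \eqref{it:PowBd}, I would construct a tempered bound on $\sup_{k,n}\|\mcl_{k,\sig^{-n}\om}^{(n)}\|_{\hptt\to\hptt}$ by interpolation. Since $\E_k$ does not increase BV (a classical property of the conditional expectation onto piecewise constants on $\mc{P}_k$), the BV inequality \eqref{eq:L1BV-LY} transfers to $\mcl_k$ with the same constants; Lemma~\ref{lem:PowerBoundStrongNorm} then yields a tempered bound on $\|\mcl_{k,\sig^{-n}\om}^{(n)}\|_{BV}$, uniformly in $k$. Combining $L^1$-contractivity with $BV\hookrightarrow L^\infty$ and Riesz--Thorin produces tempered $L^p$ bounds; applying Lemma~\ref{lem:PowerBoundStrongNorm} to \eqref{it:UnifLY} produces tempered $\hpt\to\hpt$ bounds; and complex interpolation $\hptt=[L^p,\hpt]_{t'/t}$ (cf.\ Remark~\ref{rmk:EquivHptNorms}) delivers the required tempered $\hptt\to\hptt$ bound.

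The main obstacle is \eqref{it:SmallPert}, which I would reduce to the quantitative estimate $\|I-\E_k\|_{\hpt\to\hptt}=:\tau_k\to 0$ (composition with $\mcl_\om$ only costs the uniform multiplicative constant of Remark~\ref{rmk:UnifBdNorm}). For $h$ in the unit ball of $\hpt$, I would split $h=h_\ep+(h-h_\ep)$ using the mollification \eqref{eq:fep}. For the tail, Lemma~\ref{lem:regApprox} together with Lemma~\ref{thm:hptbound} applied in $\hptt$ yields $\|(I-\E_k)(h-h_\ep)\|_{\hptt}\le C_\#\ep^{(t-t')/2}\|h\|_{\hpt}$. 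For the smooth piece, a bin-wise Poincar\'e estimate gives $\|(I-\E_k)h_\ep\|_p\le C_\# k^{-1}\|h_\ep\|_{C^2}$, while Lemma~\ref{thm:hptbound} and the embedding $C^2\hookrightarrow\hpt$ give $\|(I-\E_k)h_\ep\|_{\hpt}\le C_\#\|h_\ep\|_{C^2}$; bound \eqref{eq:BdFepC2} controls both by $C(\ep)\|h\|_{\hpt}$, and complex interpolation produces
\[
\|(I-\E_k)h_\ep\|_{\hptt}\le C_\# k^{-(1-t'/t)}C(\ep)\|h\|_{\hpt}.
\]
Choosing $\ep_k\to 0$ slowly enough that $C(\ep_k)k^{-(1-t'/t)}\to 0$ (feasible given the polynomial growth of $C$ in $1/\ep$ implicit in \eqref{eq:BdFepC2}) forces $\tau_k\to 0$. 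With all four hypotheses verified, Theorem~\ref{thm:StabRandomAcim} yields both the existence and uniqueness of $F_k$ for large $k$ and the fibrewise $\hptt$-convergence $F_k\to F$.
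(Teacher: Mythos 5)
Your overall structure — verifying \eqref{it:TopExp}--\eqref{it:SmallPert} for the pair $(\hpt,\hptt)$ and invoking Theorem~\ref{thm:StabRandomAcim} — matches the paper's proof, but your verification of \eqref{it:SmallPert} takes a genuinely different route. The paper begins from \eqref{eq:UlamError} with the same mollification $f\mapsto f_\ep$, but applies it \emph{before} $\mcl_\om$: since $\mcl_\om f_\ep$ is not continuous, the paper must then split $\mcl_\om f_\ep=(\mcl_\om f_\ep)^h+(\mcl_\om f_\ep)^s$ into a $C^\ga$ part (treated by the dedicated estimate Lemma~\ref{lem:UlamError4Holder}) and a step-function part (treated via the pointwise bound $|1_J|\le C_\# m(J)^{1/p-t'}$). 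Your argument instead factors $\mcl_\om$ out first via Remark~\ref{rmk:UnifBdNorm}, reducing \eqref{it:SmallPert} to the operator-norm estimate $\|I-\E_k\|_{\hpt\to\hptt}\to 0$, and then mollifies $h=\mcl_\om g$ \emph{after} the transfer operator has been applied. Because $h_\ep$ is then genuinely $C^\infty$, you can run a simple bin-wise Poincar\'e estimate in $L^p$ and interpolate with the uniform $\hpt$-boundedness of $\E_k$, sidestepping the discontinuity analysis entirely. This avoids Lemma~\ref{lem:UlamError4Holder} and removes the $\ga$-dependence from the exponent in $\tau_k$, at the price of invoking the complex interpolation identity $\hptt=[L^p,\hpt]_{t'/t}$ (which the paper does not explicitly use, though it is standard for Bessel-potential scales and consistent with Remark~\ref{rmk:EquivHptNorms}). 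Your treatment of \eqref{it:PowBd} via Riesz--Thorin plus interpolation down to $\hptt$ is also a slightly more explicit version of what the paper dismisses as ``exactly as in the bootstrapping argument''; the paper presumably intends the analogous $(\hptt,L_p)$ Lasota-Yorke inequality, but either route is sound. Both proofs use Lemma~\ref{thm:hptbound} and the mollification bounds of \S\ref{S:approxSmoothFunctions} in essentially the same way.
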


\begin{proof}
We will verify the assumptions of Theorem~\ref{thm:StabRandomAcim}. \eqref{it:TopExp} is immediate.
The assumptions combined with Lemma~\ref{thm:hptbound} ensure that \eqref{it:GenUnifLY} holds as well.
Condition~\eqref{it:PowBd} follows exactly as in the bootstrapping argument in \S\ref{S:LYMaps}.

The last condition to check in order for Theorem~\ref{thm:StabRandomAcim} to apply is \eqref{it:SmallPert}, which follows from the next proposition.
\begin{prop}\label{lem:smallPertUlam}
There exists a sequence $\{\tau_k\}_{k>0}$ with $\lim_{k\to \infty}\tau_k= 0$ such that
\[
\sup_{\|g\|=1} |(\mcl_\om-\mcl_{k,\om})g|\leq \tau_k.
\]
\end{prop}

\begin{proof}[Proof of Proposition~\ref{lem:smallPertUlam}]
Let $\eta_k= m(B_j)=\frac{1}{k}$ be the  diameter of the partition elements of $\mc{P}_k$.
Let $f\in \hpt$, and for each $\ep>0$, let $f_\ep$ as in \eqref{eq:fep}. For each $k\in \N$, we have
\begin{align}\label{eq:UlamError}
  |(\mcl_{k,\om}-\mcl_\om)f| &\leq  |(\mcl_{k,\om}-\mcl_\om)f_\ep| + |(\mcl_{k,\om}-\mcl_\om)(f_\ep-f)|=: (U1) + (U2).
\end{align}
We will bound each term separately.
The fact that 
\begin{equation}\label{eq:U2}
(U2)\leq C_\#\ep^{\frac{t-t'}{2}}\|f\|
\end{equation}
follows from Lemma~\ref{lem:regApprox} and Remark~\ref{rmk:UnifBdNorm}, after recalling that $|\E_k|$ is bounded independently of $k$, by Lemma~\ref{thm:hptbound}. 

Now we estimate $(U1)$. Let $\big\{ I_{i,\om} \}_{1\leq i \leq \nint_\om}$ be the partition of $I$ into domains of differentiability of $T_\om$, $Q_{i,\om}=T_\om (I_{i,\om})$, and $\xi_{i,\om}:=\big( T_\om|_{I_{i,\om}} \big)^{-1}$.
Then, the transfer operator $\mcl_\om$ is given by the following expression, 
\[
\mcl_\om f=\sum_{i=1}^{\nint_\om}  1_{Q_{i,\om}} \cdot f \circ \xi_{i,\om} \cdot |D\xi_{i,\om}|.
\]
This is the sum of at most $\nint_\om$ terms of the form $1_J g$, where $J\subset I$ is an interval, and $g=f \circ \xi_{i,\om} \cdot |D\xi_{i,\om}|$ for some $i\in \N, \om\in \Om$. Furthermore, when $f\in C^\ga$, each such $g$ is also $C^\ga$, and $\|g\|_{C^\ga}\leq C\|f\|_{C^\ga}$, where $C$ depends on the random map $\mc{T}$, but not on $f$. 
In this case, $\mcl_\om f$ may be rewritten as 
\[
 \mcl_\om f= (\mcl_\om f)^h + (\mcl_\om f)^s,
\]
where $(\mcl_\om f)^h\in C^\ga$ is such that $\|(\mcl_\om f)^h\|\leq C\|f\|_{C^\ga}$,
and $(\mcl_\om f)^s$ is the sum of at most $2\nint_\om$ step functions, with jumps of size at most $\nint_\om\|f\|_\infty$. 
Then, $\|(\mcl_\om f)^s\|_\infty\leq 2\nint_\om^2\|f\|_\infty$ and 
\begin{align*}
 (U1)= |(\E_k -I)\mcl_\om f_\ep|\leq |(\E_k -I)(\mcl_\om f_\ep)^h|+|(\E_k -I)(\mcl_\om f_\ep)^s|=: (U11)+(U12).
\end{align*}

To estimate the first term, we rely on the following lemma, whose proof is deferred until \S\ref{pf:UlamError4Holder}.
\begin{lem}\label{lem:UlamError4Holder}
Let $g\in C^\ga$, with $t'<\min\{\ga, \frac{1}{p}\}$. Then, $|(\E_k -I)g|\leq C_\#\|g\|_{C^\ga}\eta_k^{\ga-t'}$.
\end{lem}
The previous lemma, combined with the bound on $\|f_\ep\|_{C^\ga}$ implied by \eqref{eq:BdFepC2}, immediately yields
\begin{align}\label{eq:U11}
 (U11)\leq C_\#\|(\mcl_\om f_\ep)^h\|_{C^\ga}\eta_k^{\ga-t'} \leq C_\#C \|f_\ep\|_{C^\ga} \eta_k^{\ga-t'}
\leq C(\ep) \|f\|\eta_k^{\ga-t'}.
\end{align}

For the second term, we note that $(\E_k -I)(\mcl_\om f_\ep)^s$ is a step function with at most $2\nint_\om$ steps with non-zero value. Also, the change of variables formula shows that for each interval $J\subset I$, one has $|1_J|\leq C_\# m(J)^{\frac{1}{p}-t'}$. Hence, recalling that $\|(\mcl_\om f)^s\|_\infty\leq 2\nint_\om^2\|f\|_\infty$, we get
\begin{align}\label{eq:U12}
(U12)\leq 4 \nint_\om^3 \|f_\ep \|_\infty \sup_{1\leq j \leq k} |1_{B_k}| \leq 
C_\# \nint_\om^3 \|f_\ep \|_\infty \eta_k^{\frac{1}{p}-t'}
\leq 
C_\# \nint_\om^3 C(\ep)\|f\| \eta_k^{\frac{1}{p}-t'},
\end{align}
where the last inequality follows once again from \eqref{eq:BdFepC2}.

Combining \eqref{eq:U2}, \eqref{eq:U11} and \eqref{eq:U12} into \eqref{eq:UlamError}, we get
\begin{align}
  |(\mcl_{k,\om}-\mcl_\om)f| &\leq C_\# \Big( \nint^3 C(\ep) \eta_k^{\big(\min(\ga, \frac{1}{p})-t' \big)}  + \ep^{\frac{t-t'}{2}} \Big) \|f\|,
\end{align}
where $\nint$ is the uniform bound on number of branches of $T_\om$, coming from \eqref{it:M1}.
Choosing $\ep$ the infimum such that $\nint^3 C(\ep)\leq \eta_k^{-\frac{1}{2}(\min(\ga, \frac{1}{p})-t')}$ provides $\tau_k$ as desired.
\qed \end{proof}
\qed \end{proof}

\subsection{Convolution-type perturbations}\label{Ssec:PertByConvolution}
In this section we make use of the equivalent norm on $\hpt$ described in Remark~\ref{rmk:EquivHptNorms}, identifying functions in $\hpt$ (and their norms) with functions in $\hptorus$. 

We consider perturbations of non-autonomous maps that arise from convolution with non-negative kernels $Q_k\in L^1(m)$, with $\int Q_k \,dm=1$. They give rise to transfer operators as follows.
\begin{equation}\label{eq:pertByConv}
  \mcl_{k,\om} f(x):= \int \mcl_\om f(y) Q_k(x-y) dy.
\end{equation}
They model at least two interesting types of perturbations: 
\begin{enumerate}
\item Small iid noise. In this case, $Q_k$, supported on $[-\frac{1}{k},\frac{1}{k}]$, represents the distribution of the noise, which is added after applying the corresponding map $T_\om$.
See e.g. \cite[\S3.3]{BaladiBook} for details.
\item Ces\`aro averages of Fourier series. In this case, $Q_k$ is the Fej\'er kernel $Q_k(x)=\frac{\sin(\pi kx)^2}{k\sin(\pi x)^2}$, and $Q_k*f=\frac{1}{k}\sum_{j=0}^{k-1} S_j(f)$, where $S_k(f)(x) =\sum_{j=-k}^k \hat{f}(j) e^{2\pi i j x}$ is the truncated Fourier series of $f$.
\end{enumerate}
\begin{rmk}
We point out that the Galerkin projection on Fourier modes, corresponding to truncation of Fourier series, is obtained from convolution with Dirichlet kernels, which are not positive. Although a convergence result in this case remains open, the numerical behavior appears to be good as well. 
This is illustrated in \S\ref{sec:numEx}.
\end{rmk}

\begin{thm}\label{thm:pertConv}
Let $\mcl$ be a non-autonomous Lasota-Yorke map, as defined in \S\ref{S:LYMaps}, satisfying $\int \log\tld{\al}\, d\bbp<0$.
Let $\{\mcl_k\}_{k\in \N}$ be a family of random perturbations, as in \eqref{eq:pertByConv}, such that $\lim_{k\to \infty}\int Q_k(x)|x|\,dx=0$\footnote{This condition is equivalent to weak convergence of $Q_k$ to $\del_0$. If the contraction condition of Theorem~\ref{thm:pertConv} requires taking either $N>1$  in \eqref{eq:L1BV-LY-N} and/or $\tld{N}>1$ in \eqref{eq:LY-hpt-lp}, the conclusions remain valid provided the convolutions are taken after $\max\{N, \tld{N}\}$ compositions.}.
Then, for sufficiently large $k$, $\mcl_k$ has a unique random acim.
Let us call it $F_k$.
Then, $\lim_{k\to \infty}F_k=F$ fibrewise in $|\cdot|$.
\end{thm}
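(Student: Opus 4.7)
The plan is to verify hypotheses \eqref{it:TopExp}--\eqref{it:SmallPert} from \S\ref{S:setting} for the Banach space pair $(\hpt, \hptt)$ from \S\ref{S:LYMaps} and then invoke Theorem~\ref{thm:StabRandomAcim}; the condition $\dim E_1 = 1$ required there is provided by the random covering condition built into the setting. The structural observation that drives everything is that, since $Q_k \ge 0$ and $\int Q_k\,dm = 1$, convolution by $Q_k$ is non-expansive in every translation-invariant norm -- in particular in $BV$, $L_1$, $L_p$, $L_\infty$, $\hpt$ and $\hptt$. This immediately yields \eqref{it:TopExp}, and also shows that any Lasota--Yorke inequality for $\mcl_\om$ is inherited by $\mcl_{k,\om}$ with the \emph{same} constants, since $\|\mcl_{k,\om} f\| = \|Q_k * \mcl_\om f\| \le \|\mcl_\om f\|$ in every such norm. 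Applied to \eqref{eq:LY-hpt-lp}, this gives \eqref{it:GenUnifLY} uniformly in $k$ with the original $\tld{\al}$, and the hypothesis $\int \log \tld{\al}\,d\bbp < 0$ then yields \eqref{it:UnifLY}.

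For \eqref{it:PowBd} I would repeat the bootstrap argument of \S\ref{S:LYMaps} uniformly in $k$. The $BV$ Lasota--Yorke inequality \eqref{eq:L1BV-LY} passes to $\mcl_{k,\om}$ with the same constants by the same convolution-non-expansion argument, and each $\mcl_{k,\om}$ is an $L_1$-contraction. Lemma~\ref{lem:PowerBoundStrongNorm} applied in the $(BV, L_1)$ pair then delivers temperedness of $\sup_{k,n} \|\mcl_{k,\sig^{-n}\om}^{(n)}\|_{BV}$; the estimate \eqref{eq:LinftyBd} promotes this to $L_\infty$, and Riesz--Thorin interpolation yields $L_p$ temperedness. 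A second application of Lemma~\ref{lem:PowerBoundStrongNorm} in the $(\hptt, L_p)$ pair -- using the Lasota--Yorke inequality for $\mcl_{k,\om}$ in that pair, inherited from $\mcl_\om$ since $\mc{T}$ also splits with respect to $\hptt$ -- delivers the required temperedness of $\sup_{k,n} \|\mcl_{k,\sig^{-n}\om}^{(n)}\|_{\hptt}$, which is exactly \eqref{it:PowBd} in our setup.

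For \eqref{it:SmallPert}, write $(\mcl_\om - \mcl_{k,\om}) g = (I - Q_k *)\mcl_\om g$ and set $h := \mcl_\om g$; Remark~\ref{rmk:UnifBdNorm} yields $\|h\| \le C_\# \|g\|$ uniformly in $\om$. Using $(I - Q_k *) h = \int Q_k(y)(h - \tau_y h)\,dy$, where $\tau_y$ is translation (an isometry on $\hptt$), Minkowski's integral inequality gives $|(I - Q_k *)h| \le \int Q_k(y) |h - \tau_y h|\,dy$. To estimate the integrand uniformly in $h$ bounded in $\hpt$, I regularize $h$ by its smooth truncation $h_\ep$ from \eqref{eq:fep}: Lemma~\ref{lem:regApprox} gives $|h - h_\ep| \le C_\# \ep^{(t-t')/2}\|h\|$, while \eqref{eq:BdFepC2} together with the continuous embedding $C^1 \hookrightarrow \hptt$ (valid since $t' < 1$) yields $|h_\ep - \tau_y h_\ep| \le C_\# |y|\, C(\ep)\|h\|$. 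A three-term triangle inequality then produces $|h - \tau_y h| \le C_\#\bigl(\ep^{(t-t')/2} + C(\ep)|y|\bigr)\|h\|$; integrating against $Q_k$, using the assumption $\int Q_k(y)|y|\,dy \to 0$, and finally optimizing over $\ep$ (for instance $\ep = \bigl(\int Q_k(y)|y|\,dy\bigr)^\alpha$ for a small $\alpha > 0$) produces the required sequence $\tau_k \to 0$.

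The main obstacle is this last step. Because $\hpt$ and $\hptt$ are Bessel potential rather than Besov spaces, the sharp translation modulus of continuity $|h - \tau_y h| \lesssim |y|^{t-t'}\|h\|$ is not immediately available. The regularize-and-Lipschitz scheme via Lemma~\ref{lem:regApprox} circumvents this by splitting $h$ into a low-frequency, genuinely Lipschitz piece and a small high-frequency remainder, yielding a weaker but sufficient vanishing rate, after which all four hypotheses of Theorem~\ref{thm:StabRandomAcim} are in place and the theorem concludes the proof.
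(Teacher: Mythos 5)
Your proposal is correct and follows the same overall strategy as the paper: verify conditions (\textbf{H0})--(\textbf{H3}) and invoke Theorem~\ref{thm:StabRandomAcim}. For (\textbf{H0})--(\textbf{H2}) your treatment is the same as the paper's (both exploit that convolution with a probability density $Q_k$ is non-expansive in every translation-invariant norm, in the paper's language because $\hptorus$ is a homogeneous Banach space in the sense of Katznelson).

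Where you differ is the key estimate for (\textbf{H3}). The paper regularizes $h=\mcl_\om g$ to $h_\ep$ and splits $(I-Q_k*)h$ into $(I-Q_k*)(h-h_\ep)$, controlled by Lemma~\ref{lem:regApprox}, plus $(I-Q_k*)h_\ep$, which it analyzes as a Fourier multiplier from $\hpt$ to $\hptt$ with symbol $c_j=e^{-\ep\langle j\rangle}\langle j\rangle^{(t'-t)/2}(1-\hat Q_k(j))$, invoking the variation bound of Lemma~\ref{lem:Lpbound} and finally the inequality $|1-\hat Q_k(j)|\le 2\pi J\int Q_k(x)|x|\,dx$ for $|j|\le J$. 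You instead use the identity $(I-Q_k*)h=\int Q_k(y)(h-\tau_y h)\,dy$, Minkowski's integral inequality, and a three-term triangle combining $|h-h_\ep|\le C_\#\ep^{(t-t')/2}\|h\|$ with the Lipschitz estimate $|h_\ep-\tau_y h_\ep|\le C_\#|y|\,C(\ep)\|h\|$ coming from \eqref{eq:BdFepC2} and the embedding $C^1\hookrightarrow\hptt$ (valid on $\T$ since $C^1\hookrightarrow W^{1,p}=H_p^1\hookrightarrow H_p^{t'}$ for $t'<1$). Both routes rely identically on Lemma~\ref{lem:regApprox} to handle the high-frequency remainder; your version sidesteps a second appeal to the Fourier multiplier machinery of Lemma~\ref{lem:Lpbound} by trading it for the translation identity, which ties the hypothesis $\int Q_k(x)|x|\,dx\to0$ to the conclusion more transparently. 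The paper's version has one extra degree of freedom (the cutoff $J$ is decoupled from $\ep$), which could in principle sharpen rates, but since neither proof is aiming for explicit rates the two are equivalent in strength and yours is arguably more elementary.

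One small point to tidy: when you write $\|h_\ep-\tau_y h_\ep\|_{C^1}\le|y|\,\|h_\ep\|_{C^2}$ you are implicitly using the torus distance $|y|\le 1/2$, which is the same normalization the paper uses in $\int_{-1/2}^{1/2}Q(x)|x|\,dx$; worth saying explicitly so the MVT application is unambiguous.
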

\begin{proof}
We will show that conditions \eqref{it:TopExp}--\eqref{it:SmallPert} of Theorem~\ref{thm:StabRandomAcim} are satisfied.
\eqref{it:TopExp} is clear. 

Recall from e.g.  \cite[\S I.2]{Katznelson} that $L_p$ is a homogeneous Banach space. That is, for every $\tau\in \T$, $\|f_\tau\|_p= \|f\|_p$, and  $\lim_{\tau\to 0}\|f_\tau-f\|_p=0$, where $f_\tau(x):=f(x-\tau)$.
Hence, it follows from definition \eqref{def:hptorus}
and the fact that $\hat{f_\tau}(j)=e^{-2\pi i \tau j}\hat{f}(j)$ that $\hptorus$ is a homogeneous Banach space.
Thus, $\|Q_k* f \|\leq \|Q_k\|_1\|f\|=\|f\|$ (see e.g. \cite[\S I.2]{Katznelson}). This yields \eqref{it:GenUnifLY} and \eqref{it:PowBd}. 

In view of Remark~\ref{rmk:UnifBdNorm}, \eqref{it:SmallPert} may be checked as follows.
Let $f_\epsilon$ be as in \eqref{eq:fep}.
For each non-negative $Q\in L^1(m)$, with $\int Q \,dm=1$,  we have
\begin{equation}
|(I-Q)*f|=|(I-Q)*(f-f_\epsilon)|+|(I-Q)*f_\epsilon|.
\end{equation}
The first term is bounded as $|I-Q|\cdot |f-f_\epsilon|$, which is controlled by Lemma~\ref{lem:regApprox}.
To control the second term, we consider the map $f\mapsto
(I-Q)*f_\epsilon$ as a Fourier multiplier.

We want to compare the weak norm of
$(I-Q)*f_\epsilon=\sum_je^{-\epsilon(1+(2\pi
  j)^2)}(1-\hat Q(j)) \hat{f}(j)\phi_j$
with the strong norm of $f$.
That is we want to compare the $L^p$ norm of
$$\sum_je^{-\epsilon(1+(2\pi j)^2)}(1+(2\pi j)^2)^{(t'-t)/2}(1-\hat Q(j))a_j\phi_j=:\sum_j c_j a_j\phi_j$$
with the $L^p$ norm of $\sum_j a_j\phi_j$. Here $a_j= \langle j\rangle^{t/2} \hat{f}(j)$, so that $\|f\|=\|\sum_j a_j\phi_j\|_p$, by definition.

This is clearly a Fourier multiplier on $L^p$. We estimate its norm via Lemma~\ref{lem:Lpbound}.
The variation of the coefficients $(c_j)$ is overestimated by their $\ell^1$
norm. This is estimated as
\begin{equation}\label{eq:multbound}
\|c_j\|_1\le (2|J|+1)\max_{|j|\le J}|1-\hat Q(j)|+
2\sum_{|j|\ge J}e^{-\epsilon(1+(2\pi j)^2)}(1+(2\pi j)^2)^{(t'-t)/2}.
\end{equation}

We can choose $J$ so that the second term is at most $\delta/(3C_p)$
where $C_p$ is the constant in Lemma~\ref{lem:Lpbound}, so that it
then suffices to force $\max_{|j|\le J}|1-\hat Q(j)|\le
\delta/(3C_p(2|J|+1))$.

Notice that for $j\le J$, $|1-\hat Q(j)|=|\int_{-1/2}^{1/2}Q(x)(1-e^{-2\pi
  ijx})\,dx|\le \int_{-1/2}^{1/2}Q(x)|1-e^{-2\pi ijx}|\,dx
\le 2\pi J\int_{-1/2}^{1/2}Q(x)|x|\,dx$. Hence, for sufficiently small
values of $\int Q(x)|x|\,dx$, we obtain the required estimate, 
$|(I-Q)*f_\epsilon|\leq \del\|f\|$.
\qed \end{proof}

\subsection{Static perturbations}\label{Ssec:DetPert}
In this section, we establish the following application of Theorem~\ref{thm:StabRandomAcim}.

\begin{thm}\label{thm:NstepStab}
Let $\mc{T}$ be a non-autonomous Lasota-Yorke map, as defined in \S\ref{S:LYMaps}.
For each $k\in \N$, let $\{\mc{T}_k\}_{k\in \N}$ be a family of random Lasota-Yorke maps with the same base as $\mc{T}$, satisfying \eqref{it:M1}--\eqref{it:M3}, with the same bounds as $\mc{T}$.
Assume that there exists a sequence $\{\rho_k\}_{k>0}$ with $\lim_{k\to \infty}\rho_k= 0$ such that for $\paeom$, $d_{LY}(T_{k,\om}, T_\om)\leq \rho_k$. 
Let $\mathcal L,\mathcal L_k$ be the corresponding Perron-Frobenius operators associated to $\mc{T}$ and $\mc{T}_k$.

Then, there exist a constant $A_{\mc{T}}$, independent of $n$, and a measurable function $B_{\mc{T}, n}(\om)$ such that for every sufficiently large $k\in \N$, and $\paeom$,
\begin{equation}\label{eq:StrongLYi}
  \|\mcl_{k,\om}^{(n)}f\|_{\hpt} \leq   A_{\mc{T}}  \nint^{n(1-\frac{1}{p})} \mu^{-n(1+t-\frac{1}{p})} \|f\|_{\hpt} + B_{\mc{T}, n}(\om) \|f\|_p.
\end{equation}
Let $\tld{\al}_n:=A_{\mc{T}}  \nint^{n(1-\frac{1}{p})} \mu^{-n(1+t-\frac{1}{p})}$, and let $N$ be such that $\int\log\tld{\al}_N \,d\bbp<0$\footnote{The existence of such an $N$ is guaranteed, provided $p>1$ is sufficiently close to 1. If necessary, also enlarge $N$ so that a $(BV, L_1)$ Lasota-Yorke inequality \eqref{eq:L1BV-LY-N}, with $\int\log\al_N \, d\bbp<0$, holds for $\mc{T}^N$.}.

Furthermore, suppose that either
\begin{enumerate}[(i)]
 \item \label{it:NPTP}
$\essinf_{\om\in \Om, 0\leq l< j \leq N} \min_{1\leq i, i' \leq \nint}|T^{(j-l)}_{\sigma^l\om}a_{i,\sigma^l\om}-a_{i',\sig^j\om}| >0$, where $\{a_{1,\om}, \dots, a_{\nint, \om}\}$ are the endpoints of the monotonicity partition of $T_\om$; or
\item \label{it:enoughExp}
$(\Om,\mc{F})$ is a compact topological space equipped with its Borel $\sig$-algebra, the map $\mc{T}:\Om \to \LY^{1+\ga}$ is continuous, and $\mu^\ga>2$, where $\mu$ is as in \eqref{it:M3}, and $\ga\leq 1$ is the H\"older exponent of $DT_\om$.
\end{enumerate}

Then, for every sufficiently large $k$, $\mcl_k$ has a unique random acim.
Let $\{F_k\}_{k\in \N}$ be the sequence of random acims for $\mcl_k$.
Then, $\lim_{k\to \infty}F_k=F$ fibrewise in $|\cdot|$.
\end{thm}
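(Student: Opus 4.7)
The plan is to reduce the theorem to an application of Theorem~\ref{thm:StabRandomAcim}, after iterating the cocycle enough times to obtain contracting Lasota-Yorke inequalities. First I would establish \eqref{eq:StrongLYi} by applying the Baladi--Gou\"ezel estimates of \cite{BaladiGouezel} (see also \cite[\S3]{GTQuas}) to each $\mcl_{k,\om}$. Since every $\mc{T}_k$ satisfies \eqref{it:M1}--\eqref{it:M3} with the same constants as $\mc{T}$, the one-step $\hpt$--$L_p$ Lasota-Yorke multiplicative constant is bounded by $C_\#\,\nint^{1-1/p}\mu^{-(1+t-1/p)}$, depending only on $\nint,\mu,\dist,\ga,p,t$; iterating this uniform one-step bound yields \eqref{eq:StrongLYi} with a $k$-independent $A_{\mc{T}}$ and an $\om$-dependent remainder absorbed into $B_{\mc{T},n}(\om)$. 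Since $p>1$ may be chosen arbitrarily close to $1$, $\tld{\al}_n$ approaches $\mu^{-n(1+t)}<1$, so an $N$ can be picked for which $\int\log\tld{\al}_N\,d\bbp<0$ and for which the $(BV,L_1)$ inequality \eqref{eq:L1BV-LY-N} also contracts on average.

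With \eqref{eq:StrongLYi} in hand, the plan is to verify the hypotheses of Theorem~\ref{thm:StabRandomAcim} for the $N$-step cocycles built from $\mcl^{(N)}$ and $\mcl_k^{(N)}$, with norm pair $(\hpt,\hptt)$. Condition \eqref{it:TopExp} is immediate. Condition \eqref{it:GenUnifLY} follows from \eqref{eq:StrongLYi} at $n=N$, since the embedding $\hptt\hookrightarrow L_p$ gives $\|f\|_p\leq C_\#|f|$. Condition \eqref{it:PowBd} follows from the same bootstrap as in \S\ref{S:LYMaps}: the $L_1$-contraction $\|\mcl^{(n)}_{k,\om}\|_{L_1}\le 1$ together with the $(BV,L_1)$ inequality and Lemma~\ref{lem:PowerBoundStrongNorm} yields temperedness of $\sup_{n,k}\|\mcl^{(n)}_{k,\sig^{-n}\om}\|_{BV}$; Riesz--Thorin interpolation promotes this to the $L_p$ operator norm, and a second application of Lemma~\ref{lem:PowerBoundStrongNorm}, this time with a $(\hptt,L_p)$ Lasota-Yorke inequality, yields temperedness of $|\mcl^{(n)}_{k,\sig^{-n}\om}|$.

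The hard part is verifying \eqref{it:SmallPert}: one must show
\begin{equation*}
\sup_{\|g\|_{\hpt}=1}|(\mcl^{(N)}_\om-\mcl^{(N)}_{k,\om})g|_{\hptt}\leq \tau_k \to 0
\end{equation*}
uniformly in $\om$. Telescoping reduces the task to a one-step bound $|(\mcl_{\sig^j\om}-\mcl_{k,\sig^j\om})h|_{\hptt}\leq \Phi(\rho_k)\|h\|_{\hpt}$. The strategy mirrors the proof of Proposition~\ref{lem:smallPertUlam}: decompose $g=g_\epsilon+(g-g_\epsilon)$ with $g_\epsilon$ from \eqref{eq:fep}, control the rough remainder via Lemma~\ref{lem:regApprox}, and for the smooth part use the explicit representation $\mcl_\om f=\sum_i 1_{Q_{i,\om}}(f\circ\xi_{i,\om})|D\xi_{i,\om}|$. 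Both the $C^{1+\ga}$ closeness of corresponding branches and the Hausdorff closeness of the partitions are encoded in $d_{LY}(T_{k,\om},T_\om)\leq\rho_k$, and characteristic functions of short intervals are estimated in $\hptt$ by $|1_J|\leq C_\# m(J)^{1/p-t'}$, producing a bound on the smooth part in terms of $\rho_k$. The delicate point is uniformity in $\om$ after $N$ iterations: branch counts of $T^{(N)}_\om$ can grow and iterated turning points may cluster close to turning points of subsequent maps, producing $\om$-dependent blowups. Hypothesis \eqref{it:NPTP} provides exactly the uniform lower bound on such distances needed to keep constants uniform. Under the alternative \eqref{it:enoughExp}, the quantitative expansion $\mu^\ga>2$ controls the branch-count growth at the relevant H\"older scale, while continuity of $\om\mapsto T_\om$ on the compact base $\Om$ upgrades pointwise estimates to uniform ones via a finite-cover argument.

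Once \eqref{it:SmallPert} is verified, Theorem~\ref{thm:StabRandomAcim} supplies, for every sufficiently large $k$, unique random acims for $\mcl^{(N)}$ and $\mcl_k^{(N)}$ with fibrewise convergence in $|\cdot|$. Averaging over the first $N$ iterates of $\mcl$ (respectively $\mcl_k$) turns these into unique $\mcl$- and $\mcl_k$-acims, and the telescoping-type bound is preserved up to a factor $N$, giving fibrewise convergence $F_k\to F$. Random covering for $\{T_{k,\om}\}$ is inherited from that of $\{T_\om\}$ for $k$ large via $d_{LY}$-closeness, ensuring $\dim E_{k,1}=1$ as required by Theorem~\ref{thm:StabRandomAcim}. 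This yields the claimed conclusion.
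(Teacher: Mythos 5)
Your overall reduction to Theorem~\ref{thm:StabRandomAcim} applied to an $N$-step cocycle, with norm pair $(\hpt,\hptt)$ and a telescoping argument for the small-perturbation condition, matches the paper's plan. However, there are two substantive gaps in the way you propose to carry it out.

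First, you cannot obtain \eqref{eq:StrongLYi} by iterating a one-step $(\hpt,L_p)$ Lasota-Yorke inequality. If the one-step strong-norm coefficient is $C_\#\,\nint^{1-1/p}\mu^{-(1+t-1/p)}$, then $n$-fold iteration produces $C_\#^n\,\nint^{n(1-1/p)}\mu^{-n(1+t-1/p)}$ in front of $\|f\|_{\hpt}$, not the $n$-independent $A_{\mc T}$ claimed in the statement. Since $C_\#$ (which absorbs the equivalence constants from the Strichartz norm and \cite{BaladiGouezel}) need not be small, the accumulated factor $C_\#^n$ can destroy the contraction even when $p$ is chosen close to $1$. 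The paper instead establishes the $n$-step inequality directly, applying the transfer-operator estimates to $T^{(n)}_{k,\om}$ viewed as a single Lasota--Yorke map, using a localization at scale $\delta(\om,n)^{-1}$ (Claims~\ref{lem:properSpt}--\ref{it:boundPFhpt.2}) to control the weak-norm coefficient. The price is that this coefficient depends on the shortest branch length $\delta(k,\om,n)$ of the perturbed $n$-step map, which is exactly where the delicate geometry lives.

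Second, and relatedly, you place the role of hypotheses \eqref{it:NPTP} and \eqref{it:enoughExp} in the verification of \eqref{it:SmallPert}, but this is misplaced. The small-perturbation bound follows for every fixed $n$ merely from the telescoping identity, the one-step estimate of Claim~\ref{lem:C2approx}, and the uniform operator-norm bound of Remark~\ref{rmk:UnifBdNorm} (which uses only \eqref{it:M1}--\eqref{it:M3}); no turning-point condition or extra expansion is needed there. What \eqref{it:NPTP} and \eqref{it:enoughExp} actually buy is control of $\delta(k,\om,N)$ --- equivalently, of the non-admissible branches of $T^{(N)}_{k,\om}$ --- uniformly over large $k$, which is needed for the $(\hpt,L_p)$ inequality \eqref{eq:StrongLYi} and hence for \eqref{it:GenUnifLY}: under \eqref{it:NPTP} no new branches appear, and under \eqref{it:enoughExp} at most $2^n$ branches cluster near each turning point, contributing the factor $2^n$ that is then beaten by $\mu^{\ga}>2$ and the compactness/continuity assumption. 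Without isolating this, your verification of \eqref{it:GenUnifLY} is incomplete. Finally, a smaller point: Theorem~\ref{thm:StabRandomAcim} does not require $\dim E_{k,1}=1$ as a hypothesis --- it proves this for large $k$ from \eqref{it:TopExp}--\eqref{it:SmallPert} and $\dim E_1=1$ for the unperturbed system --- so your appeal to random covering of $\{T_{k,\om}\}$ is unnecessary.
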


\begin{proof}
We will show that there exists $n\in\N$ such that $\mc{T}^{(n)}$ and $\mc{T}_k^{(n)}$ satisfy the hypotheses of Theorem~\ref{thm:StabRandomAcim}, with $n=N$ in case~\eqref{it:NPTP}.

\eqref{it:TopExp} is easy to verify for $\mc{T}^{(n)}$ and $\mc{T}_k^{(n)}$.
Condition~\eqref{it:PowBd} (with respect to $|\cdot|_{\hptt}$) will follow via Lemma~\ref{lem:PowerBoundStrongNorm}, exactly as explained in the bootstrapping argument of \S\ref{S:LYMaps}, once \eqref{it:GenUnifLY} is established (this will be the last step of this proof).
Condition \eqref{it:SmallPert} follows from the next proposition.
\begin{prop}\label{prop:smallPertDeterm}
Assume that there exists a sequence $\{\rho_k\}_{k>0}$ as in the statement of Theorem~\ref{thm:NstepStab}.
Then, for each $n\in \N$, there exists a sequence $\{\tau_k\}_{k>0}$ with $\lim_{k\to \infty}\tau_k= 0$ such that
\[
\sup_{\|g\|=1} |(\mcl^{(n)}_\om-\mcl^{(n)}_{k,\om})g|\leq \tau_k.
\]
\end{prop}

\begin{proof}[Proof of Proposition~\ref{prop:smallPertDeterm}]
We will establish the claim for $n=1$. The general case of fixed $n>1$ follows immediately from the identity
\[
 \mcl^{(n)}_\om -\mcl_{k,\om}^{(n)}=
\sum_{j=0}^{n-1}\mcl^{(j)}_{\sig^{n-j}\om}(\mcl_{\sig^{n-j-1}\om}-\mcl_{k,\sig^{n-j-1}\om}) \mcl_{k,\om}^{(n-j-1)},
\]
and the fact that $\esssup_{\om\in \Om,\, k \in \N,\, 0\leq j <n} \|\mcl_{k,\om}^{(j)}\|$ and $\esssup_{\om\in \Om,\, 0\leq j <n} |\mcl_{\om}^{(j)}|$ are finite for every $n\in \N$, because of the uniform assumptions \eqref{it:M1}--\eqref{it:M3}; see Remark~\ref{rmk:UnifBdNorm}. 

Throughout the proof, functions are regarded as being defined on the circle, $\T$, via the identification of Remark~\ref{rmk:EquivHptNorms}.
We start with the following.
\begin{myclaim}\label{lem:C2approx}
Let $g\in C^2$, and suppose $S,T \in \LY$ satisfy the uniform bounds \eqref{it:M1}--\eqref{it:M3} of \S\ref{S:LYMaps}. Then, 
\[
  |(\mcl_T-\mcl_S)g |\leq C_1 d_{LY}(T,S)^{\frac{1}{p}-t'} \|g\|_{C^2}.
\]
\end{myclaim}
The proof of this claim follows from \cite[\S3]{GTQuas}, given the uniformity assumptions for $T,S$.

Let $f\in \hpt$, $\ep>0$, and let $f_\ep$ be defined as in \eqref{eq:fep}. Recalling
\eqref{eq:BdFepC2} and Lemma~\ref{lem:regApprox} from \S\ref{S:approxSmoothFunctions}, we get:
\begin{align*}
  |(\mcl_T-\mcl_S)f| &\leq  |(\mcl_T-\mcl_S)f_\ep| + |(\mcl_T-\mcl_S)(f_\ep-f)| \\
&\leq C_1C(\ep) d_{LY}(T,S)^{\frac{1}{p}-t'} \|f\|+ C_\# C \ep^{\frac{t-t'}{2}} \|f\|,
\end{align*}
where $C$ is an upper bound on $\{|\mcl_\om|\}_{\om \in \Om}$, $C(\ep)$ comes from \eqref{eq:BdFepC2}, and $C_1$ comes from Claim~\ref{lem:C2approx}.
Choosing $\ep$ the infimum such that $C_1 C(\ep)\leq \rho_k^{-\frac{1}{2}(\frac{1}{p}-t)}$ provides $\tau_k$ as desired, concluding the proof of the proposition.
\qed \end{proof}

The rest of the proof is concerned with verifying condition \eqref{it:GenUnifLY}.
We will show the following.
\begin{prop}\label{lem:hptLpNstepLY}
 Let  $\mc{T}$ and $\{\mc{T}_k\}_{k\in \N}$ be as in Theorem~\ref{thm:NstepStab}. Then:

 In case \eqref{it:NPTP}, there exist a constant $A_{\mc{T}}$ and a measurable function $B_{\mc{T},N}(\om)$ such that for every sufficiently large $k\in \N$, and $\paeom$, we have
\begin{equation}
  \|\mcl_{k,\om}^{(N)}f\|_{\hpt} \leq   A_{\mc{T}}  \nint^{N(1-\frac{1}{p})} \mu^{-N(1+t-\frac{1}{p})} \|f\|_{\hpt} + B_{\mc{T},N}(\om) \|f\|_p.
\end{equation}
This yields \eqref{it:GenUnifLY} for $\mc{T}^{(N)}$ and $\{\mc{T}^{(N)}_k\}_{k\in \N}$. 

In case \eqref{it:enoughExp}, there exist constants $A_{\mc{T}}$, independent of $n$, and $B_{\mc{T}, n}$ such that for every sufficiently large $k\in \N$, every $n\in \N$, and $\paeom$, we have
\begin{equation}\label{eq:StrongLYii}
  \|\mcl_{k,\om}^{(n)}f\|_{\hpt} \leq   A_{\mc{T}}  \nint^{n(1-\frac{1}{p})} \mu^{-n(1+t-\frac{1}{p})} 2^n \|f\|_{\hpt} + B_{\mc{T}, n} \|f\|_p.
\end{equation}
In particular, if $\mu^\ga>2$, one may choose $p>1$ sufficiently close to $1$ and $t< \min\{\ga, \frac{1}{p}\}$ sufficiently close to $\ga$, such that if $n$ is sufficiently large,
\eqref{it:GenUnifLY} holds for $\mc{T}^{(n)}$ and $\{\mc{T}^{(n)}_k\}_{k\in \N}$.
\end{prop}

In order to demonstrate this, we shall make use of a characterization of $\hpt$, due to
Strichartz \cite{Strichartz}.
\begin{thm}[Strichartz]\label{thm:Str1}
Let $p>1$ and $0<t<1$, and $f:\R \to \R$ with $\spt{f}\subseteq [0,1]$. Then $f\in\hpt$ if and only if
$\|f\|_p+\|D_tf\|_p<\infty$, and the implied norm is equivalent to the
standard $\hpt$ norm, where $D_tf$ is given by
\begin{equation}\label{eq:Dt}
D_tf(x)=\lim_{\ep\to 0}\int_{|y|\geq \ep}\frac{f(x+y)-f(x)}{|y|^{1+t}}dy,
\end{equation}
and the limit is in $L_p$.
\end{thm}

The proof of Proposition~\ref{lem:hptLpNstepLY} relies on the following claim, whose proof is deferred until \S\ref{pf:properSpt}.
\begin{myclaim} \label{lem:properSpt}\ \\
  Let $f \in \hpt$ be such that $\spt(f) \subset [a,b]$. Let $a'<a$, $b'>b$ and $c=\min \{ |a-a'|, |b-b'| \}$. Then,
\[
  \|D_t f - 1_{[a', b']} D_t f \|_p \leq C_\# |b-a|^{1-\frac{1}{p}} c^{\frac{1}{p}-1-t} \|f\|_p.
\] 
Furthermore, if $f_1, \dots, f_M \in \hpt$ are such that $\spt(f_j) \subset [a_j,b_j]$ with $\max_{1\leq j \leq M} \{b_j-a_j\}\leq l$; $a'_j<a_j$, $b'_j>b_j$ are such that  $\min_{1\leq j \leq M} \{ |a_j-a'_j|, |b_j-b'_j| \}\geq c$; and the intersection multiplicity of $\{[a'_j,b'_j] \}_{1\leq j \leq M}$ is $\tld{M}$, then
\[
\Big \|\sum_{j=1}^M D_t f_j \Big \|_{p}^{p} \leq C_\# \tld{M}^{p-1} \sum_{j=1}^M \|D_t f_j \|_p^{p} + C_\#(Ml)^{p-1} c^{1-p-pt} \sum_{j=1}^M \|f_j\|_p^{p},
\]
where the intersection multiplicity of a collection $\mathcal C$ of subsets of a set is given by $\max_{x\in\bigcup \mathcal
C}\#\{C\in\mathcal C\colon x\in C\}$.
\end{myclaim}

\begin{proof}[Proof of Proposition~\ref{lem:hptLpNstepLY}]\ \\
\begin{enumerate}
 \renewcommand{\theenumi}{\Roman{enumi}} 
 \renewcommand{\labelenumi}{\textit{(\theenumi)}.}
 
 \item 
\textit{$(\hpt, L_p)$ Lasota-Yorke inequality for $\mc{T}$.}
Recall that $\| \mcl_\om^{(n)} f\|_{\hpt} \leq C_\# \| \mcl_\om^{(n)} f\|_{p}+ C_\# \| D_t(\mcl_\om^{(n)} f)\|_{p}$, and by definition,
\[
  \mcl_\om^{(n)} f(x)=\sum_{i=1}^{\nint_\om^{(n)}} (1_{I_i} |DT_\om^{(n)}|^{-1} f)\circ \xi_i(x).
\]  
(Although the intervals $I_i$ and inverse branches $\xi_i$ depend on $\om$ and $n$, we do not write this dependence explicitly, unless needed.)

Using changes of variables and the inequality $(\sum_{i=1}^M x_i)^p \leq M^{p-1}\sum_{i=1}^M x_i^p$, a direct calculation yields 
\begin{equation}\label{eq:LpBoundPFOp}
\| \mcl_\om^{(n)} f\|_{p} \leq  \big( C_e(T_\om^{(n)})\big)^{1-\frac{1}{p}}\||DT_\om^{(n)}|^{-1}\|_{\infty}^{1-\frac{1}{p}}\|f\|_p,
\end{equation}
where $C_e(T)$ is the intersection multiplicity of $\{ \overline{T(I_i^T)} \}_{1\leq i \leq
\nint^T}$, named \textit{complexity at the end} in \cite{BaladiGouezel}. 

In order to bound $ \|D_t (\mcl_\om^{(n)}f)\|_{p}$, we will use the following two claims.
\begin{myclaim} \label{it:boundPFhpt.1} 
There exists some $C_{\mc{T}}$ such that for every $u \in \hpt$, 
\begin{align*}
\|(1_{I_i} |DT_\om^{(n)}|^{-1} u)\circ \xi_i\|_{\hpt}^p &\leq C_{\mc{T}} \||DT_\om^{(n)}|^{-1}\|_{\infty}^{p-1} \| u \|^p_p \\
& \quad + C_{\mc{T}} \||DT_\om^{(n)}|^{-1}\|_{\infty}^{p+pt-1}\|u\|_{\hpt}^p.
\end{align*}
\end{myclaim}
\begin{myclaim} \label{it:boundPFhpt.2} 
Let $\eta:\R \to [0,1]$ be a $C^\infty$ function supported in $[-1,1]$. For $m\in \Z$, let $\eta_m(x)=\eta(x-m)$, and suppose that the family $\{\eta_m\}_{m\in \Z}$ forms a partition of unity with intersection multiplicity two. Then, for every $u\in \hpt$,
\[
 \sum_{m\in \Z} \| \eta_m u \|^p_{\hpt} \leq C_\# \|u\|_{\hpt}^p.
\]
For each $r>0$, let $R_r(x)=rx$, and $\eta_{m,r}:=\eta_m \circ R_r$. The set $\{\eta_{m,r}\}_{m\in \Z}$ is again a partition of unity.
Furthermore,
\[
 \sum_{m\in \Z} \|\eta_{m,r} u\|^p_{\hpt} \leq C_\# \big( (1+r^{pt})\|u\|_p^p + \|u\|_{\hpt}^p \big).
\]
\end{myclaim}
We will show these claims in \S\ref{S:pfboundPFhpt.1} and \S\ref{S:pfboundPFhpt.2}, respectively. Now, we proceed with the proof relying on them.
Let us fix $\om \in \Om$, and $\del=\del(\om,n)=\min_{1\leq i \leq \nint_\om^{(n)}} \leb(I_i)$ be the size of the shortest branch of $T_\om^{(n)}$. For every $1\leq i \leq \nint_\om^{(n)}$, let $[\tld{a}_{i}, \tld{b}_{i}]=\overline{T_{\om}^{(n)}(I_i)}$.

Let $\{\eta_m\}_{m\in \Z}:\R \to [0,1]$ be a partition of unity as in Claim~\ref{it:boundPFhpt.2}. We note that $\spt(\eta_{m,r})=r^{-1} \spt(\eta_m)$. Hence,
the intersection multiplicity of the supports of $\{\eta_{m,r}\}_{m\in \Z}$ is also 2. 
For every $i,m$ with $I_i \cap \spt(\eta_{m,r})\neq \emptyset$, let $[\tld{a}_{i,m,r}, \tld{b}_{i,m,r}]=\overline{ T_\om^{(n)}(I_i \cap \spt(\eta_{m,r}))}$ and let $[\tld{a}'_{i,m,r}, \tld{b}'_{i,m,r}]=[\tld{a}_{i,m,r}-\del, \tld{b}_{i,m,r}+\del]$.

By definition of $\del$ and the fact that $T_\om^{(n)}$ is piecewise expanding, we know that $\tld{b}_{i}-\tld{a}_{i}>\del$. Thus,
\[
  [\tld{a}'_{i,m,r}, \tld{b}'_{i,m,r}]\subset [\tld{a}_{i}, \tld{b}_{i}] \cup [\tld{a}_{i}-\del, \tld{b}_{i}-\del] \cup [\tld{a}_{i}+\del, \tld{b}_{i}+\del].
\]
Recall that $C_e(T_\om^{(n)})$ is the intersection multiplicity of $\{ [\tld{a}_{i}, \tld{b}_{i}] \}_{1\leq i \leq \nint_\om^{(n)}}$, that $T_\om^{(n)}|_{I_i}$ is injective, 
and that the intersection multiplicity of the supports of $\{\eta_{m,r}\}_{m\in \Z}$ is 2.
Hence, the intersection multiplicity of $\{[\tld{a}'_{i,m,r}, \tld{b}'_{i,m,r}]\}_{m\in \Z, 1\leq i \leq \nint_\om^{(n)}}$ is at most $6 C_e(T_\om^{(n)})$.

Let $f\in \hpt$. 
For each $r>0$, $m\in \Z$ and $1\leq i \leq \nint_\om^{(n)}$, let 
\[
f_{i,m,r}:= \Big(1_{I_i} |DT_\om^{(n)}|^{-1}  \eta_{m,r} f\Big)\circ \xi_i.
\]
Hence, $\spt (f_{i,m,r}) \subset [\tld{a}_{i,m,r}, \tld{b}_{i,m,r}]$.

Assume that $r\geq 3$. Since $\spt(\eta_{m,r})\subset [r^{-1}(m-1), r^{-1}(m+1)]$, there are at most $r+3\leq 2r$ integers $m$ such that $\spt(\eta_{m,r})\cap [0,1]\neq \emptyset$.
Assume also that $r> 2\del^{-1}$. Then, the support of each $\eta_{m,r}$ intersects at most two intervals $I_i$. Let $\Ga_r:=\{(m,i): f_{i,m,r}\not\equiv 0, 1\leq i \leq \nint_\om^{(n)}, m\in \Z\}$.
Then, $\# \Ga_r \leq 4r$.

Set $r=3\del^{-1}$. (Note that since $\nint_\om^{(n)}\geq 2$, then $\del\leq \frac{1}{2}$ and so $r\geq 6$.) 
Now we apply Claim~\ref{lem:properSpt} with the collection $\{f_{i,m,r}\}$.
By the arguments above, we may choose $M=4r$, $\tld{M}=6C_e(T_\om^{(n)})$, $c=\del$ and $l=2r^{-1}\dist^n$, where $\dist$ is, as in \eqref{it:M2}, an upper bound on $\|DT_{\om'}\|_\infty$ for $\bbp$-almost every $\om'$. We get
\begin{align*}
  & \|D_t (\mcl_\om^{(n)}f)\|_{p}^p = \Big\|\sum_{i=1}^{\nint_\om^{(n)}} \sum_{m\in \Z}  D_t ( f_{i,m,r} ) \Big\|_{p}^p = \Big\| \sum_{(m,i)\in \Ga_r} D_t ( f_{i,m,r} ) \Big\|_{p}^p\\
& \quad \leq C_\# C_e(T_\om^{(n)})^{p-1}  \sum_{(m,i)\in \Ga_r} \big\| D_t (f_{i,m,r}) \big\|_p^p 
+ C_\#   \dist^{n(p-1)} \del^{1-p-pt} \sum_{(m,i)\in \Ga_r} \big\| f_{i,m,r}\big\|_p^p. 
\end{align*}
We recall from Theorem~\ref{thm:Str1} that $\|D_t g\|_p\leq C_\# \|g\|_{\hpt}$ for every $g\in \hpt$.
We now use Claim~\ref{it:boundPFhpt.1} combined with the fact that the support of each $\eta_{m,r}$ intersects at most two intervals $I_i$ to bound the first term; and changes of variables combined with the identity  $\sum_{m,i} 1_{I_i} \eta_{m,r}=1$ in $L_p$ to bound the second term. We get
\begin{align*}
   &\|D_t (\mcl_\om^{(n)}f)\|_{p}^p \\
&\leq C_\mc{T}   C_e(T_\om^{(n)})^{p-1}  \sum_{m\in \Z} \Big( \Big\||DT_\om^{(n)}|^{-1}\Big\|_{\infty}^{p-1}\Big\| \eta_{m,r} f  \Big\|^p_p +\Big\||DT_\om^{(n)}|^{-1}\Big\|_{\infty}^{p+pt-1}\Big\| \eta_{m,r} f \Big\|_{\hpt}^p \Big)\\
&+ C_\#  \dist^{n(p-1)} \del^{1-p-pt} \||DT_\om^{(n)}|^{-1}\|_{\infty}^{p-1} \|f\|_p^p.
\end{align*}
Finally, using Claim~\ref{it:boundPFhpt.2} we get
\begin{align*}
&\|D_t (\mcl_\om^{(n)}f)\|_{p}^p\\
 &\leq  C_{\mc{T}}   C_e(T_\om^{(n)})^{p-1} \Big( \Big\||DT_\om^{(n)}|^{-1}\Big\|_{\infty}^{p-1} \|f\|_p^p + \Big\||DT_\om^{(n)}|^{-1}\Big\|_{\infty}^{p+pt-1} \big( (1+ r^{pt})\|f\|_{p}^p+ \|f\|_{\hpt}^p \big) \Big)
\\
&+ C_\#  \dist^{n(p-1)} \del^{1-p-pt} \||DT_\om^{(n)}|^{-1}\|_{\infty}^{p-1} \|f\|_p^p.
\end{align*}

Combining with \eqref{eq:LpBoundPFOp}, we obtain
\begin{equation}\label{eq:LYunpert}
 \begin{split}
    &\|\mcl_\om^{(n)}f\|_{\hpt}^p \\
&\leq 
C_\mc{T} \cdot \big( C_e(T_\om^{(n)})\big)^{p-1} \||DT_\om^{(n)}|^{-1}\|_{\infty}^{p-1} 
\Big(\||DT_\om^{(n)}|^{-1}\|_{\infty}^{pt} \|f\|_{\hpt}^p + \dist^{n(p-1)} \del^{1-p-pt} \|f\|_p^p \Big).
 \end{split}
\end{equation}

The proof is concluded by 
letting  $A_{\mc{T}}=C_\mc{T}^{\frac{1}{p}}$ and 
\[
B_{\mc{T}, n}(\om)=\Big(C_\mc{T} \big( C_e(T_\om^{(n)})\big)^{p-1} \|DT_\om^{(n)}|^{-1}\|_{\infty}^{p-1} \dist^{n(p-1)} \del(\om, n)^{1-p-pt} \Big)^{\frac{1}{p}}.
\] 

\item \textit{Uniform $(\hpt, L_p)$ Lasota-Yorke inequality for $\mc{T}_k$.}
We extend the previous argument to the perturbed random map.
The main difference arises from the fact that the monotonicity partition for $T_{k,\om}^{(N)}$ may have more elements than that of $T_{\om}^{(N)}$. There will always be
 \textit{admissible} intervals, which can be matched to corresponding ones in the monotonicity partition for $T_{\om}^{(N)}$. There may also be \textit{non-admissible} ones, which may appear when $T^{(j-l)}_{\sigma^l\om}a_{i,\sigma^l\om}=a_{i',\sig^j\om}$
for some $i, i', j, l\in \N, \om\in \Om$. This is exactly as in the case of a single map (see \cite[\S3.3]{BaladiBook}).
We point out that \textit{admissibility} and \textit{non-admissibility} depends on the \textit{reference} map $T_{\om}^{(N)}$.

Condition \eqref{it:NPTP} prevents new branches from being created during the first $N$ steps. That is, there are no non-admissible elements for $T_{\om,k}^{(N)}$ (with respect to $T_{\om}^{(N)}$). 
Hence, the size of the shortest branch of $T_{k,\om}^{(N)}$, $\del(k,\om, N)$, is close to  $\del(\om, N)$ for sufficiently large $k$.
Thus, the argument from the previous step remains applicable for $\mc{T}_k$, for sufficiently large $k$. Noting also that $C_e(T_{k,\om}^{(N)})\leq \nint^N$ yields \eqref{eq:StrongLYi}.

Now we deal with condition \eqref{it:enoughExp}. 
First, for each $\om$ and $n\in \N$, there exists $\tld{\del}(\om,n)>0$ such that if $d_{\LY}(T_{\sig^j\om},S_j)<\tld{\del}(\om,n)$ for each $0\leq j < n$, and the maps $S_j$ satisfy \eqref{it:M1}--\eqref{it:M3} with the same constants as $\mc{T}$,
then for every non-admissible element $\eta$ of the monotonicity partition of $S^{(n)}:=S_{n-1} \circ \dots \circ S_0$ with respect to $T_{\om}^{(n)}$, one has that $\eta\subset \eta' \cup \eta''$, for some $\eta', \eta''$ elements of the monotonicity partition of $T_\om^{(n)}$. That is, all non-admissible intervals for $S^{(n)}$ are small compared to elements of the monotonicity partition of $T_\om^{(n)}$. The upshot of this is that, even though a group of up to $2^n$ branches may arise near each endpoint of the monotonicity partition of $T_\om^{(n)}$ from the perturbation, these groups will remain separated from each other if the perturbation is sufficiently small, depending on $T_{\om}^{(n)}$.

Compactness of $\Om$ and continuity of $\mc{T}$ ensure there exist $\om_1, \dots, \om_M\in \Om$ such that  
$\Om= \cup_{i=1}^M B_{\LY}(T_{\om_i}, \tld{\del}(T_{\om_i},n)/2 )$, where $B_{\LY}(T,\del)$ denotes the ball of radius $\del$ around $T$, measured with respect to $d_{\LY}$.

Let $\rho = \min_{1\leq i\leq M} \tld{\del}(T_{\om_i},n)/2$. Then, if $d_{\LY}(T_\om, T_{k,\om})<\rho$, one can 
follow the argument of the previous step, to obtain a Lasota-Yorke inequality for $T_{k,\om}^{(n)}$, with the main difference being that now more branches of $T_{k,\om}^{(n)}$ may intersect the support of each $\eta_{m,r}$, where $r$ is chosen so that at most two branches of $T_{\om_j}^{(n)}$ intersect the support of each $\eta_{m,r}$, for $1\leq j \leq M$. 
Specifically, $\Ga^k_r:=\{(m,i): f_{k,i,m,r}\not\equiv 0, 1\leq i \leq \nint_{k,\om}^{(n)}, m\in \Z\}$,
where $f_{k,i,m,r}:= \Big(1_{I_{k,i}} |DT_{k,\om}^{(n)}|^{-1}  \eta_{m,r} f \Big)\circ \xi_{k,i}$,
may contain several non-admissible branches of $T_{k,\om}^{(n)}$,
with respect to $T_{\om_j}^{(n)}$, for some $1\leq j \leq M$. Thus, by the previous paragraph, for sufficiently large $r$, $\# \Ga^k_r \leq 2^n 4r$, where the factor $4r$ is as in $\# \Ga_r$ of the previous step. 
This difference contributes a factor of $2^{np}$ on the right-hand side of \eqref{eq:LYunpert}. Making use of the fact that $C_e(T_{k,\om}^{(N)})\leq \nint^N$, \eqref{eq:StrongLYii} is established.\qed
\end{enumerate}
\end{proof}
\qed \end{proof}

\subsection{Numerical examples}\label{sec:numEx}
In this section we provide a brief demonstration that the stability results of \S\ref{Ssec:Ulam} and \S\ref{Ssec:PertByConvolution} can be used to rigorously approximate random invariant densities.

Let $\Omega$ be a circle of unit circumference let the driving system $\sigma:\Omega\circlearrowleft$ be a rigid rotation by angle $\alpha=1/\sqrt{2}$.
For $x\in \Omega$ considered to be a point in $[0,1)$, we define a random map as:
\begin{equation}
\label{mapeg}
T_\omega(x)=\left\{
              \begin{array}{ll}
                3(x-\omega)-2.9(x-\omega)(x-\omega-1/3), & \hbox{$\omega\le x<\omega+1/3$;} \\
                -3(x-\omega)+1-2.9(x-\omega-1/3)(x-\omega-2/3), & \hbox{$\omega+1/3\le x<\omega+2/3$;} \\
                7/3(x-\omega-2/3)+2\omega/9, & \hbox{$\omega+2/3\le x<\omega+1$.}
              \end{array}
            \right..
\end{equation}
Graphs of $T_\omega$ for three different $\omega$ are shown in Figure \ref{threemaps}.
\begin{center}
\begin{figure}[hbt]
  \begin{center}
  \includegraphics[width=17cm]{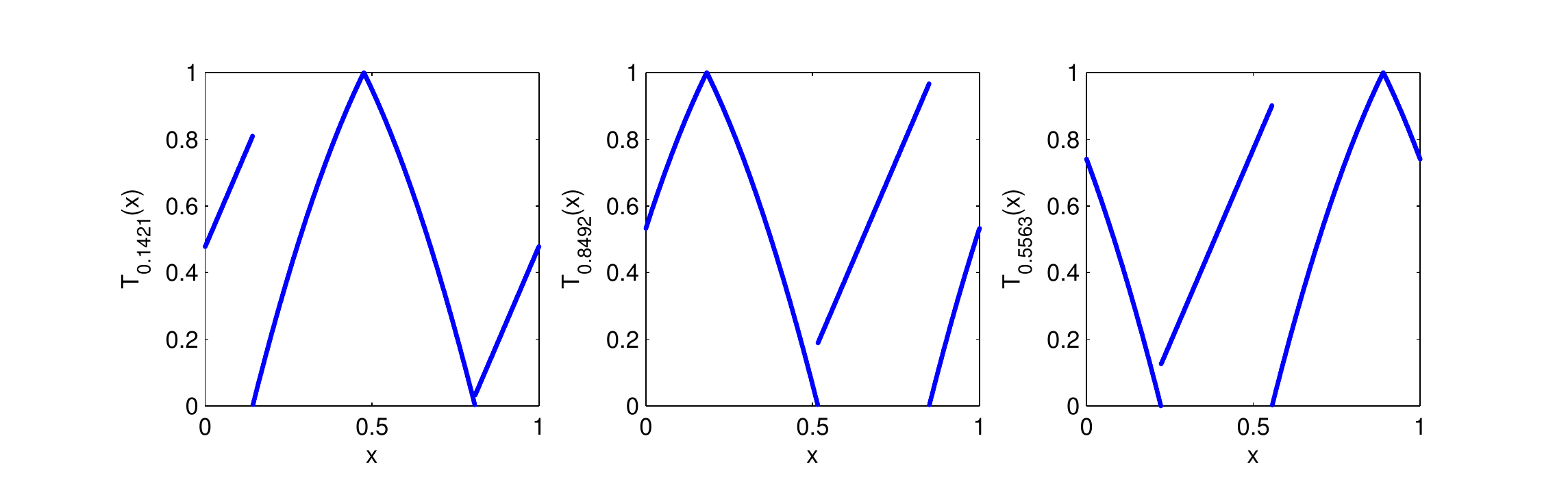}\\
  \caption{Graphs of the maps $T_{\sigma^{20}\omega}, T_{\sigma^{21}\omega}, T_{\sigma^{22}\omega}$, $\omega=0$.}
  \label{threemaps}
  \end{center}
\end{figure}
\end{center}
The graphs $T_\omega$ rotate with $\omega$ and one of three branches is also translated up/down with $\omega$.
The minimum slope of $\{T_\omega\}_{\omega\in\Omega}$ is bounded below by 2.

We employ the Ulam scheme with $k=1000$ (1000 equal subintervals) and a Fej\'{e}r kernel with $k=100$ (100 Fourier modes).
In the Ulam case, we use the well-known formula for the Ulam matrix to construct a matrix representation of $\mathcal{L}_{k,\omega}$:  $[\mathcal{L}_{k,\omega}]_{ij}=m(B_i\cap T^{-1}_\omega B_j)/m(B_j)$, $i,j=1,\ldots,k$, which is the result of Galerkin projection using the basis $\{\mathbf{1}_{B_1},\ldots,\mathbf{1}_{B_k}\}$.
Lebesgue measure in the formula for $[\mathcal{L}_{k,\omega}]_{ij}$ is approximated by a uniform grid of 1000 test points per subinterval, and the estimate of $[\mathcal{L}_{k,\omega}]$ takes less than a second to compute in MATLAB.
In the Fourier case, we first use Galerkin projection onto the basis $\{1,\sin(2\pi x),\cos(2\pi x),\ldots,\sin(2k\pi x),\cos(2k \pi x)\}$.
The relevant integrals are calculated using adaptive Gauss-Kronrod quadrature and we have limited the number of modes to $k=100$ to place an upper limit of 10 minutes of CPU time (on a standard dual-core processor) to calculate the Galerkin projection matrix $[\mathcal{L}'_{k,\omega}]$, representing the projected action of $\mathcal{L}_{\omega}$ on the first $k$ Fourier modes.
We then take a Ces\`{a}ro average to construct $[\mathcal{L}_{k,\omega}]=\frac{1}{k}\sum_{j=0}^{k-1}[\mathcal{L}'_{j,\omega}]$.
Estimates of $f_{k,\sigma^j\omega}, \omega=0, j=21, 21, 22$ are shown in Figure \ref{pushforward}.
\begin{center}
\begin{figure}[hbt]
  \includegraphics[width=17cm]{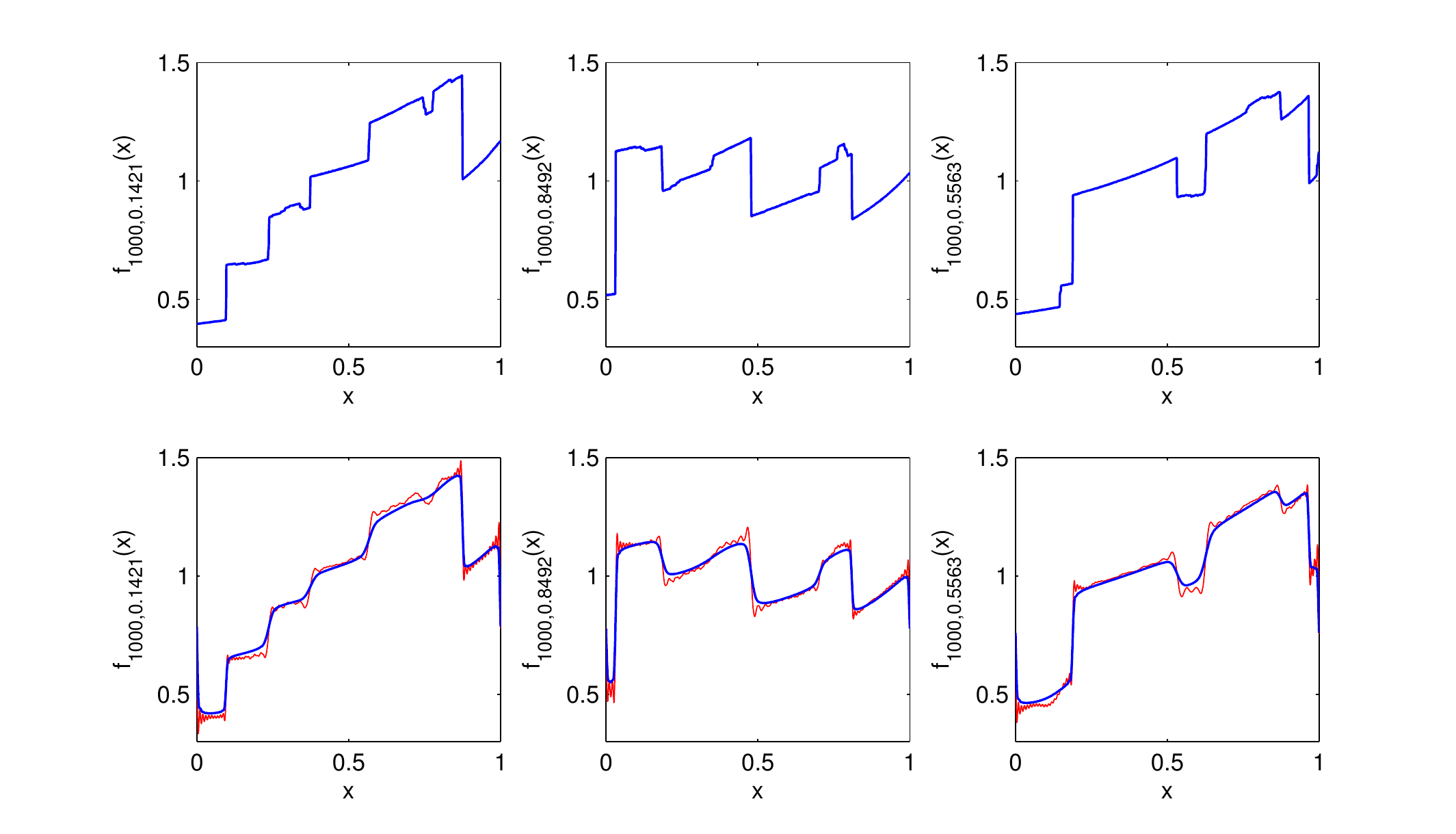}\\
  \caption{Estimates $f_{k,\sigma^j\omega}$, $\omega=0, j=20, 21, 22$ using Ulam's method (upper row) and Fej\'{e}r kernels (lower row, thick [blue, online]). The pure Galerkin estimates using the Galerkin Fourier matrices $[\mathcal{L}'_{k,\omega}]$ are shown as thinner [red, online] curves in the lower row.}\label{pushforward}
\end{figure}
\end{center}
The invariant density estimate $f_{k,\sigma^{20}\omega}$ was created by pushing forward Lebesgue measure (at ``time'' $\omega=0$) by $[\mathcal{L}_{k,\sigma^{19}\omega}]\circ\cdots\circ [\mathcal{L}_{k,\omega}]$, and then pushing two more steps for the estimates $f_{k,\sigma^{21}\omega}$ and $f_{k,\sigma^{22}\omega}$.
By inspecting Figures \ref{threemaps} and \ref{pushforward}, one can see how $\mathcal{L}_{\sigma^j\omega}$ transforms the estimate of $f_{\sigma^j\omega}$ to $f_{\sigma^{j+1}\omega}$, ($j=20,21$), particularly coarse features such as a change in the number of inverse branches.
Though the pure Galerkin estimates are more oscillatory, they appear to pick up more of the finer features than the smoother Fej\'{e}r kernel estimates.
The Ulam estimates are likely the most accurate, given the greater dimensionality of their approximation space.
While the Fourier-based estimates converge slowly in this example (relative to computing time), numerical tests on $C^\infty$ random maps demonstrated rapid convergence, with the Fourier approach taking full advantage of the system's smoothness, to the extent that the influence of modes higher than $k=20$ on the matrix $[\mathcal{L}'_{k,\omega}]$ was of the order of machine accuracy.

\section{Technical proofs}\label{S:techPfs}

\subsection{Proof of Lemma~\ref{lem:regApprox}}\label{pf:regApprox}
We start with a lemma about sequences of bounded variation.
Let $b=(b_j)$, indexed by $\Z$. We define its \emph{variation} by 
$\text{var}(b):=\sum_{j\in\Z}|b_j-b_{j-1}|$.

Let $(\phi_j)$ denote the standard orthonormal basis for $L^2(\mathbb T)$. 
That is, $\phi_j(x):=e^{2\pi i jx}$. 

For a bounded sequence $b$, define an operator on $L^p(\mathbb T)$ by
\begin{equation}\label{eq:defMultiplier}
M_b\colon \sum_j a_j\phi_j\to \sum_j a_jb_j\phi_j.
\end{equation}

\begin{lem}\label{lem:Lpbound}
Let $b=(b_j)$ be a sequence of non-negative reals such that $\text{var}(b)<\infty$
and $b_j\to 0$ as $j\to\pm\infty$.
Then for each $p>1$, $\|M_b\|_p\le C_p\var(b)$.
\end{lem}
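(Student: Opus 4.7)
The plan is to reduce $M_b$ to an absolutely convergent $\ell^1$-combination of modulated Riesz projections via Abel summation. The only analytic input needed is the classical $L^p(\T)$ boundedness of the Riesz projection $P_+f:=\sum_{j\ge 0}a_j\phi_j$ for $1<p<\infty$ (the M.~Riesz theorem / boundedness of the conjugate function); everything else is summation by parts.

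First, set $d_k:=b_k-b_{k-1}$, so that $\sum_{k\in\Z}|d_k|=\var(b)<\infty$. Because $b_j\to 0$ as $j\to-\infty$, telescoping gives $b_j=\sum_{k\le j}d_k$, and this series converges absolutely, uniformly in $j$. (One could equally use $b_j=-\sum_{k>j}d_k$ coming from $b_j\to 0$ as $j\to+\infty$; either decomposition works.)

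Second, for a trigonometric polynomial $f=\sum_j a_j\phi_j$, interchange the order of summation:
\begin{align*}
M_b f=\sum_j\Big(\sum_{k\le j}d_k\Big)a_j\phi_j=\sum_{k\in\Z}d_k\, P_k f,
\end{align*}
where $P_k f:=\sum_{j\ge k}a_j\phi_j$. Modulation in frequency is an $L^p$-isometry, and the identity $P_k f=\phi_k\cdot P_+(\phi_{-k}\cdot f)$ gives $\|P_k\|_{L^p\to L^p}\le\|P_+\|_{L^p\to L^p}=:C_p<\infty$ uniformly in $k\in\Z$.

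Third, since $\sum_k|d_k|\,\|P_k f\|_p\le C_p\var(b)\|f\|_p<\infty$, the representation of $M_bf$ above converges absolutely in $L^p(\T)$, and the triangle inequality yields $\|M_bf\|_p\le C_p\var(b)\|f\|_p$. This bound, proved on trigonometric polynomials, extends to $L^p$ by density, completing the argument. The main (and only) obstacle is the $L^p$ boundedness of $P_+$, which we cite; the hypothesis that $b_j$ is non-negative is not actually used and is presumably included for the sake of the applications in \S\ref{Ssec:PertByConvolution}.
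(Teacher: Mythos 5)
Your proof is correct, and it takes a genuinely different route from the paper's. The paper relies on a combinatorial decomposition (Lemma~\ref{lem:comb}, a layer-cake-type identity): since the $b_j$ are non-negative, the region $\bigcup_j \{j\}\times[0,b_j)$ can be tiled by rectangles $\{i,\dots,j\}\times I_{i,j}$, giving $b=\sum_{i,j} h_{i,j}\mathbf 1_{[i,j]}$ with $\sum_{i,j} h_{i,j}=\tfrac12\var(b)$; the bound then follows from the uniform $L^p$-boundedness of the band truncations $S_{[i,j]}$. Your argument replaces this geometric decomposition with Abel summation, writing $b_j=\sum_{k\le j}(b_k-b_{k-1})$ and hence $M_b=\sum_k d_k P_k$ with $\sum_k|d_k|=\var(b)$; the only analytic input is then the $L^p$-boundedness of the Riesz projection $P_+$, of which the $P_k$ are frequency modulations. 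Both proofs ultimately rest on the same hard fact (M.~Riesz's theorem, equivalently uniform $L^p$-boundedness of partial-sum operators), but yours is leaner: it avoids Lemma~\ref{lem:comb} entirely and, as you correctly observe, dispenses with the non-negativity hypothesis, which the paper's layer-cake construction genuinely uses (the intervals $[0,b_j)$ make no sense otherwise). The paper's route produces the marginally sharper constant $\tfrac12 C_p\var(b)$, but this is immaterial for the statement. A small presentational point: when you interchange the order of summation you should note that, for a trigonometric polynomial, the index $j$ ranges over a finite set while $\sum_{k\le j}|d_k|<\infty$, so the interchange is legitimate; your write-up gestures at this but does not quite spell it out.
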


The following auxiliary result will be used in the proof.
\begin{lem}\label{lem:comb}
Let $(b_j)$ be as in the statement of Lemma \ref{lem:Lpbound}.
Define sets $\mc{S}_1$ and $\mc{S}_2$ as follows:
\begin{align*}
\mc{S}_1&=\bigcup_j\{j\}\times [0,b_j)\\
\mc{S}_2&=\bigcup_i\bigcup_{j\ge i}\left(\{i,i+1,\ldots,j\}\times [\max(b_{i-1},b_{j+1}),\min(b_i,b_{i+1},\ldots,b_j))\right),
\end{align*}
Then $\mc{S}_1=\mc{S}_2$ and the union in $\mc{S}_2$ is a disjoint union. 
Writing $I_{i,j}$ for 
\[
 [\max(b_{i-1},b_{j+1}),\min(b_i,b_{i+1},\ldots,b_j)),
\]
with the convention that
$[c,d)$ is empty if $d\le c$, and setting $h_{i,j}=|I_{i,j}|$, we have 
$\sum_{i,j}h_{i,j}=\frac12\text{var}(b)$.
\end{lem}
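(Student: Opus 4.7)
\medskip

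\noindent\textbf{Proof proposal.} The plan has three stages: (i) show $\mc{S}_1 \subseteq \mc{S}_2$ by constructing the pair $(i,j)$ that contains each point; (ii) show the reverse inclusion and disjointness directly from the definitions; (iii) evaluate $\sum_{i,j} h_{i,j}$ by a layer-cake argument tied to the variation formula.

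For stage (i), given $(k,y) \in \mc{S}_1$, i.e.\ $0 \le y < b_k$, I would define
\[
i^* := \min\{i \le k : b_l > y \text{ for all } i \le l \le k\}, \qquad j^* := \max\{j \ge k : b_l > y \text{ for all } k \le l \le j\}.
\]
Both sets contain $k$, and since $b_l \to 0$ as $|l|\to\infty$ while $y \ge 0$, both are bounded, so $i^*, j^*$ are well-defined finite integers. By construction $\min(b_{i^*},\dots,b_{j^*}) > y$ and $b_{i^*-1}, b_{j^*+1} \le y$, which gives $y \in I_{i^*,j^*}$ and $k \in \{i^*,\dots,j^*\}$. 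For the reverse inclusion, if $(k,y) \in \{i,\dots,j\} \times I_{i,j}$, then $y < \min(b_i,\dots,b_j) \le b_k$, so $(k,y) \in \mc{S}_1$. Disjointness: if $(k,y)$ belongs to both $\{i_1,\dots,j_1\}\times I_{i_1,j_1}$ and $\{i_2,\dots,j_2\}\times I_{i_2,j_2}$, assume without loss of generality $i_1 \le i_2$; if $i_1 < i_2$, then $i_2 - 1 \in \{i_1,\dots,j_1\}$ (it lies below $i_2 \le k \le j_1$ and at or above $i_1$), so $b_{i_2-1} > y$, contradicting $y \ge b_{i_2-1}$ from $y \in I_{i_2,j_2}$. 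Thus $i_1 = i_2$, and symmetrically $j_1 = j_2$.

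For stage (iii), I would introduce, for each $y \ge 0$, the level-set count $C(y) := \#\{\text{connected components of } \{j \in \Z : b_j > y\}\}$. Finiteness of $C(y)$ for $y > 0$ follows from $b_j \to 0$. By the decomposition already established, for each $y > 0$ the non-empty intervals $I_{i,j}$ containing $y$ are in bijection with the components of $\{b > y\}$ (the endpoints $(i,j)$ are precisely the left and right edges of a component), so Fubini gives
\begin{equation*}
\sum_{i \le j} h_{i,j} = \int_0^\infty \#\{(i,j) : y \in I_{i,j}\}\, dy = \int_0^\infty C(y)\, dy.
\end{equation*}

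Finally I would obtain the variation identity by a second layer-cake computation: writing $|b_j - b_{j-1}| = \int_0^\infty \mathbf{1}[\min(b_{j-1},b_j) \le y < \max(b_{j-1},b_j)]\, dy$ and summing,
\begin{equation*}
\var(b) = \int_0^\infty N(y)\, dy, \qquad N(y) := \#\{j : \text{exactly one of } b_{j-1}, b_j \text{ exceeds } y\}.
\end{equation*}
Each component of $\{b > y\}$ is a finite integer interval $[a,c]$ (using $b_j \to 0$) contributing exactly the two boundary edges $(a-1,a)$ and $(c,c+1)$ to $N(y)$, so $N(y) = 2 C(y)$. Combining gives $\var(b) = 2\sum_{i,j} h_{i,j}$. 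I expect the first stage (pinning down $(i^*,j^*)$ and verifying disjointness cleanly) to be the main bookkeeping obstacle; the layer-cake identification $N(y) = 2C(y)$ is the conceptual heart, but it is immediate once one observes that $b_j \to 0$ forces every component to be bounded with exactly two neighboring ``off'' edges.
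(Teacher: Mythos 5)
Your proof is correct. Note that the paper does not actually supply a proof of this lemma---it is stated, illustrated by Figure~\ref{fig:sequence}, and then used directly in the proof of Lemma~\ref{lem:Lpbound}---so there is no written argument in the paper against which to compare. What your argument does is formalize precisely the geometric content of that figure: the region under the graph of $b$ (the set $\mc{S}_1$, a union of vertical columns) is re-sliced horizontally into maximal rectangles $\{i,\ldots,j\}\times I_{i,j}$, and the rectangles at a given height $y$ correspond bijectively to the connected components of the superlevel set $\{l\colon b_l>y\}$. Your layer-cake computation then turns that bijection into the identity $\sum h_{i,j}=\int_0^\infty C(y)\,dy$, and the second layer-cake identity $\var(b)=\int_0^\infty N(y)\,dy$ with $N(y)=2C(y)$ (each bounded component contributing exactly two boundary edges) gives the factor $\tfrac12$. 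All the supporting details you supply are sound: the hypothesis $b_j\to 0$ guarantees $i^*,j^*$ are well defined and that every component is a finite interval with two off-edges; the disjointness argument correctly exploits that $k\in\{i_1,\dots,j_1\}\cap\{i_2,\dots,j_2\}$ pins both ranges to the same component; and the interchange of sum and integral is justified by Tonelli since all integrands are non-negative (with finiteness then forced by $\var(b)<\infty$). This is a complete and clean filling-in of a proof the authors left to the reader.
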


The content of this lemma is illustrated in Figure~\ref{fig:sequence}.
\begin{center}
\begin{figure}[hbt]
\begin{center}
\includegraphics[width=4in]{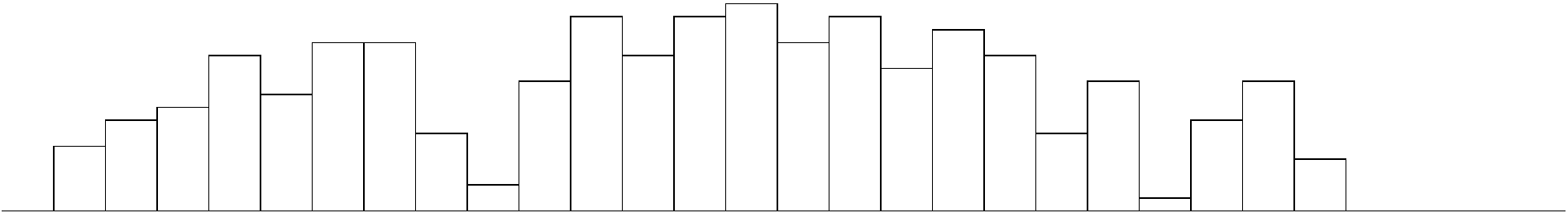}
\vskip3ex
\includegraphics[width=4in]{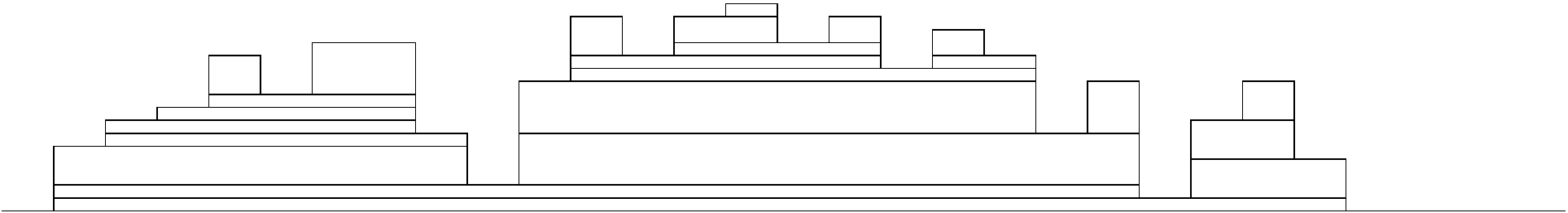}
\caption{Illustration of Lemma~\ref{lem:comb}.}
\label{fig:sequence}
\end{center}
\end{figure}
\end{center}

\begin{proof}[Proof of Lemma \ref{lem:Lpbound}]
Consider $b\colon \Z\to[0,\infty)$ as a function on the integers. By Lemma~\ref{lem:comb}, we can write
\begin{equation*}
b=\sum_{i,j}h_{i,j}\mathbf 1_{[i,j]},
\end{equation*}
where $h_{i,j}$ is given by $|I_{i,j}|$, the length of the interval defined in Lemma~\ref{lem:comb}.
In particular, we deduce
\begin{equation*}
M_b=\sum_{i,j}h_{i,j}S_{[i,j]},
\end{equation*}
where $S_{[i,j]}(f):= \sum_{l=i}^j a_l\phi_l$. 
Thus, $\|M_b\|_p\le \sum_{i,j}h_{i,j}C_p=\frac12\text{var}(b)C_p$, where $C_p$ is a uniform bound on $\|S_k\|_p$, and $S_k$ is the truncated Fourier series  $S_k(f):=\sum_{|j|\leq k} a_j\phi_j$.
\qed \end{proof}

\begin{cor}\label{cor:FourierMultiplier}
Let $(b_j)$ be as in the statement of Lemma \ref{lem:Lpbound}.
 Suppose $(b_j)$ is piecewise monotonic with at most $K$ pieces. Then,
$\|M_b\|_p \leq C_p K \|b\|_\infty$.
\end{cor}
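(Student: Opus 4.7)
The plan is to reduce Corollary~\ref{cor:FourierMultiplier} to the already-established Lemma~\ref{lem:Lpbound} by bounding $\var(b)$ in terms of $K$ and $\|b\|_\infty$. Since the lemma gives $\|M_b\|_p \leq C_p \var(b)$, it suffices to show $\var(b) \leq K \|b\|_\infty$ whenever $b$ is piecewise monotonic with at most $K$ monotone pieces.

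Concretely, I would partition $\Z$ into at most $K$ consecutive intervals $[j_{k-1}, j_k]$, $k=1,\dots,K$, on which $b$ is monotonic (with the convention $j_0 = -\infty$, $j_K = +\infty$, and $b_{\pm\infty} := 0$, which is justified because $b_j \to 0$ at $\pm\infty$). On each such piece the variation telescopes: by monotonicity,
\[
\sum_{j = j_{k-1}+1}^{j_k} |b_j - b_{j-1}| \;=\; |b_{j_k} - b_{j_{k-1}}|.
\]
Since $(b_j)$ is non-negative, both $b_{j_{k-1}}$ and $b_{j_k}$ lie in $[0, \|b\|_\infty]$, so each of these $K$ contributions is bounded by $\|b\|_\infty$. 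Summing, $\var(b) \leq K \|b\|_\infty$.

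Combining with Lemma~\ref{lem:Lpbound} yields $\|M_b\|_p \leq C_p \var(b) \leq C_p K \|b\|_\infty$, which is exactly the claim (with the same constant $C_p$). There is no real obstacle here; the only point that merits a moment of care is the bookkeeping at the ``boundary'' between pieces, which is handled cleanly by declaring the sequence to vanish at $\pm\infty$ and reading off the variation on each monotone block as the absolute difference of its endpoint values.
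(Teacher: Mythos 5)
Your proof is correct and is exactly the argument the paper implicitly intends: the corollary is stated with no proof, precisely because it follows from Lemma~\ref{lem:Lpbound} once one observes that a non-negative sequence vanishing at $\pm\infty$ and piecewise monotone in $K$ pieces satisfies $\var(b)\le K\|b\|_\infty$, by telescoping on each monotone block. Your handling of the boundary bookkeeping via $b_{\pm\infty}:=0$ is the right way to make the telescoping clean, and nothing further is needed.
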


\begin{proof}[Proof of  Lemma~\ref{lem:regApprox}]
Let $\ep>0$, and $b_{j,\ep}=\langle j \rangle^{\frac{t'-t}{2}} (1-e^{-\ep\langle j \rangle})$.
Then, 
\begin{equation}\label{eq:errorRegApprox}
  |f-f_\ep|=\Big\| \sum_{j=1}^\infty b_{j,\ep} \langle j \rangle^{\frac{t}{2}} \hat{f}(j)\phi_j \Big\|_p =\|M_b (J_t f)\|_p \leq \|M_b\|_p\|f\|_{\hpt},
\end{equation}
where $M_b$ is the operator defined in \eqref{eq:defMultiplier},
and $J_t: \hptorus \to L^p(\T)$ is given by $J_t(f):=\mc{F}^{-1}m_t\mc{F}(f)$, with $m_t(\xi)=\langle \xi\rangle^{\frac{t}{2}}=(1+|\xi|^2)^{\frac{t}{2}}$.

For $0<\ga<1$, let $h(x)=x^{-\ga}(1-e^{-\ep x})$. One can check that $h$ has two
intervals of monotonicity. For $x<1/\ep$, one has $h(x)\leq \ep x^{1-\ga}$,
while for $x\ge 1/\ep$, one has $h(x)\leq x^{-\ga}$. In particular, one has
$\|h\|_\infty \leq\ep^{\ga}$.

Using the above with $\ga=\frac{t-t'}{2}$, the lemma follows from \eqref{eq:errorRegApprox} and Corollary~\ref{cor:FourierMultiplier}.\qed
\end{proof}

\subsection{Boundedness of $\E_k$ in $\hpt$.}\label{S:pfBddEnHpt}

In order to demonstrate this, we shall make use of a theorem of
Strichartz \cite{Strichartz}.

\begin{thm}[Strichartz]\label{thm:Strichartz2}
Let $p>1$ and $0<t<1$, and $f:\R \to \R$ with $\spt{f}\subseteq [0,1]$.
Then $f\in\hpt$ if and only if
$\|f\|_p+\|S_tf\|_p<\infty$ and the implied norm is equivalent to the
standard $\hpt$ norm, where $S_tf$ is given by
\begin{equation}\label{eq:St}
S_tf(x)=\left(\int_{0}^{\infty}\frac{dr}{r^{1+2t}}\left(
\int_{-1}^1|f(x+ry)-f(x)|\,dy\right)^2\right)^{1/2}.
\end{equation}
\end{thm}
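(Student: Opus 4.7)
The plan is to derive this characterization via Littlewood--Paley theory, using that for $1<p<\infty$ and $0<t<1$ the space $\hpt$ coincides, with equivalent norms, with the Triebel--Lizorkin space $F^t_{p,2}$. Fix a Littlewood--Paley partition of unity $\{\psi_k\}_{k\ge 0}$ with Fourier supports in dyadic annuli $\{|\xi|\sim 2^k\}$ (and $\psi_0$ supported near the origin), set $f_k:=\psi_k * f$, and let $M$ denote the Hardy--Littlewood maximal operator. The starting point is the classical identity $\|f\|_{\hpt}$ is comparable to $\|(\sum_k 4^{tk}|f_k|^2)^{1/2}\|_p$, which already delivers the target norm in the form of a Fourier-localized square function.

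For the upper bound $\|S_t f\|_p\le C_\# \|f\|_{\hpt}$, I would split the inner average at scale $r\sim 2^{-k}$: for $r\le 2^{-k}$, use the Bernstein-type bound $|f_k(x+ry)-f_k(x)|\le r|y|\,\|\nabla f_k\|_\infty$ together with $|\nabla f_k|(x)\le C_\# 2^k M f_k(x)$; for $r\ge 2^{-k}$, use the crude bound $2 M f_k(x)$. Squaring, integrating against $dr/r^{1+2t}$, and summing over $k$ gives
\[
S_t f(x)^2 \le C_\#\sum_k 4^{tk}(M f_k(x))^2,
\]
and the required $L^p$ bound then follows from the vector-valued Fefferman--Stein maximal inequality combined with the Littlewood--Paley characterization.

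For the lower bound $\|f\|_{\hpt}\le C_\# (\|f\|_p+\|S_t f\|_p)$, I would aim at a pointwise domination $g_t f\le C_\# M(S_t f)$, where $g_t f:=(\sum_k 4^{tk}|f_k|^2)^{1/2}$. The key device is a Calder\'on-type reproducing identity expressing $f_k(x)$ as a weighted average of the increments $f(x+ry)-f(x)$ for $r$ in a dyadic window around $2^{-k}$ and $y\in[-1,1]$; this exploits the cancellation $\int \psi_k=0$ for $k\ge 1$. A Cauchy--Schwarz in $r$ (against the weight $r^{-1-2t}$) then converts the $L^1_y$ increment average into the $L^2_r$ square function $S_t f$, after which another application of the Littlewood--Paley characterization closes the estimate.

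The principal obstacle will be this second direction: because $S_t f$ already absorbs the increments into an $L^1_y$ average, recovering a Fourier-localized square function requires a reproducing formula with matched cancellation and decay, and one must verify that the Cauchy--Schwarz step does not lose in $p$. In practice, Strichartz's original argument in \cite{Strichartz} proceeds somewhat differently, via direct Fourier identities and a truncation argument, and the author will simply invoke that result.
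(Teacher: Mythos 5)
The paper does not prove this statement at all: it is quoted verbatim as a theorem of Strichartz and invoked as a black box, with a citation to \cite{Strichartz} and no further argument. You recognize this correctly at the end of your sketch, so there is no ``paper's proof'' to compare against, only Strichartz's original. That said, your Littlewood--Paley route is a legitimate modern alternative and essentially sound. For the upper bound, the splitting of the inner average at $r\sim 2^{-k}$, the bound $|f_k(x+ry)-f_k(x)|\le C\min(r|y|2^k,1)Mf_k(x)$, and a Cauchy--Schwarz with weight $\min(r2^k,1)2^{-tk}$ (whose sum over $k$ is $O(r^t)$) do produce $S_tf(x)^2\lesssim \sum_k 4^{tk}(Mf_k(x))^2$, after which Fefferman--Stein closes the estimate. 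For the lower bound you are overcautious: writing $f_k(x)=\int\psi_k(z)(f(x-z)-f(x))\,dz$ (using $\int\psi_k=0$), splitting $z$ into dyadic annuli of radius $2^{j-k}$, and using the decay of $\psi_k$ gives $2^{tk}|f_k(x)|\lesssim\sum_{m\le k}2^{-c(k-m)}2^{tm}A_{2^{-m}}(x)$ with $A_r(x):=\int_{-1}^1|f(x+ry)-f(x)|\,dy$ and $c=N-1-t>0$; Cauchy--Schwarz in $m$, summing in $k$, and the elementary monotonicity $A_{2^{-m}}\le 2A_r$ for $r\in[2^{-m},2^{-m+1}]$ give the \emph{pointwise} bound $\bigl(\sum_{k\ge1}4^{tk}|f_k(x)|^2\bigr)^{1/2}\lesssim S_tf(x)$, with the $k=0$ piece absorbed by $\|f\|_p$ via $|f_0|\le CMf$. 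So the maximal-function detour you propose in the lower bound, and the concern you flag about it, are unnecessary; the rest of the sketch is correct, but of course the paper itself simply cites the result.
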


\begin{proof}[Proof of Lemma \ref{thm:hptbound}]
We shall use the notation $A\lesssim B$ to indicate that the
quantity $A$ is bounded by a constant multiple of the quantity $B$,
where the constant is independent of $k$ and any function to which the
inequality is being applied.

Let $k$ be fixed (although we ensure that all bounds that we give are
independent of $k$). For $x\in [0,1)$, let $j(x)$ denote the index of
the interval to which $x$ belongs. That is $j(x)=\lfloor kx\rfloor$.

We have $\|\E_kf\|_p\le \|f\|_p$ so it suffices to show that
$\|S_t(\E_kf)\|_p\lesssim \|S_tf\|_p$.

We let $H_rf(x)$ be the outer integrand in $S_tf(x)$, that is
\begin{equation}\label{eq:Hr}
H_rf(x)=\frac 1{r^{1+2t}}\left(\int_{-1}^1|f(x+ry)-f(x)|\,dy\right)^2.
\end{equation}

Notice that $S_tf(x)\le S_t^{(1)}f(x)+S_t^{(2)}f(x)$, where
\begin{align*}
S_t^{(1)}f(x)&=\left(\int_0^{1/(2k)}H_rf(x)\,dr\right)^{1/2}\text{ and }\\
S_t^{(2)}f(x)&=\left(\int_{1/(2k)}^\infty H_rf(x)\,dr\right)^{1/2}.
\end{align*}

We start by establishing an inequality that we use several times. Let
$I_j$ denote the interval $[(j-1)/k,j/k)$. 

\begin{myclaim}\label{claim:f-Ef}
  \begin{equation}\label{eq:claimone}
    \int_0^1|\E_kf(x)-f(x)|^p\, dx
    \lesssim
    k^{-pt}\int_0^1dx\,\left(\int_{2/k}^{3/k}H_rf(x)\,dr\right)^{p/2}.
  \end{equation}
\end{myclaim}

\begin{proof}
  Let $x\in [0,1]$. If $\frac 2k\le r\le \frac 3k$, then 
\begin{align*}
H_rf(x)&\gtrsim k^{1+2t}\left(\int_{-1}^1|f(x+ry)-f(x)|\,dy\right)^2\\
&\gtrsim k^{3+2t}\left(\int_{x-r}^{x+r}|f(s)-f(x)|\,ds\right)^2\\
&\ge k^{3+2t}\left(\int_{I_{j(x)}}|f(s)-f(x)|\,ds\right)^2\\
&\ge k^{3+2t}\left(\int_{I_{j(x)}}|\E_kf(s)-f(x)|\,ds\right)^2\\
&=k^{1+2t}|\E_kf(x)-f(x)|^2.
\end{align*}
Integrating in $r$ over the range $[\frac2k,\frac 3k]$ and raising to
the $p/2$ power, we obtain
\begin{equation*}
|\E_kf(x)-f(x)|^p\lesssim k^{-pt}\left(\int_{2/k}^{3/k}H_rf(x)\,dr\right)^{p/2},
\end{equation*}
which establishes the claim upon integrating with respect to $x$.\qed
\end{proof}

For a function $f(x)$, let $f_j$ denote the value of $\E_kf$ on the
interval $I_j$. Recalling definitions \eqref{eq:Hr} and \eqref{eq:St} of $H_r$ and $S_t$, the above implies the inequality

\begin{equation}\label{eq:f-Ef}
k^{pt}\int_0^1|f_{j(x)}(x)-f(x)|^p\,dx\lesssim \|S_tf\|_p^p.
\end{equation}

Straightforward modifications also establish the inequality
\begin{equation}\label{eq:f-TEf}
k^{pt}\int_0^1|f_{j(x)+1}(x)-f(x)|^p\,dx\lesssim \|S_tf\|_p^p.
\end{equation}

We now estimate $S_t^{(2)}\E_kf(x)$. Letting $r>1/(2k)$, we have
\begin{align*}
&H_r(\E_kf)(x)=\frac{1}{r^{1+2t}}
\left(\int_{-1}^1|\E_kf(x+ry)-\E_kf(x)|\,dy\right)^2\\
&\lesssim\frac{(f(x)-\E_kf(x))^2}{r^{1+2t}}
+ 
\frac{1}{r^{1+2t}}\left(\int_{-1}^1|\E_kf(x+ry)-f(x)|\,dy\right)^2.
\end{align*}

Hence we have
\begin{equation}
\begin{split}\label{eq:splitSt2}
S_t^{(2)}(\E_kf)(x)\lesssim&
\left(|f(x)-\E_kf(x)|^2\int_{1/(2k)}^\infty\frac1{r^{1+2t}}\,dr\right)^{1/2}\\
&+\left(\int_{1/(2k)}^\infty\frac
     {(\int_{-1}^1|\E_kf(x+ry)-f(x)|\,dy)^2}
     {r^{1+2t}}\,dr\right)^{1/2}\\
&\sim k^t|f(x)-\E_kf(x)|+(*),
\end{split}
\end{equation}
where $(*)$ denotes the term on the second line of the inequality.

We then estimate (*) as follows.
\begin{align*}
&\int_{-1}^1|\E_kf(x+ry)-f(x)|\,dy\\
&=\frac{1}{2r}\int_{x-r}^{x+r}|\E_kf(s)-f(x)|\,ds\\
&\lesssim\frac{1}{2r}\sum_{\{j\colon I_j\cap [x-r,x+r]\ne\emptyset\}}
\int_{I_j}|\E_kf(s)-f(x)|\,ds\\
&\le \frac{1}{2r}\sum_{\{j\colon I_j\cap [x-r,x+r]\ne\emptyset\}}\int_{I_j}|f(s)-f(x)|\,ds\\
&\lesssim \int_{-1}^1\left|f\left(x+(r+\tfrac1k)y\right)-f(x)\right|\,dy,
\end{align*}
so that 
$(*)\lesssim\left( \int_{1/(2k)}^\infty H_{r+\frac1k}f(x)\,dr\right)^{1/2}\le S^{(2)}_tf(x)$.
Hence, by Theorem~\ref{thm:Strichartz2} and definition of $S^{(2)}_tf$, $\|(*)\|_p\lesssim \|f\|_{\hpt}$.
Combining this with \eqref{eq:splitSt2} and \eqref{eq:f-Ef}, we deduce
$\|S_t^{(2)}(\E_kf)\|_p\lesssim \|f\|_{\hpt}$.

It remains to show that $\|S_t^{(1)}(\E_kf)\|_p\lesssim \|f\|_{\hpt}$.
Let $x=\frac jk-h$, where we assume $h<1/(2k)$ (the other case being similar).
We have $H_r\E_kf(x)=0$ if $r\le h$ and, recalling that $r\leq 1/(2k)$, $H_r\E_kf(x)\le |f_{j+1}-f_j|^2/r^{1+2t}$ if $r>h$.

Hence 
\begin{align*}
S_t^{(1)}\E_kf(x)&\le |f_{j+1}-f_j|\left(\int_{h}^{1/(2k)}\frac{1}{r^{1+2t}}\,dr\right)^{1/2}\\
&\lesssim |f_{j+1}-f_j|h^{-t}.
\end{align*} 

Integrating the $p$th power, we see
\begin{align*}
\|S_t^{(1)}\E_kf\|_p^p&\lesssim \sum_j|f_{j+1}-f_j|^pk^{pt-1}\\
&\sim k^{pt}\int_0^1 |f_{j(x)+1}-f_j(x)|^p\,dx\\
&\sim k^{pt}\left(\int_0^1 |f_{j(x)+1}-f(x)|^p\,dx+\int_0^1 |f_{j(x)}-f(x)|^p\,dx\right).
\end{align*}

The desired bound then follows from \eqref{eq:f-Ef} and \eqref{eq:f-TEf}.
\qed \end{proof}

\subsection{Proof of Lemma~\ref{lem:UlamError4Holder}}\label{pf:UlamError4Holder}

Let $g\in C^\gamma$ and $t<\min(\gamma,1/p)$. We will show that $\|(\mathbb E_k-1)g\|_{\hpt}
\le C_\#\|g\|_{C^\gamma}k^{t-\gamma}$.

  We use the Strichartz equivalent characterization of $\hpt$ of Theorem~~\ref{thm:Strichartz2} again.
  Let $x\in[0,1]$ be at a distance $s$ from one of the endpoints of 
  the partition of the interval into subintervals of length $1/k$.
  Let $g\in C^\gamma$ and let $h=\mathbb E_kg-g$. We check that
  $|h|(z)\le \|g\|_\gamma k^{-\gamma}$ for all $z$.

  We have $\|h\|_{H^p_t}\approx \|h\|_{p}+\|S_th\|_{p}$ where
\begin{equation*}
  S_th(x)=\left(\int_0^\infty \frac{dr}{r^{1+2t}}
  \left(\int_{-1}^1|h(x+ry)-h(x)|\,dy\right)^2\right)^{1/2}.
\end{equation*}

We split the integration over the ranges $[0,s]$ and $[s,\infty)$:
\begin{align*}
& \left(\int_0^s \frac{dr}{r^{1+2t}}
 \left(\int_{-1}^1|h(x+ry)-h(x)|\,dy\right)^2\right)^{1/2}\\
&=\left(\int_0^s \frac{dr}{r^{1+2t}}
 \left(\int_{-1}^1|g(x+ry)-g(x)|\,dy\right)^2\right)^{1/2}\\
&\le \left(\int_0^s \frac{dr}{r^{1+2t}}\left(\int_{-1}^1
 \|g\|_{C^\gamma}|ry|^{\gamma} \,dy \right)^2 \right)^{1/2}\\
&=C_\#\|g\|_{C^\gamma}\left(\int_0^s dr\ 
 r^{2\gamma-1-2t}\right)^{1/2}\le C_\#\|g\|_{C^\gamma}k^{t-\gamma}.
\end{align*}

Using the uniform bound on $h$, we have
\begin{equation*}
\left(\int_s^\infty \frac {dr}{r^{1+2t}}
\left(\int_{-1}^1|h(x+ry)-h(x)|\,dy\right)^2\right)^{1/2}\\
\le C_\#\|g\|_{C^\gamma}k^{-\gamma}s^{-t}.
\end{equation*}

Since the $L^p$ norm of each part is of the form
$C_\#\|g\|_{C^\gamma}k^{t-\gamma}$, the desired result is obtained.\qquad \qed

\subsection{Proof of Claim~\ref{lem:properSpt}}\label{pf:properSpt}
Let $A=\{ y: |y| \geq c \}$. Using that $\spt(f) \subset [a,b]$, Jensen's inequality and Fubini's theorem we get 
\begin{align*}
  \|D_t f &- 1_{[a', b']} D_t f \|_p^{p} = \int_{\R\setminus [a', b']} \Big | \int \frac{f(x+h) -f(x)}{|h|^{1+t}}dh \Big |^{p} dx \ \\
& \leq \int_{\R\setminus [a', b']} \Big | \int \frac{1_A(h)f(x+h)}{|h|^{1+t}}dh \Big |^{p} \,dx 
 \leq |b-a|^{p-1} \int_{\R\setminus [a', b']} \int \frac{1_A(h)|f(x+h)|^{p}}{|h|^{p+tp}}\,dh \,dx \ \\
& \leq |b-a|^{p-1} \int \frac{1_A(h)}{|h|^{p+tp}} \int_{\R\setminus [a', b']} |f(x+h)|^{p} \,dx \,dh 
 \leq \frac{2|b-a|^{p-1} c^{1-p-pt}}{p+pt-1} \|f\|_p^{p},
\end{align*}
which gives the first part. For the second part, we first apply the triangle inequality to get
\begin{align*}
&\Big \|\sum_{j=1}^M D_t f_j \Big \|_p \leq \Big \|\sum_{j=1}^M  1_{[a'_j, b'_j]} D_t f_j \Big \|_p + \sum_{j=1}^M \|D_t f_j - 1_{[a'_j, b'_j]} D_t f_j\|_p.
\end{align*}
We now use the following inequality, valid for non-negative $x, y$, $(x+y)^{p} \leq 2^{p-1}(x^{p}+y^{p})$,  the fact that the intersection multiplicity of $\{[a'_j,b'_j] \}_{1\leq j \leq M}$ is $\tld{M}$ to bound the $p$-th power of the first sum, and the previous part to bound the $p$-th power of the second sum. We get
\begin{align*}
&\Big \|\sum_{j=1}^M D_t f_j \Big \|_{p}^{p} \leq C_\# \tld{M}^{p-1}\sum_{j=1}^M  \|1_{[a'_j, b'_j]} D_t f_j \|_p^{p} + C_\# l^{p-1}c^{1-p-pt} \Big( \sum_{j=1}^M \|f_j\|_p \Big)^{p}.
\end{align*}
Finally, using the fact that for non-negative numbers $x_i$, we have that $(\sum_{i=1}^M x_i)^{p} \leq M^{p-1}\sum_{i=1}^M x_i^{p}$ to bound the second sum. We obtain
\begin{align*}
&\Big \|\sum_{j=1}^M D_t f_j \Big \|_{p}^{p} \leq  C_\# \tld{M}^{p-1}\sum_{j=1}^M  \|D_t f_j \|_p^{p} + C_\# (Ml)^{p-1}c^{1-p-pt} \sum_{j=1}^M \|f_j\|_p^{p}. \quad \qed
\end{align*}

\subsection{Proof of Claim~\ref{it:boundPFhpt.1}}\label{S:pfboundPFhpt.1}
  Fix $x_0\in \overline{I_i}$ maximizing $|A_i|$, where $A_i:=D\xi_i(x_0)$. Then,
\begin{align*}
\|(1_{I_i} |DT_\om^{(n)}|^{-1} u)\circ \xi_i\|_{\hpt}^p= \|(1_{I_i} |DT_\om^{(n)}|^{-1} u)\circ A_i \circ (A_i^{-1} \circ \xi_i)\|_{\hpt}^p. 
\end{align*}

Using that $A_i$ is linear in the equality and \cite[Lemmas 3.4 and 3.5]{GTQuas}
in the second inequality, we get 
\begin{align*}
&\|(1_{I_i} |DT_\om^{(n)}|^{-1} u)\circ A_i\|_{\hpt}^p \\
&\leq  
C_\# \|(1_{I_i} |DT_\om^{(n)}|^{-1} u)\circ A_i\|_{p}^p+ C_\#\|D_t\big( (1_{I_i} |DT_\om^{(n)}|^{-1} u)\circ A_i \big)\|_{p}^p\\
& \quad = C_\# |A_i|^{-1}\|1_{I_i} |DT_\om^{(n)}|^{-1} u\|^p_p + C_\# |A_i|^{pt-1}\|D_t(1_{I_i} |DT_\om^{(n)}|^{-1} u)\|_{p}^p\\
& \quad \leq C_\# |A_i|^{-1}\| D\xi_i\|^p_{\infty} \|  u\|^p_p + C_\# |A_i|^{p+pt-1}\Big\| \frac{D\xi_i}{A_i} \Big\|^p_{\al} \|u\|_{\hpt}^p.
\end{align*}

Using conditions \eqref{it:M2} and \eqref{it:M3}
and a standard distortion estimate (see e.g. \cite{ManeETDD}), we get that there exists some constant $C_\mc{T}$ such that $\Big\| \frac{D\xi_i}{A_i} \Big\|^p_{\al} \leq C_{\mc{T}}$ for all $n, \om$. Then, by the choice of $A_i$, we get that
\begin{align*}
&\|(1_{I_i} |DT_\om^{(n)}|^{-1} u)\circ A_i\|_{\hpt}^p \\
&\quad \leq C_\# \||DT_\om^{(n)}|^{-1}\|^{p-1}_{\infty} \| u\|^p_p + C_\# C_{\mc{T}}\||DT_\om^{(n)}|^{-1}\|^{p+pt-1}_{\infty} \|u\|_{\hpt}^p.
\end{align*}
Also, there exists $K=K(\mc{R})>0$ such that $\sup_{i, \om, n} \max_{x,x'\in I_{i,\om}^{(n)}} |D\xi_{i,\om}^{(n)}(x)^{-1}D\xi_{i,\om}^{(n)}(x')|<K$. So, in  particular, $|D\xi_i(x)^{-1}A_i|\leq K$ and $|A_i^{-1}D\xi_i(x)|\leq K$ for every $x\in I_i$.
It follows directly from \cite[Lemma 3.7]{GTQuas} 
that if $\phi:\R\circlearrowleft$ is a diffeomorphism such that $\|D\phi\|_\infty, \|D\phi^{-1}\|_\infty\leq K$, then there exists a constant $C_K$ such that for every $u\in \hpt$ we have that
$\|u\circ \phi\|_{\hpt} \leq C_K \|u\|_{\hpt}$.

Combining with the previous estimate we get that
\begin{align*}
&\|(1_{I_i} |DT_\om^{(n)}|^{-1} u)\circ \xi_i\|_{\hpt}^p \\
&\quad \leq C_{\mc{T}} \||DT_\om^{(n)}|^{-1}\|_{\infty}^{p-1}\| u\|^p_p + C_{\mc{T}} \||DT_\om^{(n)}|^{-1}\|_{\infty}^{p+pt-1}\|u\|_{\hpt}^p. \quad \qed
\end{align*}

\subsection{Proof of Claim~\ref{it:boundPFhpt.2}}\label{S:pfboundPFhpt.2}
The first claim follows from \cite[Theorem 2.4.7]{Triebel}.
For the second claim, we first observe that 
\begin{align*}
  \|\eta_{m,r} u\|^p_{\hpt}&=\|(u\circ R_{r^{-1}} \cdot \eta_m) \circ R_r\|^p_{\hpt} \\
&\leq C_\# \Big(\|(u\circ R_{r^{-1}} \cdot \eta_m) \circ R_r\|_{p} + \|D_t((u\circ R_{r^{-1}} \cdot \eta_m) \circ R_r)\|_{p} \Big)^p \\
&\leq C_\# \Big(r^{-1}\|u\circ R_{r^{-1}} \cdot \eta_m\|^p_{p} + r^{-1+pt}\|D_t(u\circ R_{r^{-1}} \cdot \eta_m)\|_{p}^p \Big).
\end{align*}
  
Combining with the first part, we get
\begin{align*}
  \sum_{m\in \Z} \|\eta_{m,r} u\|^p_{\hpt} &\leq 
C_\# \Big(r^{-1} \sum_{m\in \Z}\|u\circ R_{r^{-1}} \cdot \eta_m\|^p_{p} + r^{-1+pt}\sum_{m\in \Z}\|u\circ R_{r^{-1}} \cdot \eta_m\|_{\hpt}^p \Big)\\
&\leq C_\# \Big( \|u \|_p^p +  r^{-1+pt} (\|u \circ R_{r^{-1}}\|_p^p + \|D_t(u\circ R_{r^{-1}})\|_{p}^p) \Big) \\
&\leq C_\# \Big( (1+r^{pt})\|u\|_p^p + \|u\|_{\hpt}^p \Big). \quad \qed
\end{align*}

\subsection*{Acknowledgments}
The authors would like to acknowledge enlightening conversations with Ben Goldys, which led to a convenient approximation scheme used in \S\ref{sec:Ex}, as well as useful discussions with Michael Cowling and Bill McLean, and bibliographical suggestions of Hans Triebel. The research of GF and CGT is supported by an ARC Future Fellowship and an ARC Discovery Project (DP110100068).

\bibliography{RandomUlam_refs,SemiInvOsel_refs}
\bibliographystyle{alpha}
\end{document}